\documentclass[11pt]{amsart}
\usepackage{latexsym,amscd,amssymb, graphicx, color, amsthm, bm, amsmath, cancel, enumitem,young,chessboard}  
\usepackage{tikz}
\usepackage[margin=1in]{geometry}
\usepackage{hyperref}
\usepackage[all,cmtip]{xy}

\numberwithin{equation}{section}

\newtheorem{theorem}{Theorem}[section]
\newtheorem{proposition}[theorem]{Proposition}
\newtheorem{corollary}[theorem]{Corollary}
\newtheorem{lemma}[theorem]{Lemma}
\newtheorem{conjecture}[theorem]{Conjecture}
\newtheorem{observation}[theorem]{Observation}

\newtheorem{remark}[theorem]{Remark}
\newtheorem{definition}[theorem]{Definition}
\newtheorem*{fields1}{Fields Conjecture 1}
\newtheorem*{fields2}{Fields Conjecture 2}
\newtheorem*{fields3}{Fields Conjecture 3}

\newtheorem{mainthm}{Theorem}

\theoremstyle{definition}
\newtheorem{defn}[theorem]{Definition}

\newcommand{\Hilb}{{\mathrm{Hilb}}}

\newcommand{\tttt}{{\mathfrak{t}}}

\newcommand{\symm}{{\mathfrak{S}}}

\newcommand{\stair}{{\mathrm{st}}}

\newcommand{\Fl}{{\mathrm{Fl}}}

\newcommand{\zero}{{\mathbf{0}}}

\newcommand{\OP}{{\mathcal{OP}}}
\newcommand{\OMP}{{\mathcal{OMP}}}

\newcommand{\EEE}{{\mathcal{E}}}

\newcommand{\sign}{{\mathrm{sign}}}
\newcommand{\Stir}{{\mathrm{Stir}}}

\newcommand{\Ind}{{\mathrm{Ind}}}

\newcommand{\DDD}{{\mathfrak{D}}}

\newcommand{\AAA}{{\mathcal{A}}}
\newcommand{\BBB}{{\mathcal{B}}}

\newcommand{\HHH}{{\mathcal{H}}}

\newcommand{\LLL}{{\mathcal{L}}}

\newcommand{\III}{{\mathcal{I}}}

\newcommand{\PPP}{{\mathfrak{P}}}

\newcommand{\FF}{{\mathbb{F}}}
\newcommand{\CC}{{\mathbb{C}}}
\newcommand{\QQ}{{\mathbb{Q}}}
\newcommand{\RR}{{\mathbb{R}}}
\newcommand{\TT}{{\mathbb{T}}}
\newcommand{\ZZ}{{\mathbb{Z}}}
\newcommand{\WWW}{{\mathbb{W}}}

\newcommand{\xx}{{\mathbf{x}}}
\newcommand{\yy}{{\mathbf{y}}}

\newcommand{\sss}{{\mathbf{s}}}

\newcommand{\ann}{{\mathrm{ann}}}
\newcommand{\Frob}{{\mathrm{Frob}}}
\newcommand{\grFrob}{{\mathrm{grFrob}}}
\newcommand{\Top}{{\mathrm{top}}}

\newcommand{\Res}{{\mathrm{Res}}}
\newcommand{\Ker}{{\mathrm{Ker}}}

\newcommand{\inv}{{\mathrm{inv}}}
\newcommand{\maj}{{\mathrm{maj}}}
\newcommand{\dinv}{{\mathrm{dinv}}}
\newcommand{\minimaj}{{\mathrm{minimaj}}}
\newcommand{\des}{{\mathrm{des}}}
\newcommand{\SYT}{{\mathrm{SYT}}}

\newcommand{\Gale}{{\mathrm{Gale}}}

%--------------------------------------------------

\begin{document}

\title[A proof of the Fields Conjectures]
{A proof of the Fields Conjectures}

\author[Satoshi Murai]{Satoshi Murai}
\address{Waseda University}
\email{s-murai@waseda.jp}

\author[Brendon Rhoades]{Brendon Rhoades}
\address{University of California, San Diego}
\email{bprhoades@ucsd.edu}

\author[Andy Wilson]{Andy Wilson}
\address{Kennesaw State University}
\email{awils342@kennesaw.edu}

\begin{abstract}
    The {\em superspace ring} of rank $n$ is the algebra $\Omega_n$ of differential forms on affine $n$-space. The algebra $\Omega_n$ is bigraded with respect to polynomial and exterior degree and carries a natural action of the symmetric group $\symm_n$. Modding out by $\symm_n$-invariants with vanishing constant term yields the {\em superspace coinvariant ring} $SR_n$. We prove that, as an ungraded $\symm_n$-module, the space $SR_n$ is isomorphic to the sign-twisted permutation action of $\symm_n$ on ordered set partitions of $\{1,\dots,n\}$. We refine this result by calculating the bigraded $\symm_n$-isomorphism type of $SR_n$. This proves the Fields Conjectures of N. Bergeron, L. Colmenarejo, S.-X. Li, J. Machacek, R. Sulzgruber, and M. Zabrocki as well as a related conjecture of V. Reiner.
    %\satoshi{N. Bergeron $\to$ Bergeron? since others do not have the initial?} \brendon{This is to distinguish Nantel and Francois Bergeron. Maybe have first initials for everyone?}
    %\satoshi{I understand, but we probably should be consistent? I am fine to either put initials to everyone or not to put initials to everyone} \brendon{OK.}
\end{abstract}

\maketitle

\section{Introduction}
\label{sec:Introduction}

Let $\FF$ be a field of characteristic 0, let $\xx_n = (x_1,\dots,x_n)$ be a list of $n$ variables, and write $\FF[\xx_n] := \FF[x_1,\dots,x_n]$ for the polynomial ring with its usual grading. The symmetric group $\symm_n$ acts on $\FF[\xx_n]$ by subscript permutation and elements of the invariant subalgebra
\begin{equation}
    \FF[\xx_n]^{\symm_n} := \{ f \in \FF[\xx_n] \,:\, w \cdot f = f \text{ for all $w \in \symm_n$} \}
\end{equation}
are called {\em symmetric polynomials}. For $d > 0$ let $e_d \in \FF[\xx_n]^{\symm_n}$ be the {\em elementary symmetric polynomial}
$e_d := \sum_{1 \leq i_1 < \cdots < i_d \leq n} x_{i_1} \cdots x_{i_d}$
of degree $d$. 

Write $\FF[\xx_n]^{\symm_n}_+ \subseteq \FF[\xx_n]$ for the vector space of symmetric polynomials with vanishing constant term. The type A {\em coinvariant ideal} is
\begin{equation}
    I_n := (\FF[\xx_n]^{\symm_n}_+) = (e_1,\dots,e_n) \subseteq \FF[\xx_n]
\end{equation}
and the type A {\em coinvariant ring} is the corresponding quotient
\begin{equation}
    R_n := \FF[\xx_n]/I_n
\end{equation}
so that $R_n$ is both a graded ring and a graded $\symm_n$-module.

The module $R_n$ is among the most important representations in algebraic combinatorics, with algebraic properties governed by combinatorial properties of $\symm_n$. E. Artin proved \cite{Artin} that the set 
\begin{equation}
\label{eq:intro-artin-monomials}
    \{ x_1^{a_1} \cdots x_n^{a_n} \,:\, a_i < i \}
\end{equation}
of sub-staircase monomials descends to a vector space basis of $R_n$. Chevalley showed \cite{Chevalley} that 
\begin{equation}
\label{eq:chevalley-intro}
    R_n \cong_{\symm_n} \FF[\symm_n]
\end{equation}
as ungraded $\symm_n$-modules, so that $R_n$ is a graded refinement of the regular representation. The graded character of $R_n$ was calculated by Lusztig--Stanley \cite{Stanley} using standard Young tableaux.

The {\em superspace ring} of rank $n$ is the tensor product
\begin{equation}
    \Omega_n := \FF[x_1,\dots,x_n] \otimes_\FF \wedge \{ \theta_1,\dots,\theta_n\}
\end{equation}
of a rank $n$ symmetric algebra and a rank $n$ exterior algebra. The terminology arises from supersymmetry in physics, where commuting $x$-variables index bosons and anticommuting $\theta$-variables index fermions. The algebra $\Omega_n$ is bigraded; we call the $x$-grading  {\em bosonic} and the $\theta$-grading  {\em fermionic}. We may regard $\Omega_n$ as the algebra of regular differential forms on the affine space $\FF^n$. As such, it is equipped with an {\em Euler operator} 
\begin{equation}
    d: \Omega_n \longrightarrow \Omega_n, \quad \quad d f := \sum_{i=1}^n \frac{\partial f}{\partial x_i} \theta_i
\end{equation}
where the partial derivative $\partial/\partial x_i$ regards $\theta$-variables as constants.

The symmetric group $\symm_n$ acts on $\Omega_n$ diagonally, viz.
\begin{equation}
    w \cdot x_i = x_{w(i)}, \quad w \cdot \theta_i = \theta_{w(i)} \quad \quad \text{for $w \in \symm_n$ and $1 \leq i \leq n.$}
\end{equation}
Let $(\Omega_n)^{\symm_n}_+ \subseteq \Omega_n$ be the subspace of $\symm_n$-invariants with vanishing constant term. The {\em superspace coinvariant ideal} is the (two-sided) bihomogeneous ideal 
\begin{equation}
\label{eq:solomon-generators}
    SI_n := ((\Omega_n)^{\symm_n}_+) = (e_1,\dots,e_n,de_1,\dots,de_n) \subseteq \Omega_n
\end{equation}
where  the second equality is justified by a result of Solomon \cite{Solomon}. The {\em superspace coinvariant ring} is the bigraded $\symm_n$-module
\begin{equation}
    SR_n := \Omega_n / SI_n.
\end{equation}
Let $\mathrm{Mat}_n(\CC)$ be the affine space of $n \times n$ complex matrices, $\tttt \subseteq \mathrm{Mat}_n(\CC)$ be the diagonal matrices, and $\mathcal{N} \subseteq \mathrm{Mat}_n(\CC)$ be the nilpotent matrices. Thanks to \eqref{eq:solomon-generators}, when $\FF = \CC$ we may regard $SR_n$ as the algebra of K\"ahler differential forms on the scheme-theoretic intersection $\mathcal{N} \cap \tttt$.

The study of $SR_n$ was initiated by the Fields Institute Combinatorics Group\footnote{Nantel Bergeron, Laura Colmenarejo, Shu Xiao Li, John Machacek, Robin Sulzgruber, and Mike Zabrocki} in the late 2010's; see \cite{Zabrocki}. It was predicted that algebraic properties  of $SR_n$ are governed by combinatorial properties of the family $\OP_n$ of ordered set partitions of $[n] := \{1,\dots,n\}$. Deferring various definitions to Section~\ref{sec:Background}, the Fields Conjectures may be stated as follows. 
%\andy{There are a couple instances in this section where the year given differs from the citation's appearance year, e.g.\ 2018 vs.\ 2019 for Zabrocki's preprint above and 2023 vs.\ 2024 for Rhoades and Wilson below. I don't really care what we do, but we should probably be consistent.} \brendon{This is a tricky point. The Fields Group defined and made their conjectures about $SR_n$ in 2018. Zabrocki's preprint is the only place where they were written down, and this was in (early) 2019. The 2023 vs. 2024 difference comes down to the publication delay with Forum Pi. I'm not sure what the proper convention is here. I wrote `late 2010s' for the Fields Conjectures.  I used `several years after their introduction' for our joint work.}
%\andy{This all makes sense, thanks. I changed the language on our results to ``recently,'' but feel free to change it back!}

\begin{fields1}
    \label{conj:fields1} 
 The bigraded Hilbert series of $SR_n$ is 
    \begin{equation}
    \label{eq:hilbert-fields-intro}
        \Hilb(SR_n;q,z) = \sum_{k=1}^n z^{n-k} \cdot [k]!_q \cdot \Stir_q(n,k)
    \end{equation}
    where $\Stir_q(n,k)$ is a $q$-Stirling number, $q$ tracks bosonic degree, and $z$-tracks fermionic degree. In particular, we have $\dim_\FF SR_n = \# \OP_n$.
\end{fields1}

\begin{fields2}
We have an isomorphism of ungraded $\symm_n$-modules 
    \begin{equation}
        \label{eq:ungraded-fields-intro}
        SR_n \cong_{\symm_n} \FF[\OP_n] \otimes_\FF \sign
    \end{equation}
    where $\sign$ is the 1-dimensional sign representation of $\symm_n$.
\end{fields2}

\begin{fields3}
The bigraded Frobenius characteristic of $SR_n$ is given by 
    \begin{equation}
        \label{eq:graded-fields-intro}
        \grFrob(SR_n;q,z) = \sum_{k = 1}^n z^{n-k} \cdot C_{n,k}(\xx;q)
    \end{equation}
    where $C_{n,k}(\xx;q) = \Delta'_{e_{k-1}} e_n \mid_{t \to 0}$.
\end{fields3}

The expression $\Delta'_{e_{k-1}} e_n \mid_{t \to 0}$ involving the Macdonald eigenoperator $\Delta'_{e_{k-1}}$, together with other formulations of $C_{n,k}(\xx;q)$, will be recalled in Section~\ref{sec:Background}.

The Fields Conjectures inspired a flurry of research on $SR_n$  \cite{SS,Swanson,SW,SW2} and variants thereof \cite{Bergeron,KR,Lentfer,Lentfer2,RTW,RW2,RW3} but resisted proof. The inscrutable Gr\"obner theory of $SR_n$ rendered the Fields Conjectures impervious to direct attack. Recently, Rhoades and Wilson \cite{RW} used an algebraic argument to establish Fields Conjecture 1 and proved an `Operator Theorem' characterizing the inverse system $SI_n^\perp \subseteq \Omega_n$.  Angarone--Commins--Karn--Murai--Rhoades \cite{ACKMR} used  {\em Solomon--Terao algebras} (a recent invention \cite{AMMN} in the theory of hyperplane arrangements) to produce an explicit monomial basis of $SR_n$ generalizing Artin's basis of $R_n$ in \eqref{eq:intro-artin-monomials}. However, the methods of \cite{ACKMR,RW} did not respect the  $\symm_n$-action on $SR_n$, and the module structure of $SR_n$ predicted by the Fields Conjectures 2 and 3 remained open. Our main result is as follows.

\begin{mainthm}
\label{thm:intro-theoremA}
    The Fields Conjectures are true.
\end{mainthm}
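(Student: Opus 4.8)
The strategy I would pursue is to leverage what is already known and reduce everything to a single graded $\symm_n$-equivariant statement. Fields Conjecture 1 (the bigraded Hilbert series) is already established by Rhoades--Wilson \cite{RW}, and Fields Conjecture 2 is the ungraded specialization of Fields Conjecture 3, so the real target is Fields Conjecture 3: the identity $\grFrob(SR_n;q,z) = \sum_{k=1}^n z^{n-k} C_{n,k}(\xx;q)$. Since the dimension count is already known, it suffices to produce, for each fermionic degree $j = n-k$, a graded $\symm_n$-module isomorphism between the fermionic-degree-$j$ component $(SR_n)_j$ of $SR_n$ and a module whose graded Frobenius characteristic is $C_{n,k}(\xx;q)$. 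The natural candidate on the other side is a module already appearing in the Delta Conjecture literature — for instance the type of graded ring (generalized coinvariant algebra, or a Garsia--Procesi-style module) known to carry graded Frobenius $C_{n,k}(\xx;q)$. So the plan has two movements: (i) understand $(SR_n)_j$ as a graded $\symm_n$-module in its own right, and (ii) match it with the known combinatorial module.

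For movement (i), I would exploit the Euler operator $d$ and the exterior algebra structure. Multiplication by $d e_1, \dots, d e_n$ and the differential $d$ itself give maps between adjacent fermionic components, so $SR_n = \bigoplus_j (SR_n)_j$ is not just a direct sum of graded vector spaces but is tied together by a Koszul-type complex built from $de_1,\dots,de_n$ over the purely bosonic quotient $\FF[\xx_n]/(e_1,\dots,e_n) = R_n$. Concretely, I would try to show that $(SR_n)_j$ is the $\symm_n$-equivariant "cokernel" of the relevant stage of this complex, and then compute its character via an Euler-characteristic (inclusion-exclusion) argument over $R_n$ — whose graded character is the classical Lusztig--Stanley formula. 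Here the sign twist in Fields Conjecture 2 should emerge naturally: each $\theta_i$ transforms in the reflection representation, so the top fermionic slots contribute $\wedge^j$ of the standard representation, and the alternating sum assembles into the $\sign$-twisted structure. The Solomon--Terao/Angarone--Commins--Karn--Murai--Rhoades monomial basis \cite{ACKMR} can be used to make the dimension bookkeeping rigorous and to guarantee the complex has the expected rank at each spot.

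For movement (ii), once $(SR_n)_j$ is expressed as such an Euler characteristic, I would compare it term-by-term with the known symmetric-function identities for $C_{n,k}(\xx;q)$ — in particular the expansion of $\Delta'_{e_{k-1}} e_n \mid_{t\to 0}$ in terms of the $q$-Stirling/ordered-set-partition statistics recalled in Section~\ref{sec:Background} — and check the two generating functions agree. The main obstacle I anticipate is precisely the step of proving that the Koszul-type complex on $de_1,\dots,de_n$ over $R_n$ has no unexpected homology in intermediate degrees, i.e. that $SR_n$ has the "expected" resolution; the Gröbner-theoretic intractability flagged in the introduction is exactly the difficulty of controlling these intermediate syzygies $\symm_n$-equivariantly. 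My fallback, if the homological vanishing proves too delicate globally, would be to argue degree-by-degree in fermionic degree using the Operator Theorem of \cite{RW} characterizing $SI_n^\perp$, transporting the $\symm_n$-action through the harmonic (inverse system) model where the top fermionic component is visibly a sign-twist of a concrete span of Vandermonde-like determinants, and then inducting downward with the operators $\partial/\partial\theta_i$.
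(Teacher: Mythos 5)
Your reduction to Fields Conjecture~3 is sound (Fields Conjecture~2 is indeed the $q=1$ ungraded specialization, and Fields Conjecture~1 is already in \cite{RW}), but the main plan --- computing $(SR_n)_{*,j}$ as the cohomology of a Koszul-type complex on $de_1,\dots,de_n$ over $R_n$ --- runs into precisely the wall you flag at the end: proving $\symm_n$-equivariant exactness of that complex in intermediate fermionic degrees. This is not a technical nuisance but the central obstruction; it is essentially equivalent to knowing the $\symm_n$-equivariant first syzygies of $SI_n$, which is at least as hard as the problem you are trying to solve, and the Gr\"obner theory that would normally control such a thing is explicitly intractable here. Your Euler-characteristic argument, even if the complex were exact, would only pin down the alternating sum of the $\Frob((SR_n)_{*,j})$'s, not each one individually, so you would still need some additional input to separate the fermionic pieces. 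A small additional inaccuracy: the $\theta_i$ span the degree-$n$ permutation representation (not the reflection representation); only after modding out by $de_1 = \theta_1 + \cdots + \theta_n$ does the reflection representation appear.

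The paper's actual argument avoids homology entirely. The missing key idea in your plan is the \emph{parabolic antisymmetrization} technique: for every $\mu \vdash n$, one computes $\dim_\FF \varepsilon_\mu \cdot SR_n$ exactly, which by Lemma~\ref{lem:module-antisymmetrization} (the identity $e_\mu^\perp \Frob(V) = \dim_\FF \varepsilon_\mu \cdot V$) and the nondegeneracy of the Hall pairing pins down $\grFrob(SR_n;q,z)$ uniquely. The upper bound on $\dim_\FF \varepsilon_\mu \cdot SR_n$ comes from antisymmetrizing the monomial basis $\AAA_n$ of \cite{ACKMR} (Lemma~\ref{lem:parabolic-spanning}); the lower bound --- the technical heart --- passes to $SH_n = SI_n^\perp$ as your fallback suggests, but instead of inducting downward in fermionic degree it builds, for every $\mu$-translation sequence $\TT$, an implicitly defined operator $\DDD^\TT_\mu$ (Definition~\ref{def:D-operators}) so that $\DDD^\TT_\mu(\delta_n) \in \varepsilon_\mu \cdot SH_n$ has a controlled Gale-maximal fermionic leading term; Theorem~\ref{thm:colon-ideal-basis} on colon-ideal bases then guarantees that these leading coefficients are linearly independent (Lemmas~\ref{lem:abstract-leading}--\ref{lem:parabolic-dimension-equality}). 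Your fallback instinct is pointing in the right direction --- the inverse system and the one easy direction of the Operator Theorem are exactly the tools used --- but without the parabolic $\varepsilon_\mu$ reduction, the $\DDD^\TT_\mu$ operators, and the Solomon--Terao colon-ideal bases, there is no route to making the fermionic-degree-by-degree matching rigorous.
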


As mentioned above, Fields Conjecture 1 was proven in previous work of Rhoades--Wilson \cite{RW}. Fields Conjecture 2 is Theorem~\ref{thm:ungraded-fields} while Fields Conjecture 3 is Theorem~\ref{thm:fields}. If we write $(SR_n)_{*,n-k}$ for the piece of $SR_n$ of fermionic degree $n-k$ and $\OP_{n,k}$ for the ordered set partitions of $[n]$ with $k$ blocks,  Fields Conjecture 3 implies
\begin{equation}
    (SR_n)_{*,n-k} \cong \FF[\OP_{n,k}] \otimes\sign
\end{equation}
as ungraded $\symm_n$-modules; see Corollary~\ref{cor:fermionic-piece}. We prove a conjecture of Reiner \cite{Reiner} which connects the restriction of $(SR_n)_{*,n-k}$ from $\symm_n$ to $\symm_{n-1}$ with the natural recursion on $\OP_{n,k}$ which erases $n$ (Corollary~\ref{cor:reiner}).

Borel proved \cite{Borel} that the cohomology of the variety $\Fl_n$ of complete flags in $\CC^n$ may be presented as $H^*(\Fl_n) = R_n$. One can ask for a geometric interpretation of $SR_n$; we give results in this direction. We use work of Reeder \cite{Reeder} to give a geometric interpretation of $SR_n$ in terms of the Springer resolution which extends to any Weyl group; see Proposition~\ref{prop:SR-geometric}. This leads to an interpretation of $SR_n$ as a geometric action on the top cohomology of certain fibers of the Springer morphism; see Corollary~\ref{cor:fiber}.
%\satoshi{I somehow feels that the last sentence suddenly appears. We plan to move this to section 6 so we probably change here?}
%\brendon{Does this look better?}

Replacing the superspace ring $\Omega_n$ by the polynomial ring $\FF[\xx_n,\yy_n]$ in $2n$ commuting variables $\xx_n = (x_1,\dots,x_n), \yy_n = (y_1,\dots,y_n)$, one still has a diagonal action of $\symm_n$, viz.
\begin{equation}
    w \cdot x_i := x_{w(i)}, \quad w \cdot y_i := y_{w(i)} \quad \quad \text{for $w \in \symm_n$ and $1 \leq i \leq n$.}
\end{equation}
Let $\FF[\xx_n,\yy_n]^{\symm_n}_+ \subseteq \FF[\xx_n,\yy_n]$ be the space of $\symm_n$-invariants with vanishing constant term. Setting $DI_n := (\FF[\xx_n,\yy_n]^{\symm_n}_+) \subseteq \FF[\xx_n,\yy_n]$, the study of the {\em diagonal coinvariant ring} $DR_n := \FF[\xx_n,\yy_n]/DI_n$ was initiated by Garsia and Haiman in the early 1990s; see \cite{HaimanQuotient}. Haiman used \cite{Haiman} the algebraic geometry of Hilbert schemes to calculate the bigraded character of $DR_n$ in terms of the $\nabla$-operator on symmetric functions. In 2018, Carlsson and Oblomkov \cite{CO} found an explicit monomial basis of $DR_n$ using the geometry of affine Springer fibers. Together with the basis result \cite{ACKMR} of Angarone et.\ al., Theorem~\ref{thm:intro-theoremA} gives the superspace analog of these results.

Our Theorem~\ref{thm:intro-theoremA} strategy is as follows. For a partition $\mu \vdash n$, let $\symm_\mu \subseteq \symm_n$ be the associated parabolic subgroup and let $\varepsilon_\mu \in \FF[\symm_n]$ be the group algebra element which antisymmetrizes over $\symm_\mu$. We calculate the dimensions of $\varepsilon_\mu \cdot SR_n$ for all $\mu \vdash n$; this is enough to pin down the character of $SR_n$. Using the basis result of Angarone et.\ al.\ \cite{ACKMR}, it is not difficult to  bound $\varepsilon_\mu \cdot SR_n$ from above. For the lower bound we switch to the inverse system $SH_n = SI_n^\perp$ to find sufficiently many linearly independent elements of $\varepsilon_\mu \cdot SH_n$. This requires more ingenuity. We give implicitly defined superspace operators $\DDD^\TT_\mu: \Omega_n \to \Omega_n$ depending on a `$\mu$-translation sequence' $\TT$. The application $\DDD^\TT_\mu(\delta_n)$ to the Vandermonde determinant $\delta_n \in \FF[\xx_n]$ results in elements of $\varepsilon_\mu \cdot SH_n$ whose fermionic leading terms have favorable properties. The various $\DDD^\TT_\mu(\delta_n)$ are used to bound $\varepsilon_\mu \cdot SH_n$ from below.

The approach in the previous paragraph bears some resemblance to the calculation of the bigraded Hilbert series of $SR_n$ performed in \cite{RW}. In that paper, the authors also study the harmonic space $SH_n$ and introduce $\DDD$-operators to bound $SH_n$ from below. Two main developments allow us to enhance this approach and find the  module structure of $SR_n$.
\begin{enumerate}
    \item The $\DDD$-operators used to bound $SH_n$ from below in the work of Rhoades--Wilson \cite[Sec. 4]{RW} have a complicated explicit definition related to flagged skew Schur polynomials. The intricacy of these operators made it unclear how to find a parabolic generalization. In contrast, we define our operators $\DDD^\TT_\mu$ {\em implicitly} using linear algebra. With this new paradigm, the relevant computations dramatically simplify and allow for parabolic generalization.
    \item Angarone--Commins--Karn--Murai--Rhoades \cite{ACKMR} exhibited a monomial basis $\AAA_n(J)$ of certain quotients $\FF[\xx_n]/(I_n:f_J)$ of the polynomial ring by colon ideals; see Theorem~\ref{thm:colon-ideal-basis}.  These bases will be used to understand the leading terms of $\DDD^\TT_\mu(\delta_n)$ and prove the desired lower bound on $\varepsilon_\mu \cdot SH_n$.

\end{enumerate}
The bases $\AAA_n(J)$ in (2) above were established via the recently defined {\em Solomon--Terao algebras} associated to hyperplane arrangements \cite{AMMN}. These algebras have also been used to present the cohomology of Hessenberg varieties \cite{AHMMS}. The Fields Conjectures are another application of these powerful algebras.

The rest of the paper is organized as follows. In {\bf Section~\ref{sec:Background}} we give background material on combinatorics, commutative algebra, the representation theory of $\symm_n$, and coinvariant theory. The short and expository {\bf Section~\ref{sec:RH}} surveys the most important results of \cite{ACKMR,RW} which will play a role in our proofs. {\bf Section~\ref{sec:Parabolic}} is the technical heart of the paper and culminates in an explicit basis (Lemma~\ref{lem:parabolic-dimension-equality}) of the bigraded vector space $\varepsilon_\mu \cdot SR_n$. In {\bf Section~\ref{sec:Main}} we prove Theorem~\ref{thm:intro-theoremA} and Reiner's Conjecture. In {\bf Section~\ref{sec:Springer}} we  relate $SR_n$ to Springer theory. In {\bf Section~\ref{sec:Conclusion}} we present a conjecture regarding other types.

\section{Background}
\label{sec:Background}

\subsection{Combinatorics}
Suppose $I = \{i_1 < \cdots < i_k \}$ and $J = \{j_1 < \cdots < j_k\}$ are two subsets of $[n]$ of the same size. The {\em Gale order} is defined by
\begin{equation}
    I \leq_\Gale J \quad \text{if and only if} \quad i_r \leq j_r \text{ for } r = 1,\dots,k.
\end{equation}

Let $n \geq 0$. A {\em partition} of $n$ is a sequence $\lambda = (\lambda_1 \geq \cdots \geq \lambda_p)$ of positive integers which sum to $n$. We write $\lambda \vdash n$ to indicate that $\lambda$ is a partition of $n$. We identify a partition $\lambda$ with its {\em (English) Young diagram} consisting of $\lambda_i$ left-justified boxes in row $i$. The Young diagram of $(4,2,2) \vdash 8$ is shown below. If $\lambda,\mu$ are two partitions, we write $\mu \subseteq \lambda$ for containment of their Young diagrams. The containment partial order on partitions $\lambda \subseteq (k^{n-k})$ is isomorphic to the Gale order on subsets of $[n]$ of size $k$. If $\lambda \vdash n$, we write $\lambda' \vdash n$ for the {\em conjugate} partition obtained by reflecting $\lambda$ across its main diagonal. For example, we have $(4,2,2)' = (3,3,1,1)$.

\begin{footnotesize}
    \[ \begin{young} & & & \cr & \cr & \cr\end{young}\]
\end{footnotesize}

If $\lambda \vdash n$ is a partition, a {\em $\lambda$-tableau} is a filling $T: \lambda \to \ZZ_{> 0}$ of the Young diagram of $\lambda$ with positive integers. We write $\lambda(T) := \lambda$ for the {\em shape} of $T$. A $\lambda$-tableau $T$ is {\em semistandard} if it is weakly increasing across rows and strictly increasing down columns. 
A semistandard tableau $T$ of shape $\lambda(T) = (4,2,2)$ is shown on the left below.

\begin{footnotesize}
    \[
    \begin{young}
        1 & 1 & 1 & 3 \cr
        2 & 3 \cr
        4 & 4
    \end{young}  \quad \quad \quad \quad
    \begin{young}
        1& 2& 4 & 8 \cr
        3& 6 \cr
        5& 7 \cr
    \end{young}
    \]
\end{footnotesize}

A semistandard tableau $T$ is {\em standard} if it consists of the numbers $1,2,\dots, n$, each occurring exactly once. A standard tableau of shape $(4,2,2)$ is shown on the right above.  If $T$ is a standard tableau with $n$ boxes, a number $1 \leq i \leq n-1$ is a {\em descent} of $T$ if $i$ appears in a strictly higher row than $i+1$ in $T$. The {\em descent number} $\des(T)$ is the number of descents of $T$ and the {\em major index} $\maj(T)$ is the sum of the descents of $T$.  The above standard tableau $T$ has descents $2,4,$ and $6$, so  $\des(T) = 3$ and $\maj(T) = 2+4+6=12$.

Write $\Lambda = \bigoplus_{n \geq 0} \Lambda_n$ for the ring of symmetric functions in an infinite variable set $\xx = (x_1,x_2,\dots)$ over a field containing $\QQ$ and the parameters $q,t,z$. We refer the reader to Haglund's book \cite{Haglund} for undefined symmetric function terminology.

Bases of $\Lambda_n$ are indexed by partitions $\lambda \vdash n$. We will make use of the {\em elementary basis} $\{e_\lambda\}$ and the {\em Schur basis} $\{s_\lambda\}$. If $\lambda = (\lambda_1,\dots,\lambda_p)$ is a partition, we have $e_\lambda := e_{\lambda_1} \cdots e_{\lambda_p}$ where $e_d := \sum_{1 \leq i_1 < \cdots < i_d} x_{i_1} \cdots x_{i_d}$ for all $d > 0$ and $e_0 := 1$. The Schur function is $s_\lambda := \sum_T \xx^T$ where the sum is over semistandard tableaux $T$ of shape $\lambda$ and $\xx^T = x_1^{c_1} x_2^{c_2} \cdots$ where $T$ has $c_i$ copies of $i$.

The bialternant formulation of the Schur basis will prove convenient. Let $N > 0$ and write $\FF[\symm_N]$ for the group algebra of $\symm_N$. This algebra acts on $\FF[\xx_N] = \FF[x_1,\dots,x_N]$. Let $\varepsilon_N := \sum_{w \in \symm_N} \sign(w) \cdot w \in \FF[\symm_N]$ be the antisymmetrizer. If $\lambda = (\lambda_1 \geq \cdots \geq \lambda_N)$ is a partition with $\leq N$ parts, one has 
\begin{equation}
\label{eq:bialternant-formula}
    s_\lambda(x_1,\dots,x_N) = \frac{\varepsilon_N \cdot (x_1^{\lambda_1 + N - 1} x_2^{\lambda_2 + N - 2} \cdots x_N^{\lambda_N})}{\varepsilon_N \cdot (x_1^{N-1} x_2^{N-2} \cdots x_N^0) }.
\end{equation}
The denominator of \eqref{eq:bialternant-formula} is the usual Vandermonde determinant
\begin{equation}
    \delta_N := \varepsilon_N \cdot (x_1^{N-1} x_2^{N-2} \cdots x_N^0) = \prod_{1 \leq i < j\leq N} (x_i  - x_j).
\end{equation}

The symmetric function $C_{n,k}(\xx;q) \in \Lambda_n$ arising in the Fields Conjecture 3 may be described in various ways. 
Recall the standard notation for $q$-numbers, $q$-factorials, $q$-binomial coefficients, and $q$-multinomial coefficients:
\begin{equation}
    [n]_q := \frac{q^n-1}{q-1} \quad [n]!_q := \prod_{i=1}^n[i]_q \quad {n \brack m}_q := \frac{[n]!_q}{[m]!_q \cdot [n-m]!_q} \quad {n \brack m_1,\dots,m_k}_q := \frac{[n]!_q}{[m_1]!_q \cdots [m_k]!_q}.
\end{equation}
We first give explicit expansions in symmetric function bases.
\begin{align}
    \label{eqn:C-classical}
    C_{n,k}(\xx;q) &:= \sum_{T \in \SYT(n)} q^{\maj(T) + {n-k \choose 2} - (n-k) \cdot \des(T)} {\des(T) \brack n-k}_q \cdot s_{\lambda(T)}  \\&= \sum_{\substack{\lambda \vdash n \\ \lambda'_1=k}} q^{\sum_i(i-1)(\lambda_i-1)} {k \brack m_1(\lambda),\dots,m_k(\lambda)}_q \cdot \omega \widetilde{H}_\lambda(\xx;q).
\end{align}
In the second line $\widetilde{H}_\lambda(\xx;q)$ is the {\em Hall--Littlewood function} attached to $\lambda$, we write $\omega: \Lambda \to \Lambda$ for the involution characterized by $\omega: s_\lambda \mapsto s_{\lambda'}$, and we let $m_i(\lambda)$ be the multiplicity of $i$ as a part of $\lambda$. See \cite[Thm. 6.14]{HRS} for more information.

An {\em ordered set partition} of $[n]$ is a list $\sigma = (B_1 \mid \cdots \mid B_k)$ of nonempty subsets of $[n]$ such that we have the disjoint union $[n] = B_1 \sqcup \cdots \sqcup B_k$. We write $\OP_n$ for the collection of ordered set partitions of $[n]$ and $\OP_{n,k}$ for the collection of ordered set partitions of $[n]$ with $k$ blocks. For example, we have
\[ ( 4, \, 5 \mid 2 \mid 1, \, 3, \, 6 ) \in \OP_{6,3}. \]Both $\OP_n$ and $\OP_{n,k}$ carry natural actions of the symmetric group $\symm_n$.

Let $\Stir(n,k)$ be the {\em Stirling number of the second kind} counting {\bf unordered} set partitions of $[n]$ into $k$ blocks. We have $\# \OP_{n,k} = k! \cdot \Stir(n,k)$ and $\# \OP_n = \sum_{k \geq 0} k! \cdot \Stir(n,k)$. The Hilbert series \eqref{eq:hilbert-fields-intro} in Fields Conjecture 1 involves a $q$-analog $\Stir_q(n,k)$ defined recursively by 
\begin{equation}
    \Stir_q(0,k) = \begin{cases}
        1 & k = 0, \\
        0 & k > 0,
    \end{cases} \quad \quad
    \Stir_q(n,k) = \Stir_q(n-1,k-1) + [k]_q \cdot \Stir_q(n-1,k).
\end{equation}
See \cite{Milne, Steingrimsson, WW} for combinatorial appearances of $q$-Stirling numbers.

An {\em ordered multiset partition} is a sequence $\mu = (S_1 \mid \cdots \mid S_k)$ of nonempty finite {\bf sets} of positive integers. We write $\OMP_{n,k}$ for the family of ordered multiset partitions  $\mu = (S_1 \mid \cdots \mid S_k)$ with $k$ blocks so that $\# S_1 + \cdots + \# S_k = n$. For example, we have
\[ (5  \mid 1, \, 2 , \, 4 \mid 1, \, 2 ) \in \OMP_{6,3}. \]
No repetition is allowed within the blocks of an ordered multiset partition. We write $\xx^\mu := x_1^{c_1} x_2^{c_2} \cdots $ where $c_i$ is the number of copies of $i$ in $\mu$; we have $\xx^\mu = x_1^2 x_2^2 x_4 x_5$ in the above example.
The symmetric function $C_{n,k}(\xx;q)$ can be described combinatorially using ordered multiset partitions. If $\mu$ is an ordered multiset partition, we have numbers $\inv(\mu),\maj(\mu),\dinv(\mu),$ and $\minimaj(\mu)$; see  \cite{Wilson} for their definitions. Combining results of \cite{HRW,Rhoades,Wilson} one has
\begin{multline}
    \label{eqn:C-combinatorial}
    C_{n,k}(\xx;q) = \\ \sum_{\mu \in \OMP_{n,k}} q^{\inv(\mu)} \xx^\mu = \sum_{\mu \in \OMP_{n,k}} q^{\maj(\mu)} \xx^\mu = 
    \sum_{\mu \in \OMP_{n,k}} q^{\dinv(\mu)} \xx^\mu = \sum_{\mu \in \OMP_{n,k}} q^{\minimaj(\mu)} \xx^\mu.
\end{multline}

There is a final way to write $C_{n,k}(\xx;q)$ in terms of Macdonald eigenoperators. For a partition $\lambda$, we write $\widetilde{H}_\lambda(\xx;q,t)$ for the modified Macdonald symmetric function. The set $\{ \widetilde{H}_\lambda \}$ forms a basis $\Lambda$. If $F \in \Lambda$ is any symmetric function, the {\em primed delta operator} is the Macdonald eigenoperator $\Delta'_F:\Lambda \to \Lambda$ characterized by
\begin{equation}
    \Delta'_F: \widetilde{H}_\lambda(\xx;q,t) \mapsto F(\dots,q^{(i-1)}t^{(j-1)},\dots) \times \widetilde{H}_\lambda(\xx;q,t)
\end{equation}
where the arguments of $F$ range over all matrix coordinates $(i,j) \neq (1,1)$ of boxes in $\lambda$. By results of \cite{GHRY, HRS2} (see also \cite{BHMPS, DM}) one has 
\begin{equation}
    \label{eqn:C-delta}
    C_{n,k}(\xx;q) = \Delta'_{e_{k-1}} e_n \mid_{t \to 0}.
\end{equation}

\subsection{(Super)Commutative algebra}
Recall that $\FF$ is a field of characteristic 0. If $V = \bigoplus_{i \geq 0} V_i$ is a graded $\FF$-vector space with each $V_i$ finite-dimensional, the {\em Hilbert series} of $V$ is 
\begin{equation}
    \Hilb(V;q) := \sum_{i \geq 0} \dim_\FF V_i \cdot q^i.
\end{equation}
More generally, if $V = \bigoplus_{i,j \geq 0} V_{i,j}$ is a bigraded vector space we have
\begin{equation}
    \Hilb(V;q,z) := \sum_{i,j \geq 0} \dim_\FF V_{i,j} \cdot q^i z^j.
\end{equation}
We use $q$ to track bosonic degree and $z$ to track fermionic degree. For a fixed fermionic degree $d$, we write
\begin{equation}
    V_{*,d} := \bigoplus_{i \geq 0} V_{i,d}
\end{equation}
for the fermionic degree part of $V$.

For $1 \leq i \leq n$ we have the partial derivative $\partial_i := \partial/\partial x_i: \FF[\xx_n] \to \FF[\xx_n]$. If $f = f(x_1,\dots,x_n) \in \FF[\xx_n]$,  let $\partial f: \FF[\xx_n] \to \FF[\xx_n]$ be the differential operator
$\partial f := f(\partial_1, \dots, \partial_n).$
We have a map $\odot: \FF[\xx_n] \times \FF[\xx_n] \to \FF[\xx_n]$ given by
$f \odot g := (\partial f)(g)$.
The $\odot$-action gives $\FF[\xx_n]$ the structure of an $\FF[\xx_n]$-module.

Let $I \subseteq \FF[\xx_n]$ be a homogeneous ideal. The {\em inverse system} (or {\em harmonic space}) associated to $I$ is
\begin{equation}
    I^\perp := \{ g \in \FF[\xx_n] \,:\, f \odot g = 0 \text{ for all $f \in I$}\}.
\end{equation}
It follows that $I^\perp$ is a graded subspace of $\FF[\xx_n]$ and one has 
\begin{equation}
    \Hilb(\FF[\xx_n]/I;q) = \Hilb(I^\perp;q).
\end{equation}
Furthermore, if the ideal $I$ is $\symm_n$-stable, then so is $I^\perp$ and one has 
\begin{equation}
    \FF[\xx_n]/I \cong I^\perp \quad \text{as graded $\symm_n$-modules.}
\end{equation}
Steinberg described the inverse system of the coinvariant ideal $I_n = (e_1,\dots,e_n) \subseteq \FF[\xx_n]$ as follows. Recall that $\delta_n = \prod_{1 \leq i < j \leq n} (x_i - x_j)$ is the Vandermonde determinant.

\begin{theorem}
    \label{thm:steinberg}
    {\em (Steinberg \cite{Steinberg})}  One has $\{ f \in \FF[\xx_n] \,:\, f \odot \delta_n = 0 \} =  I_n.$
\end{theorem}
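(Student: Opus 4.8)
The plan is to prove the two inclusions $I_n \subseteq \ann(\delta_n)$ and $\ann(\delta_n) \subseteq I_n$, where I write $\ann(\delta_n) := \{f \in \FF[\xx_n] : f \odot \delta_n = 0\}$. Note that $\ann(\delta_n)$ is a homogeneous ideal: it is visibly a graded subspace, and it is an ideal because $(gf) \odot \delta_n = (\partial g)(f \odot \delta_n)$ for every $g \in \FF[\xx_n]$. The forward inclusion is a one-line degree count. For the reverse inclusion I would use that $R_n$ is a graded Artinian Gorenstein algebra with socle in its top degree $\binom{n}{2} = \deg \delta_n$, which forces $\delta_n$ to be a Macaulay inverse-system generator of $I_n$.

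For $I_n \subseteq \ann(\delta_n)$ it suffices, since $\ann(\delta_n)$ is an ideal, to check $e_d \odot \delta_n = 0$ for $1 \le d \le n$. As $e_d$ is symmetric, the constant-coefficient operator $e_d(\partial_1, \dots, \partial_n)$ commutes with the $\symm_n$-action on $\FF[\xx_n]$, so applying it to the alternating polynomial $\delta_n$ yields an alternating polynomial, now of degree $\binom{n}{2} - d < \binom{n}{2}$. Since a nonzero alternating polynomial in $x_1, \dots, x_n$ is divisible by each $x_i - x_j$ and hence by $\delta_n$, its degree is at least $\binom{n}{2}$; therefore $e_d \odot \delta_n = 0$ and $I_n = (e_1, \dots, e_n) \subseteq \ann(\delta_n)$.

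For the reverse inclusion, the assignment $f \mapsto f \odot \delta_n$ descends to a linear isomorphism $\FF[\xx_n]/\ann(\delta_n) \cong \FF[\partial]\delta_n := \{ g(\partial_1, \dots, \partial_n)\, \delta_n : g \in \FF[\xx_n]\}$, so in view of the forward inclusion and the classical equality $\dim_\FF R_n = n!$ it is enough to prove $\dim_\FF \FF[\partial]\delta_n \ge n!$; then the surjection $\varphi : R_n \twoheadrightarrow B := \FF[\xx_n]/\ann(\delta_n)$ has equal finite-dimensional source and target and is thus an isomorphism, giving $\ann(\delta_n) = I_n$. To obtain the bound: $e_1, \dots, e_n$ is a homogeneous system of parameters for the Cohen--Macaulay ring $\FF[\xx_n]$, hence a regular sequence, so $R_n$ is a complete intersection and therefore a graded Artinian Gorenstein algebra. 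By Artin's monomial basis its top nonzero degree is $s := \binom{n}{2} = \deg \delta_n$, its socle is one-dimensional and sits in degree $s$, and the multiplication pairing $(R_n)_d \times (R_n)_{s-d} \to (R_n)_s \cong \FF$ is perfect for every $d$. Now $\ann(\delta_n)_s = \{g \in \FF[\xx_n]_s : g \odot \delta_n = 0\}$ is the kernel of a nonzero linear functional on $\FF[\xx_n]_s$ (for any monomial $x^c$ occurring in $\delta_n$ one has $x^c \odot \delta_n = \pm \prod_i c_i! \ne 0$ in characteristic $0$), so $B_s$ is one-dimensional and $\varphi_s : (R_n)_s \to B_s$ is injective. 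If $\varphi$ killed some nonzero homogeneous $\bar a \in (R_n)_d$, perfectness of the pairing would give $\bar b \in (R_n)_{s-d}$ with $\bar a \bar b \ne 0$ in $(R_n)_s$ while $\varphi(\bar a \bar b) = \varphi(\bar a)\varphi(\bar b) = 0$, contradicting injectivity of $\varphi_s$; hence $\varphi$ is injective and $\ann(\delta_n) = I_n$.

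The main obstacle is precisely this last step --- knowing $\dim_\FF \FF[\partial]\delta_n = n!$, i.e.\ that $\delta_n$ generates the inverse system of $I_n$; once the Gorenstein structure of $R_n$ and the location of its socle are granted (standard, and implicit in the facts about $R_n$ recalled earlier), the remainder is formal. An alternative route to the bound that sidesteps Gorensteinness is to exhibit $n!$ explicit derivatives of $\delta_n$: index them by the sub-staircase exponent vectors $a$ with $a_i \le i - 1$ and use the apolarity identity $\langle x^b, \partial^a \delta_n \rangle = (a+b)! \cdot (\text{coefficient of } x^{a+b} \text{ in } \delta_n)$ to show that within each bosonic degree they are triangular with respect to a suitable term order, hence linearly independent; this is more hands-on but self-contained.
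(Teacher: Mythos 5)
The paper states this result as a cited theorem of Steinberg \cite{Steinberg} and supplies no proof of its own, so there is no in-paper argument to compare against; I will instead just assess your argument, which is correct.

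Your forward inclusion $I_n \subseteq \ann(\delta_n)$ is the standard degree count: since $e_d(\partial_1,\dots,\partial_n)$ is $\symm_n$-equivariant, $e_d \odot \delta_n$ is again antisymmetric, and any nonzero antisymmetric polynomial is divisible by $\delta_n$ and hence has degree at least $\binom{n}{2}$; as $e_d \odot \delta_n$ has strictly smaller degree, it vanishes. Your reverse inclusion is the now-standard Poincar\'e duality argument: $R_n$ is a graded Artinian complete intersection, hence Gorenstein with one-dimensional socle in top degree $s=\binom{n}{2}=\deg\delta_n$ and perfect multiplication pairings $(R_n)_d\times(R_n)_{s-d}\to(R_n)_s$; together with the forward inclusion this makes the natural graded surjection $\varphi: R_n \twoheadrightarrow \FF[\xx_n]/\ann(\delta_n)$ injective, and so an isomorphism. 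The only step worth spelling out slightly more carefully is the claim that $\varphi_s$ is injective. You correctly observe that $\ann(\delta_n)_s$ is the kernel of the nonzero functional $g\mapsto g\odot\delta_n$ on $\FF[\xx_n]_s$ (nonzero because $x^c\odot\delta_n=\pm\prod_i c_i!$ for any monomial $x^c$ occurring in $\delta_n$), so $\dim B_s = 1$; combined with $\dim(R_n)_s=1$ (from Artin's basis) and the surjectivity of $\varphi$, this forces $\varphi_s$ to be an isomorphism. Your proof is thus complete modulo citing the standard facts that a graded Artinian complete intersection is Gorenstein with socle concentrated in the top degree and that Gorenstein Artinian graded algebras satisfy Poincar\'e duality. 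Your alternative sketch via apolarity triangularity of the partial derivatives $\partial^a\delta_n$ indexed by Artin monomials is a legitimate, more self-contained route to the same conclusion, but as written it is only a sketch and would need the triangularity with respect to a suitable monomial order to be carried out.
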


%\satoshi{do we really need the notion of ``ann$_{\FF[\xx_n]}(-)"$?}

As in the introduction, rank $n$ superspace is a tensor product $\Omega_n = \FF[x_1,\dots,x_n] \otimes_\FF \wedge \{ \theta_1,\dots,\theta_n\}$ of a rank $n$ symmetric algebra with a rank $n$ exterior algebra. This is a bigraded algebra and a bigraded $\symm_n$-module under the diagonal action
\begin{equation}
    w \cdot x_i := x_{w(i)}, \quad w \cdot \theta_i := \theta_{w(i)} \quad \quad \text{for $w \in \symm_n$ and $1 \leq i \leq n.$}
\end{equation}

Inverse systems extend  to $\Omega_n$. For $1 \leq i \leq n$, the partial derivative operator $\partial_i$ acts on the first tensor factor of $\Omega_n$. If $1 \leq i \leq n$ we have the {\em contraction operator} $\partial^\theta_i : \wedge \{ \theta_1,\dots,\theta_n\} \to \wedge \{ \theta_1,\dots,\theta_n\}$ characterized by linearity and
\begin{equation}
    \partial^\theta_i: \theta_{j_1} \cdots \theta_{j_r} \mapsto \begin{cases}
        (-1)^{s-1} \theta_{j_1} \cdots \widehat{\theta_{j_s}} \cdots \theta_{j_r} & \text{if $i = j_s$}, \\
        0 & \text{if $i \neq j_1,\dots,j_r$,}
    \end{cases}
\end{equation}
whenever $1 \leq j_1,\dots,j_r \leq n$ are distinct. The operator $\partial^\theta_i$ acts on $\Omega_n$ via the second tensor factor.

The operators $\partial_i=\partial/\partial x_i$ and $\partial_i^\theta$ satisfy the defining relations of $\Omega_n$. That is, we have
\[\partial_i \partial_j = \partial_j \partial_i, \quad \quad
    \partial_i \partial^\theta_j = \partial^\theta_j \partial_i, \quad \quad \partial^\theta_i \partial^\theta_j = - \partial^\theta_j \partial^\theta_i \quad \quad \text{for $1 \leq i,j \leq n$.} \] 
As such, any $f = f(x_1,\dots,x_n,\theta_1,\dots,\theta_n) \in \Omega_n$ gives rise to a well-defined operator $\partial f: \Omega_n \to \Omega_n$ given by
    $\partial f := f(\partial_1, \dots,\partial_n, \partial^\theta_1,\dots,\partial^\theta_n).$
This gives rise to an action $\odot: \Omega_n \times \Omega_n \to \Omega_n$ where $f \odot g := (\partial f)(g)$. 

It can be shown that if $I \subseteq \Omega_n$ is a bihomogeneous left ideal or right ideal, then $I$ is automatically a bihomogeneous two-sided ideal.\footnote{Since $(\Omega_n)_{i,j} = \FF[x_1,\dots,x_n]_i \otimes \wedge^j \{ \theta_1,\dots,\theta_n\}$, any two bihomogeneous elements of $\Omega_n$  commute or anticommute.}
The {\em inverse system} of a bihomogeneous ideal $I \subseteq \Omega_n$ is 
\begin{equation}
    I^\perp := \{ g \in \Omega_n \,:\, f \odot g = 0 \text{ for all $f \in I$} \}.
\end{equation}
Then $I^\perp \subseteq \Omega_n$ is a bigraded subspace with 
\begin{equation}
    \Hilb(\Omega_n/I;q,z) = \Hilb(I^\perp;q,z).
\end{equation}
Furthermore, if $I$ is $\symm_n$-stable, so is $I^\perp$, and we have \begin{equation}
    \Omega_n/I \cong I^\perp \quad \text{as bigraded $\symm_n$-modules.}
\end{equation}

\subsection{$\symm_n$-representation theory}
Since $\FF$ has characteristic 0, the irreducible representations of $\symm_n$ over $\FF$ are in one-to-one correspondence with partitions of $n$. If $\lambda \vdash n$ is a partition, we write $V^\lambda$ for the associated $\symm_n$-irreducible.

Let $V$ be a finite-dimensional $\symm_n$-module. There exist unique multiplicities $c_\lambda \geq 0$ such that $V \cong \bigoplus_{\lambda \vdash n} c_\lambda V^\lambda$. The {\em Frobenius characteristic} of $V$ is the symmetric function
\begin{equation}
    \Frob(V) := \sum_{\lambda \vdash n} c_\lambda  s_\lambda \in \Lambda_n.
\end{equation}
 More generally, if $V = \bigoplus_{i \geq 0} V_i$ or $V = \bigoplus_{i,j \geq 0} V_{i,j}$ is a graded or bigraded $\symm_n$-module with each piece finite-dimensional, we have the (bi)graded Frobenius characteristic
\begin{equation}
    \grFrob(V;q) := \sum_{i\geq 0} \Frob(V_i) \cdot q^i, \quad \quad \grFrob(V;q,z) := \sum_{i,j \geq 0} \Frob(V_{i,j}) \cdot q^i t^j.
\end{equation}

If $\mu = (\mu_1,\dots,\mu_p) \vdash n$ is a partition, we have the  {\em parabolic subgroup} $\symm_\mu = \symm_{\mu_1} \times \cdots \times \symm_{\mu_p} \subseteq \symm_n$. We write $\varepsilon_\mu \in \FF[\symm_n]$ for the group algebra element
\begin{equation}
    \varepsilon_\mu := \sum_{w \in \symm_\mu} \sign(w) \cdot w
\end{equation}
which antisymmetrizes over $\symm_\mu$. If $V$ is an $\symm_n$-module, then $\varepsilon_\mu \cdot V = \{ \varepsilon_\mu \cdot v \,:\, v \in V \}$ is an $\FF$-vector space. 

Let $V$ be an $\symm_n$-module and $\mu \vdash n$. The dimension of $\varepsilon_\mu \cdot V$ may be understood in terms of symmetric functions. Let $\langle -, - \rangle$ be the Hall inner product on $\Lambda$ with respect to which the Schur basis $\{s_\lambda\}$ is orthonormal. Given $F \in \Lambda$, we have a linear operator $F^\perp: \Lambda \to \Lambda$ characterized by
\begin{equation}
    \langle F^\perp G, H \rangle = \langle G, FH \rangle \quad \text{for all $G,H \in \Lambda.$}
\end{equation}
In other words, the operator $F^\perp$ is adjoint to multiplication by $F$. The connection between these operators and $\varepsilon_\mu$ is as follows.

\begin{lemma}
    \label{lem:module-antisymmetrization}
    For any $\symm_n$-module $V$ and any partition $\mu \vdash n$, one has \[e_\mu^\perp (\Frob(V)) = \dim_\FF (\varepsilon_\mu \cdot V).\]
\end{lemma}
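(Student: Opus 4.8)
The plan is to interpret $\dim_\FF(\varepsilon_\mu \cdot V)$ as the trace of an idempotent, convert that trace into an inner product of $\symm_\mu$-characters, push it up to $\symm_n$ via Frobenius reciprocity, and finally translate everything into the symmetric function side where the operator $e_\mu^\perp$ lives.

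First I would record that $\varepsilon_\mu^2 = |\symm_\mu| \cdot \varepsilon_\mu$ in $\FF[\symm_n]$, so that $p_\mu := \tfrac{1}{|\symm_\mu|}\varepsilon_\mu$ is an idempotent; since $\varepsilon_\mu \cdot V = p_\mu \cdot V$ is exactly the image of this idempotent, its dimension equals the trace of $p_\mu$ acting on $V$. Writing $\chi_V$ for the character of $V$, this gives
\begin{equation}
    \dim_\FF(\varepsilon_\mu \cdot V) = \frac{1}{|\symm_\mu|}\sum_{w \in \symm_\mu} \sign(w)\, \chi_V(w) = \big\langle \Res^{\symm_n}_{\symm_\mu} \chi_V,\ \sign_{\symm_\mu}\big\rangle_{\symm_\mu},
\end{equation}
the multiplicity of the sign representation of $\symm_\mu$ inside the restriction of $V$. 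By Frobenius reciprocity this equals $\big\langle \chi_V,\ \Ind^{\symm_n}_{\symm_\mu}\sign_{\symm_\mu}\big\rangle_{\symm_n}$.

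Next I would invoke the two standard facts about the Frobenius characteristic: it is an isometry from the representation ring of $\symm_n$ (with the character inner product) onto $(\Lambda_n, \langle -,-\rangle)$, and it intertwines induction product with multiplication, so that $\Frob\big(\Ind^{\symm_n}_{\symm_\mu}(\sign_{\symm_{\mu_1}} \boxtimes \cdots \boxtimes \sign_{\symm_{\mu_p}})\big) = e_{\mu_1}\cdots e_{\mu_p} = e_\mu$. Combining with the previous paragraph,
\begin{equation}
    \dim_\FF(\varepsilon_\mu \cdot V) = \big\langle \Frob(V),\ e_\mu \big\rangle.
\end{equation}
Finally, since $\Frob(V) \in \Lambda_n$ and $e_\mu \in \Lambda_n$ are both homogeneous of degree $n$, the element $e_\mu^\perp \Frob(V)$ lies in $\Lambda_0 = \FF \cdot 1$; a degree-$0$ symmetric function $c\cdot 1$ satisfies $c = \langle c \cdot 1, 1\rangle$, and by the defining adjointness of $e_\mu^\perp$ we get $e_\mu^\perp \Frob(V) = \langle e_\mu^\perp \Frob(V), 1\rangle = \langle \Frob(V), e_\mu \cdot 1\rangle = \langle \Frob(V), e_\mu\rangle$. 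This matches the displayed dimension and completes the argument.

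The content here is entirely assembled from standard representation theory; the only place requiring a small amount of care is the last step, namely observing that $e_\mu^\perp$ applied to a top-degree element returns an honest scalar that literally equals the pairing $\langle \Frob(V), e_\mu\rangle$, rather than merely being proportional to it. I do not anticipate any genuine obstacle, so the write-up can be kept short.
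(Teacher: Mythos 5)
Your proof is correct, and it takes a genuinely different (and arguably cleaner) route from the paper's. The paper reduces by linearity to the case of an irreducible $V = V^\lambda$, applies the Littlewood--Richardson rule to decompose $\Res^{\symm_n}_{\symm_\mu} V^\lambda$, identifies $\varepsilon_\mu \cdot V^\lambda$ with the sign-isotypic component, and then uses the Dual Pieri Rule to recognize the resulting multiplicity as the Kostka number $K_{\lambda',\mu} = \langle s_\lambda, e_\mu\rangle$. You instead avoid the reduction to irreducibles and the tableau combinatorics entirely: you read $\dim_\FF(\varepsilon_\mu \cdot V)$ as the trace of the idempotent $\frac{1}{|\symm_\mu|}\varepsilon_\mu$, convert this to the multiplicity of $\sign_{\symm_\mu}$ in the restriction, push it up to $\symm_n$ by Frobenius reciprocity, and then invoke the two structural facts about the Frobenius characteristic map (it is an isometry and sends induction product to product, so $\Ind^{\symm_n}_{\symm_\mu}\sign \mapsto e_\mu$). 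What the paper's approach buys is an explicit identification of the multiplicity with a Kostka number, which is combinatorially concrete; what yours buys is uniformity (no linearity reduction) and a shorter chain of citations to completely standard facts. Your closing remark about $e_\mu^\perp$ landing in $\Lambda_0$ and the resulting scalar being literally the pairing is the right amount of care; the paper implicitly uses the same identification when it writes $\langle s_\lambda, e_\mu\rangle = e_\mu^\perp s_\lambda$.
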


Lemma~\ref{lem:module-antisymmetrization} is standard, but we include a proof for completeness.

\begin{proof}
Write $\mu = (\mu_1,\dots,\mu_p)$. By linearity, it suffices to prove the lemma when $V = V^\lambda$ is the irreducible corresponding to $\lambda \vdash n$. The Littlewood-Richardson Rule gives the $\symm_\mu$-module decomposition
\begin{equation}
    \Res^{\symm_n}_{\symm_{\mu}}(V^\lambda) \cong \bigoplus_{\nu^{(1)} \vdash\mu_1, \dots , \nu^{(p)}  \vdash \mu_p} c_{\nu^{(1)}, \dots, \nu^{(p)}}^\lambda \cdot V^{\nu^{(1)}} \otimes_\FF \cdots \otimes_\FF V^{\nu^{(p)}}.
\end{equation}
Here $c_{\nu^{(1)}, \dots, \nu^{(p)}}^\lambda$ is the {\em Littlewood-Richardson coefficient} giving the coefficient of $s_\lambda$ in the Schur expansion of $s_{\nu^{(1)}} \cdots s_{\nu^{(p)}}$. Writing $\sign$ for the sign character of $\symm_\mu$, we have
\begin{equation}
    \varepsilon_\mu \cdot V^\lambda = (\Res^{\symm_n}_{\symm_{\mu}}(V^\lambda))^\sign = c_{(1^{\mu_1}), \dots,(1^{\mu_p})}^\lambda \cdot V^{(1^{\mu_1})} \otimes_\FF \cdots \otimes_\FF V^{(1^{\mu_p})}.
\end{equation}
By the Dual Pieri Rule, we get $c_{(1^{\mu_1}), \dots,(1^{\mu_p})}^\lambda = K_{\lambda',\mu}$ is the {\em Kostka number} counting semistandard tableaux of shape $\lambda'$ with $\mu_i$ copies of $i$. Since $V^{(1^{\mu_1})} \otimes_\FF \cdots \otimes_\FF V^{(1^{\mu_p})}$ is 1-dimensional, we get 
\begin{equation}
    \dim_\FF (\varepsilon_\mu \cdot V^\lambda) = K_{\lambda',\mu} = \langle s_\lambda, e_\mu \rangle = e_\mu^\perp s_\lambda = e_\mu^\perp \Frob(V^\lambda)
\end{equation}
and we are done.
\end{proof}

\section{The spaces $SR_n$ and $SH_n$}
\label{sec:RH}

In this section we overview the results in \cite{ACKMR,RW} about $SR_n$ we will use in our proof of the Fields Conjecture.
Given a subset $J \subseteq [n]$, we define the {\em $J$-staircase} $\stair(J) = (\stair(J)_1, \dots, \stair(J)_n)$ recursively by
\begin{equation}
    \stair(J)_1 = \begin{cases}
        1 & 1 \notin J, \\
        0 & 1 \in J,
    \end{cases} \quad \quad
    \stair(J)_i = \begin{cases}
        \stair(J)_{i-1} + 1 & i \notin J, \\
        \stair(J)_{i-1} & i \in J.
    \end{cases}
\end{equation}
For example, if $n = 8$ and $J = \{3,6,7\}$ we have $\stair(J) = (1,2,2,3,4,4,4,5)$. Let $\AAA_n(J)$ be the set of monomials in $\FF[\xx_n]$ given by
\begin{equation}
    \AAA_n(J) := \{ x_1^{a_1} \cdots x_n^{a_n} \,:\, a_i < \stair(J)_i \text{ for all $i$}\}.
\end{equation}
We also set
\begin{equation}
    \AAA_n(J) \cdot \theta_J := \{ m \cdot \theta_J \,:\, m \in \AAA_n(J) \}
\end{equation}
where $\theta_J$ denotes the product $\prod_{j \in J} \theta_j$ in increasing subscript order. Finally, we let 
\begin{equation}
    \AAA_n := \bigsqcup_{J \subseteq [n]} \AAA_n(J) \cdot \theta_J.
\end{equation}
The following result was conjectured by Sagan and Swanson \cite{SS}.

\begin{theorem}
    \label{thm:artin-basis} {\em (Angarone--Commins--Karn--Murai--Rhoades \cite[Cor. 8.2]{ACKMR})} The set $\AAA_n$ descends to a vector space basis of $SR_n$.
\end{theorem}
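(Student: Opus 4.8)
The plan is to show that the set $\AAA_n$ spans $SR_n$ and is linearly independent modulo $SI_n$, with the known Hilbert series count $\#\OP_n = \dim_\FF SR_n$ serving as the fulcrum: since $\#\AAA_n = \#\OP_n$ (a combinatorial identity to be checked), it suffices to prove \emph{either} spanning \emph{or} independence. I would aim at the spanning statement, as it is the one amenable to a straightening/Gröbner-style argument directly in $\Omega_n$. First I would fix a monomial order on $\Omega_n$ refining bosonic degree and, within a fixed fermionic monomial $\theta_J$, using a term order on the $x$-monomials (e.g. lexicographic with $x_1 > \cdots > x_n$), so that every nonzero element of $\Omega_n$ has a well-defined leading bi-monomial $m \cdot \theta_J$.

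The key step is a \emph{reduction lemma}: any monomial $x_1^{a_1}\cdots x_n^{a_n}\cdot\theta_J$ with some $a_i \geq \stair(J)_i$ can be rewritten modulo $SI_n$ as a linear combination of $\theta_J$-monomials (and lower fermionic-degree terms) that are strictly smaller in the term order. This should follow by exhibiting, for each $J$ and each index $i$, an element of $SI_n$ whose leading term is $x_i^{\stair(J)_i}\cdot\theta_J$ times a monomial in $x_{i+1},\dots$. Concretely one builds such elements from the generators $e_1,\dots,e_n,de_1,\dots,de_n$ of $SI_n$: the forms $de_1,\dots,de_n$ contribute $\theta$'s, and multiplying them by suitable monomials and taking $\FF$-linear combinations produces relations that "carry" $x$-degree in the presence of the fermionic factor $\theta_J$. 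I expect this construction to mirror the colon-ideal picture of \cite{ACKMR}: the quotient $\FF[\xx_n]/(I_n : \theta\text{-shadow of }J)$ has $\AAA_n(J)$ as a monomial basis by Theorem~\ref{thm:colon-ideal-basis}, and the content of the present theorem is that these colon-ideal bases, one for each $J$, assemble correctly across fermionic degrees. So the real work is organizing an induction on fermionic degree $|J|$ (from top, $|J|=n$, downward), at each stage using the already-established structure in higher fermionic degree to control the "lower-order terms" that appear when a relation from $SI_n$ is applied.

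The main obstacle, as the authors themselves flag in the introduction, is precisely the "inscrutable Gröbner theory" of $SR_n$: the ideal $SI_n$ is not a complete intersection in any naive sense once the $\theta$'s enter, and a leading-term argument requires knowing that the relations one writes down actually have the claimed leading terms with nonzero coefficient and that the reduction process terminates. I would handle termination via the well-foundedness of the term order together with the a priori bound $\dim_\FF SR_n = \#\OP_n$ from Fields Conjecture~1 (already proven in \cite{RW}): since spanning by $\AAA_n$ would force $\dim_\FF SR_n \leq \#\AAA_n = \#\OP_n$, and the reverse inequality is known, equality and hence linear independence come for free, and there is no separate independence argument to make. Thus the entire proof reduces to the reduction lemma above.

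The final step is bookkeeping: verify $\#\AAA_n = \sum_{J\subseteq[n]} \prod_{i=1}^n \stair(J)_i = \#\OP_n$ by a generating-function or bijective argument (grouping $J$ by size $k$ gives $\sum_k \#\OP_{n,k}$ after identifying $\sum_{|J|=n-k}\prod_i \stair(J)_i$ with $k!\,\Stir(n,k)$), and assemble the pieces: spanning $+$ the dimension count $\Rightarrow$ basis.
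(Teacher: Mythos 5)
Your high-level logic is sound and matches the actual architecture: since $\dim_\FF SR_n = \#\OP_n$ is known (Fields Conjecture~1, proven in \cite{RW}), and $\#\AAA_n = \#\OP_n$ by the $q=1$ specialization of that same result, it suffices to prove that $\AAA_n$ spans $SR_n$. You also correctly identify the colon-ideal quotients $\FF[\xx_n]/(I_n:f_J)$ and Theorem~\ref{thm:colon-ideal-basis} as the pieces that must be ``assembled across fermionic degrees.'' So far so good.

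The genuine gap is the ``reduction lemma'' you make the crux of the argument. You propose to exhibit, for each $J$ and each $i$, an element of $SI_n$ whose leading term is $x_i^{\stair(J)_i}\cdot\theta_J$ times a lex-smaller monomial, built from the generators $e_1,\dots,e_n,de_1,\dots,de_n$ under a chosen term order. This is precisely the strategy the paper flags as failing: the Gr\"obner theory of $SI_n$ --- and of each colon ideal $(I_n:f_J)$ --- is ``badly behaved,'' and there is no known way to produce such straightening relations or to verify that the claimed leading terms survive cancellation. You acknowledge the obstacle but then assert ``the entire proof reduces to the reduction lemma above,'' which leaves the hard step entirely open. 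Note also that you cannot simply appeal to Theorem~\ref{thm:colon-ideal-basis} to supply these relations, because that theorem is a statement about the commutative quotients $\FF[\xx_n]/(I_n:f_J)$ and does not by itself hand you superspace elements of $SI_n$ with controlled $\theta_J$-leading terms.

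What the actual proof in \cite{ACKMR} does instead is avoid Gr\"obner theory altogether: Theorem~\ref{thm:colon-ideal-basis} is established via Solomon--Terao algebras of hyperplane arrangements (a structural, non-combinatorial tool), and then the transfer principle of \cite[Lem.~5.2]{RW} --- an inverse-system / harmonic-space argument, not a straightening argument --- converts the family of colon-ideal bases indexed by $J\subseteq[n]$ into the superspace basis $\AAA_n$ of $SR_n$. Your ``induction on fermionic degree from the top down'' intuition is pointing roughly in the direction of that transfer principle, but you would need to formulate and invoke it explicitly; as written, your proposal substitutes a reduction argument that is not available in its place. To repair the proposal, drop the straightening lemma and instead cite Theorem~\ref{thm:colon-ideal-basis} together with the transfer principle \cite[Lem.~5.2]{RW} as black boxes; the dimension count then closes the argument exactly as you describe.
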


In the case $n = 3$, this basis is as follows. We have 
\begin{center}
    \begin{tabular}{c | c | c | c | c}
        $J$  & $\varnothing$ & $3$ & $2$ & $23$\\ \hline
        $\stair(J)$ & $(1,2,3)$ & $(1,2,2)$ &  $(1,1,2)$ & $(1,1,1)$\\ \hline
        $\AAA_3(J)$ & $x_2 x_3^2, x_2 x_3, x_3^2, x_2, x_3,1$ & $x_2 x_3, x_2, x_3, 1$ &  $1,x_3$ & $1$ \\
    \end{tabular}
\end{center}
while $\AAA_3(J) = \varnothing$ when $1 \in J$. Thus, one obtains the basis
\[
\AAA_3 = \{x_2 x_3^2, x_2 x_3, x_3^2, x_2, x_3,1\} \sqcup \{x_2 x_3, x_2, x_3, 1\} \cdot \theta_3 \sqcup \{x_3,1\} \cdot \theta_2 \sqcup \{1\} \cdot \theta_2 \theta_3
\]
of $SR_3$.

The supercommutative algebra result in Theorem~\ref{thm:artin-basis} was proven in \cite{ACKMR} by establishing a family of $2^n$ results in commutative algebra. If $I \subseteq \FF[\xx_n]$ is an ideal and $f \in \FF[\xx_n]$ is a polynomial, recall that the {\em colon ideal} (or {\em ideal quotient}) $(I:f) \subseteq \FF[\xx_n]$ is given by
\begin{equation}
    (I:f) := \{ g \in \FF[\xx_n] \,:\, gf \in I \}.
\end{equation}
It is not hard to see that $(I:f)$ is an ideal which contains $I$. In particular, if $I_n \subseteq \FF[\xx_n]$ is the coinvariant ideal, we obtain an ideal $(I_n:f)$ which contains $I_n$.

The relevant polynomials for our colon ideals are indexed by subsets of $[n]$. If $J \subseteq [n]$, we define the polynomial $f_J \in \FF[\xx_n]$ by
\begin{equation}
    f_J := \prod_{j \in J} \left( x_j \times \prod_{j < i \leq n} (x_j - x_i) \right).
\end{equation}
The polynomials $f_J$ played a crucial role in the earlier works \cite{ACKMR, RW}, and will in this paper too.

\begin{theorem}
    \label{thm:colon-ideal-basis} {\em (Angarone--Commins--Karn--Murai--Rhoades \cite[Thm. 8.1]{ACKMR})} For any $J \subseteq [n]$, the set $\AAA_n(J)$ descends to a vector space basis of $\FF[\xx_n]/(I_n:f_J)$.
\end{theorem}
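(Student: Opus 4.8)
The plan is to establish the two halves of the claim separately: that $\AAA_n(J)$ spans $\FF[\xx_n]/(I_n:f_J)$, and that its image there is linearly independent. Since $|\AAA_n(J)| = \prod_{i=1}^{n}\stair(J)_i$ and (as one reads off the recursion) $\stair(J)_i = |\{i' \in [i] : i' \notin J\}|$, the independence half will follow from spanning together with the dimension bound $\dim_\FF \FF[\xx_n]/(I_n:f_J) \geq \prod_i \stair(J)_i$; so there are really two inequalities to prove, and the target is $\Hilb(\FF[\xx_n]/(I_n:f_J);q) = \prod_i[\stair(J)_i]_q$. Throughout it is convenient to use the $\FF$-linear isomorphism $\FF[\xx_n]/(I_n:f_J) \xrightarrow{\ \sim\ } \overline{f_J}\cdot R_n$ induced by $g \mapsto \overline{g f_J}$, which recasts the claim as the assertion that $\{\overline{m f_J} : m \in \AAA_n(J)\}$ is a basis of the principal submodule $\overline{f_J}\cdot R_n \subseteq R_n$.

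For the spanning (upper) bound I would run a Gröbner-style reduction. Fixing the lexicographic order with $x_1 > \cdots > x_n$, it suffices to produce for each $i \in [n]$ an element $g_i \in (I_n : f_J)$ with leading monomial $x_i^{\stair(J)_i}$, since then the monomials divisible by no $x_i^{\stair(J)_i}$ are exactly $\AAA_n(J)$ and these span the quotient. When $i \notin J$ one expects $g_i$ to arise from the complete-homogeneous relations $h_r(x_1,\dots,x_i)\equiv 0$ valid in $R_n$, corrected by lower-order terms so that the factors of $f_J$ supported on $[i]$ absorb precisely the drop from degree $i$ down to $\stair(J)_i = |[i]\setminus J|$; when $i \in J$ the equality $\stair(J)_i = \stair(J)_{i-1}$ reflects that a factor $x_i$ or $(x_j-x_i)$ of $f_J$ already forces a lower-degree relation in $x_i$, so that $g_i$ descends from $g_{i-1}$. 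A cleaner bookkeeping is to induct on $|J|$ via $(I_n:f_J) = \big((I_n : f_{J\setminus\{j_0\}}) : x_{j_0}\prod_{j_0 < i \le n}(x_{j_0}-x_i)\big)$ for $j_0 \in J$, so that each step only asks how multiplication by a single factor acts on the quotient already described by the inductive hypothesis, the base case $J=\varnothing$ being Artin's classical basis of $R_n$.

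For the dimension (lower) bound I would pass to inverse systems. By Steinberg's theorem (Theorem~\ref{thm:steinberg}), $g \in (I_n:f_J)$ if and only if $(\partial g)\big((\partial f_J)(\delta_n)\big) = 0$; hence $(I_n:f_J) = \{g : g\odot p = 0\}$ for the single bihomogeneous polynomial $p := f_J \odot \delta_n$, so that $(I_n:f_J)^\perp = \FF[\xx_n]\odot p$ is the span of all partial derivatives of $p$. One then computes $p$ explicitly — up to a nonzero scalar it is a product of Vandermonde/flagged-Schur-type determinants governed by the positions of $J$ — and exhibits $\prod_i\stair(J)_i$ of its partial derivatives that are linearly independent, for instance by pairing them under $\odot$ against the monomials of $\AAA_n(J)$. (Equivalently, one argues directly in $R_n$ that the $\overline{m f_J}$, $m\in\AAA_n(J)$, are independent by comparing their leading terms to the Artin basis.)

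The step I expect to be the main obstacle is forcing the two bounds to meet exactly: either certifying that $g_i$ actually lies in $(I_n:f_J)$ and has leading degree precisely $\stair(J)_i$ — colon ideals resist direct hand computation, and this is where the precise product shape of $f_J$ is essential — or, dually, pinning $\dim_\FF\FF[\xx_n]/(I_n:f_J)$ down on the nose. The clean way through is to recognize $(I_n:f_J)$, or an ideal easily compared with it, as the defining ideal of a Solomon--Terao algebra $\ST(\mathcal{A},\eta)$ of a free (multi)arrangement $\mathcal{A}$ refining the braid arrangement and having exponents $\{\stair(J)_i - 1\}_{i\in[n]}$: the machinery of \cite{AMMN} then yields the freeness/complete-intersection property and the Hilbert series $\prod_i[\stair(J)_i]_q$ simultaneously, after which the Gröbner argument above upgrades this numerical coincidence to the explicit monomial basis $\AAA_n(J)$.
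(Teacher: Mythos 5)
The statement you are proving is imported from \cite{ACKMR}; the present paper does not reprove it, but the discussion that immediately follows it spells out both the route that works (Solomon--Terao algebras) and the route that does not: ``the Gr\"obner theory of the colon ideal $(I_n:f_J)$ is badly behaved, and combinatorially showing that $\AAA_n(J)$ spans $\FF[\xx_n]/(I_n:f_J)$ seems to be forbiddingly difficult.'' Your final paragraph does land on the Solomon--Terao machinery of \cite{AMMN} as ``the clean way through,'' which matches what the cited proof actually does; the preliminary reductions you record (the isomorphism with $\overline{f_J}\cdot R_n$, the identity $\stair(J)_i = \#\{i'\le i : i'\notin J\}$, the colon-ideal recursion $(I_n:f_J) = ((I_n:f_{J\setminus\{j_0\}}):x_{j_0}\prod_{j_0<i\le n}(x_{j_0}-x_i))$, and the inverse-system description $(I_n:f_J) = \{g : g\odot(f_J\odot\delta_n)=0\}$ coming from Theorem~\ref{thm:steinberg}) are all correct.

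The gap is in how you propose to finish once Solomon--Terao theory hands you the Hilbert series $\prod_i[\stair(J)_i]_q$. You claim that ``the Gr\"obner argument above upgrades this numerical coincidence to the explicit monomial basis $\AAA_n(J)$,'' but that argument requires producing elements $g_i\in(I_n:f_J)$ with leading monomials $x_i^{\stair(J)_i}$ in lex order, and knowing the dimension of the quotient on the nose does not produce those elements for you --- it is exactly the step the paper flags as forbidding, and the Hilbert series does not make it easier. So the closing of your argument is routed through the one step you yourself identified as the main obstacle, without actually removing the obstacle. Once the dimension count $\dim_\FF\FF[\xx_n]/(I_n:f_J) = \#\AAA_n(J)$ is in hand, the half you need is a single inclusion, and the tractable one is \emph{independence}, not spanning: by your own inverse-system observation, $\AAA_n(J)$ is independent modulo $(I_n:f_J)$ precisely when $\{m\odot(f_J\odot\delta_n) : m\in\AAA_n(J)\}$ is linearly independent in $\FF[\xx_n]$, and that is the concrete statement left to verify. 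Your sketch mentions computing $f_J\odot\delta_n$ as a determinant and pairing derivatives against $\AAA_n(J)$, but never carries that calculation out, so the proposal ends having identified the right tool but with the final step both misrouted and unexecuted.
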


A transfer principle of Rhoades--Wilson \cite[Lem. 5.2]{RW} was used in \cite{ACKMR} to deduce Theorem~\ref{thm:artin-basis} from Theorem~\ref{thm:colon-ideal-basis}. When $J = \varnothing$, Theorem~\ref{thm:colon-ideal-basis} reduces to Artin's basis \eqref{eq:intro-artin-monomials} of $R_n$. For general $J \subseteq [n]$, the Gr\"obner theory of the colon ideal $(I_n:f_J)$ is badly behaved, and combinatorially showing that $\AAA_n(J)$ spans $\FF[\xx_n]/(I_n:f_J)$ seems to be  forbiddingly difficult. To get around this, Angarone et. al. used Solomon--Terao algebras of hyperplane arrangements.

The final result on $SR_n$ we need concerns its inverse system $SI_n^\perp \subseteq \Omega_n$. This space is often called the {\em superharmonic space} and denoted $SH_n := SI_n^\perp$. For $j \geq 1$, we let $d_j: \Omega_n \to \Omega_n$ be the {\em higher Euler operator}
\begin{equation}
    d_j(f) := \sum_{i=1}^n \frac{\partial^j f}{\partial x_j^j} \theta_i \quad \quad \text{for $f \in \Omega_n$.}
\end{equation}
The following result is the superspace analog of Steinberg's Theorem~\ref{thm:steinberg}. It was conjectured by Swanson and Wallach \cite{SW2}.

\begin{theorem}
    \label{thm:operator}
    {\em (Operator Theorem; Rhoades-Wilson \cite[Thm. 5.1]{RW})} The superharmonic space $SH_n = SI_n^\perp$ is the smallest linear subspace of $\Omega_n$ which $\dots$
    \begin{itemize}
        \item contains the Vandermonde determinant $\delta_n$,
        \item is closed under the higher Euler operators $d_1, d_2,\dots,d_{n-1}$, and
        \item is closed under the partial derivatives $\partial/\partial x_1,\dots,\partial/\partial x_n$.
    \end{itemize}
\end{theorem}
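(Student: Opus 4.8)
Write $U_n$ for the smallest subspace of $\Omega_n$ containing $\delta_n$ and closed under $d_1,\dots,d_{n-1}$ and $\partial/\partial x_1,\dots,\partial/\partial x_n$; the goal is to prove $U_n=SH_n$. The whole plan rests on the apolarity pairing $\langle f,g\rangle:=$ (constant term of $f\odot g$), which is nondegenerate on each bihomogeneous piece of $\Omega_n$; under it one has $SH_n=SI_n^\perp=(SI_n)^{\perp}$ (orthogonal complement), $\partial/\partial x_i$ is adjoint to multiplication by $x_i$, $\partial^\theta_i$ is adjoint to multiplication by $\theta_i$, and hence $d_j=\sum_i\theta_i\,\partial^j/\partial x_i^j$ is adjoint, up to an overall sign, to the \emph{odd derivation} $D_j:=\sum_i x_i^j\,\partial^\theta_i$ of $\Omega_n$.

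\emph{The inclusion $U_n\subseteq SH_n$.} Three checks suffice. First, $\delta_n\in SH_n$: since $SI_n=(e_1,\dots,e_n,de_1,\dots,de_n)$ and, for an ideal, $g\in I^\perp$ if and only if $h\odot g=0$ for all $h$ in a generating set, we need only $e_k\odot\delta_n=0$ and $(de_k)\odot\delta_n=0$; the former is exactly Steinberg's Theorem~\ref{thm:steinberg}, and the latter holds trivially because $\partial(de_k)$ strictly lowers fermionic degree while $\delta_n$ has fermionic degree $0$. Second, $SH_n$ is stable under each $\partial/\partial x_i$ because $SI_n$, being an ideal, is stable under multiplication by $x_i$, so its orthogonal complement $SH_n$ is stable under the adjoint operator $\partial/\partial x_i$. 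Third, $SH_n$ is stable under each $d_j$ (in fact for all $j\ge 1$, in particular $j\le n-1$): it is enough that $SI_n$ be stable under $D_j$, and since $D_j$ is an odd derivation it is enough to check this on the generators, where $D_j(e_k)=0$ and $D_j(de_k)=\sum_\ell x_\ell^j\,(\partial e_k/\partial x_\ell)$ is a symmetric polynomial of positive degree, hence lies in $(\FF[\xx_n]^{\symm_n}_+)\subseteq SI_n$. These facts say precisely that $SH_n$ is one of the subspaces of which $U_n$ is the minimum, so $U_n\subseteq SH_n$.

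\emph{The inclusion $SH_n\subseteq U_n$.} Because $U_n\subseteq SH_n$ and $\Hilb(SH_n;q,z)=\Hilb(SR_n;q,z)$, equality $U_n=SH_n$ is equivalent to the coefficientwise bound $\Hilb(U_n;q,z)\ge\Hilb(SR_n;q,z)$, and by the Artin-type basis of Theorem~\ref{thm:artin-basis} (equivalently, by Fields Conjecture 1) the target in fermionic degree $r=n-k$ is $\sum_{|J|=r}\prod_{i=1}^n[\stair(J)_i]_q=[k]!_q\,\Stir_q(n,k)$. Since the $d_j$ pairwise anticommute, square to zero, and commute with the $\partial/\partial x_i$, the fermionic-degree-$r$ part of $U_n$ is spanned by the elements $\DDD(\delta_n)$ with $\DDD$ ranging over composites of $d_1,\dots,d_{n-1},\partial/\partial x_1,\dots,\partial/\partial x_n$ of net fermionic degree $r$; moreover this part is an $\FF[\xx_n]$-submodule of $\Omega_n$ under $\odot$. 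The core of the argument is to produce, for each $r$-subset $J\subseteq[n]$ with $1\notin J$, a composite $\DDD_J$ such that $\DDD_J(\delta_n)$ has \emph{fermionic leading term} (with respect to a fixed term order comparing $\theta$-monomials first, then $x$-monomials) equal to $c_J\cdot(f_J\odot\delta_n)\cdot\theta_{T(J)}$ for a nonzero scalar $c_J$ and an explicit injection $J\mapsto T(J)$ on $r$-subsets. Granting this, pass to the associated graded of $(U_n)_{*,r}$ for the leading-$\theta$-monomial filtration: the $\FF[\xx_n]$-submodule generated by $\DDD_J(\delta_n)$ projects in the $\theta_{T(J)}$-graded piece onto $c_J\cdot\FF[\xx_n]\odot(f_J\odot\delta_n)$, which by Steinberg's Theorem and Theorem~\ref{thm:colon-ideal-basis} is isomorphic to $\FF[\xx_n]/(I_n:f_J)$ and hence has graded dimension $\sum_{x^\alpha\in\AAA_n(J)}q^{|\alpha|}=\prod_i[\stair(J)_i]_q$. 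Distinct $J$ occupy distinct graded pieces, so $\Hilb((U_n)_{*,r};q)\ge\sum_{1\notin J,\,|J|=r}\prod_i[\stair(J)_i]_q$; the subsets $J$ with $1\in J$ contribute nothing (for them $f_J\in I_n$, so $\AAA_n(J)=\varnothing$), so this equals the target $[k]!_q\Stir_q(n,k)$. Together with $U_n\subseteq SH_n$ this forces $\Hilb(U_n)=\Hilb(SR_n)$, hence $U_n=SH_n$.

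The main obstacle is the construction of the $\DDD_J$ and the verification of their fermionic leading terms. Each $\DDD_J(\delta_n)$ is a linear combination of iterated derivatives $\det\bigl(\partial^{j}/\partial x_a^{j}\bigr)_{a\in T,\,j\in S}(\delta_n)\cdot\theta_T$, i.e.\ of flagged skew Schur polynomials tagged by $\theta$-monomials, and choosing the coefficients so that the $\theta$-leading coefficient collapses exactly to (a scalar times) $f_J\odot\delta_n$ — together with checking that $J\mapsto T(J)$ is injective — is the delicate combinatorial bookkeeping. This is precisely the step that the body of the present paper sidesteps, in its parabolic refinement, by defining the analogous operators $\DDD^\TT_\mu$ implicitly through linear algebra rather than by an explicit flagged-Schur formula; for a direct proof of the Operator Theorem, however, carrying out this computation appears unavoidable and is where essentially all of the work lies.
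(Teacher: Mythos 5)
Your easy direction ($U_n \subseteq SH_n$) is correct: the adjointness of $\partial/\partial x_i$ to multiplication by $x_i$, and (up to sign) of $d_j$ to the odd derivation $D_j = \sum_i x_i^j\,\partial^\theta_i$, reduces the closure of $SH_n = SI_n^\perp$ to stability of $SI_n$ under $x_i\cdot(-)$ and $D_j$, which you correctly verify on the generators $e_k, de_k$; and Steinberg's Theorem~\ref{thm:steinberg} handles $\delta_n \in SH_n$. Your reduction of the hard direction to the Hilbert series bound $\Hilb(U_n;q,z) \ge \Hilb(SR_n;q,z)$, and your plan to realize it via the leading-$\theta$-monomial filtration keyed to Theorem~\ref{thm:colon-ideal-basis}, is also the right architecture. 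The genuine gap is the one you name yourself: the operators $\DDD_J$ built from $d_1,\dots,d_{n-1}$ and $\partial/\partial x_1,\dots,\partial/\partial x_n$, with the property that $\DDD_J(\delta_n)$ has leading $\theta$-monomial whose polynomial coefficient is a nonzero multiple of $f_J\odot\delta_n$, are never constructed. Without them the lower bound is unproved, and this construction is the entire content of the hard direction, not a routine verification.

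Where your closing assessment goes astray is in claiming that the explicit flagged-Schur bookkeeping of [RW, Sec.\ 4] ``appears unavoidable.'' The paper does not give an in-text proof of Theorem~\ref{thm:operator} (it cites Rhoades--Wilson), but Remark~\ref{rmk:alternative-operator} explains that the implicit operators of Section~\ref{sec:Parabolic}, specialized to $\mu=(1^n)$, supply precisely the construction you leave open, and by much lighter means: Observation~\ref{obs:column-operations} (column operations on a power matrix) together with the elementary block-triangularity computation of Lemma~\ref{lem:augmented-determinant} gives Lemma~\ref{lem:abstract-leading}, which at $\mu=(1^n)$ produces for each $J$ with $1\notin J$ an element $\DDD^\TT_\mu(\delta_n)\in U_n$ whose unique Gale-maximal fermionic monomial is $\theta_J$ with coefficient $\pm f_J\odot\delta_n$ (the weight $\sss(\TT)$ collapses to $1$ in this case, and the formally occurring $d_n$ annihilates $\delta_n$ so it is harmless). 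Combining that lower bound with the upper bound of [RW, Sec.\ 3] closes the Operator Theorem. So the implicit linear-algebra definition of the $\DDD$-operators was designed exactly to replace the computation you deem unavoidable, and it does.
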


Haiman proved \cite{Haiman} an analogous Operator Theorem for $DI_n^\perp$.
    We will only use one direction of the Operator Theorem~\ref{thm:operator}: that $SH_n$ contains $\delta_n$, is closed under higher Euler operators, and is closed under the $\odot$-action of $\FF[\xx_n]$. This is the easier direction, and was proven already by Swanson and Wallach \cite{SW2}. Our methods dramatically simplify the proof of Theorem~\ref{thm:operator} given in \cite{RW}; see Remark~\ref{rmk:alternative-operator}.

\section{The space $\varepsilon_\mu \cdot SR_n$}
\label{sec:Parabolic}

We embark on the task of finding a basis of $\varepsilon_\mu \cdot SR_n$ for a partition $\mu \vdash n$. This result (Lemma~\ref{lem:parabolic-dimension-equality}) will allow us to apply Lemma~\ref{lem:module-antisymmetrization} to deduce the Fields Conjectures.

\subsection{Antisymmetrized monomials} Thanks to the monomial basis $\AAA_n$ of $SR_n$ in Theorem~\ref{thm:artin-basis}, it is easy to bound the dimension of $\varepsilon_\mu \cdot SR_n$ from above. We simply apply $\varepsilon_\mu$ to every element $m \in \AAA_n$ and remove obvious linear dependencies. That is, we remove instances where $\varepsilon_\mu \cdot m = 0$ and if $\varepsilon_\mu \cdot m = \pm \varepsilon_\mu \cdot m'$ for $m,m' \in \AAA_n$, we keep only one of $\varepsilon_\mu \cdot m,  \varepsilon_\mu \cdot m'$. The next pair of definitions set up the relevant combinatorics and will be used heavily throughout this section.

\begin{definition}
    \label{def:signed-partition} Let $n \geq 0$. A {\em signed partition} of $n$ is a pair $(\mu,\gamma)$ of sequences of the same length where $\mu \vdash n$ and $\zero \leq \gamma \leq \mu$. Here $\zero$ is a sequence of zeros and $\leq$ means componentwise inequality. If $(\mu,\gamma)$ is a signed partition of $n$, we define  $J(\mu,\gamma) \subseteq [n]$ by 
    \begin{equation}
        J(\mu,\gamma) := \bigsqcup_{j=1}^p \{ \mu_1 + \cdots + \mu_j - \gamma_j + 1, \dots , \mu_1 + \cdots + \mu_j - 1, \mu_1 + \cdots + \mu_j \}
    \end{equation}
    where $\mu = (\mu_1,\dots,\mu_p)$ and $\gamma = (\gamma_1,\dots,\gamma_p)$.
\end{definition}
%\andy{This is just an idle thought -- would any of the exposition in this section (or maybe later) be clarified by using Fubini words instead of ordered set partitions?} \brendon{I'm not sure; is there someplace in particular you think this would help?}
%\andy{Maybe here? The representatives that survive antisymmetrization have kind of a nice characterization (strictly increasing in each ``part'' of $\mu$). I'd have to think more about whether it actually simplifies Lemma 4.4.} \brendon{Fubini words are `inverse' to ordered set partitions, so in the proof of Lem. 4.4 inserting $\bullet$'s would add a new value to the end of a Fubini word (while `promoting' some previous values) and adding $\circ$'s would append letters with pre-existing values. I find the ordered set partition formulation a little easier to follow in this case, but they're pretty close. In terms of permutations, this amounts to looking at $\mathrm{Des}(w)$ vs $\mathrm{iDes}(w)$, and $\mathrm{iDes}(w)$ shows up a fair amount in `shuffle' symmetric function theory.}
%\andy{Another idle thought -- do we want to consider using interval notation, i.e.\ $[a,b] = \{a, a+1, \dots, b\}$? This definition and Definition 4.6 might be a bit cleaner, but I'm not sure that's enough uses to make it worthwhile.}
%\brendon{I'd lean against it in this case, but it isn't a strong preference.}
%\andy{Sounds good to me!}

An example signed partition of 8 is $(\mu,\gamma)$ where $\mu = (3,3,2)$ and $\gamma = (1,2,0)$. The partition $\mu$ subdivides the set $[8]$ into intervals $(1, \,2, \, 3 \mid 4, \, 5, \, 6 \mid 7,\, 8)$ of sizes $\mu_1,\mu_2,$ and $\mu_3$. The set $J(\mu,\gamma)$ is obtained by selecting the largest $\gamma_j$ entries from the $j^{th}$ interval; we have $J(\mu,\gamma) = \{3,5,6\}$ in our case. Also, we have $\stair(J(\mu,\gamma)) = (1,2,2 \mid 3,3,3 \mid 4 ,5 ).$ The bars in $\stair(J(\mu,\gamma))$ are placed after positions $\mu_1, \mu_1 + \mu_2,$ and so on to remind the reader of the role played by $\mu$. Our next definition attaches a set of monomials in $\FF[\xx_n]$ to any signed partition of $n$.

\begin{definition}
    \label{def:antisymmetrized-artin} Let $(\mu,\gamma)$ be a signed partition of $n$. We define $\AAA_n(\mu,\gamma)$ to be the following set of monomials in $\FF[\xx_n]$:
    \begin{equation}
        \AAA_n(\mu,\gamma) := \left\{ x_1^{a_1} \cdots x_n^{a_n} \,:\, \begin{array}{c} \text{$a_i < \stair(J(\mu,\gamma))_i$ \text{ for all $i$},} \\ \text{$a_{\mu_1 + \cdots + \mu_{j-1} + 1} < \cdots < a_{\mu_1 + \cdots + \mu_j - \gamma_j}$ for all $j$, and} \\
        \text{$a_{\mu_1 + \cdots + \mu_j - \gamma_j + 1} \leq \cdots \leq a_{\mu_1 + \cdots + \mu_j}$ for all $j$}\end{array} \right\}.
    \end{equation}
%    where the inequality $(a_1,\dots,a_n) \leq \stair(J(\mu,\gamma))$ is componentwise.
% I deleted the above sentense
%    \satoshi{Shouldn't the bound be $\leq \stair(J(\mu,\gamma))-(1,1,\dots,1)$?} \brendon{Good catch!}
    We write $\varepsilon_\mu \cdot (\AAA_n(\mu,\gamma) \cdot \theta_{J(\mu,\gamma)})$ for the collection of superspace elements 
    \begin{equation}
        \varepsilon_\mu \cdot (\AAA_n(\mu,\gamma) \cdot \theta_{J(\mu,\gamma)}) := \{ \varepsilon_\mu \cdot (x_1^{a_1} \cdots x_n^{a_n} \cdot \theta_{J(\mu,\gamma)}) \,:\, x_1^{a_1} \cdots x_n^{a_n} \in \AAA_n(\mu,\gamma) \}.
    \end{equation}
\end{definition}

Continuing our example of $\mu = (3,3,2)$ and $\gamma = (1,2,0)$, we have $J(\mu,\gamma) = \{3,5,6\}$ and the $J(\mu,\gamma)$-staircase is $\stair(J(\mu,\gamma)) = (1,2,2\mid3,3,3\mid4,5)$. Definition~\ref{def:antisymmetrized-artin} gives
\begin{footnotesize}
\[ \AAA_n(\mu,\gamma) = \left\{ x_2 \cdot x_3, x_2  \right\} \otimes \left\{ 
\begin{array}{c} x_4^2 \cdot  x_5^2   x_6^2, \, x_4^2 \cdot x_5 x_6^2 , \, x_4^2 \cdot x_5 x_6, \, x_4^2 \cdot x_6^2, \, x_4^2 \cdot x_6, \, x_4^2, \\
x_4 \cdot  x_5^2   x_6^2, \, x_4 \cdot x_5 x_6^2 , \, x_4 \cdot x_5 x_6, \, x_4 \cdot x_6^2, \, x_4 \cdot x_6, \, x_4, \\
1 \cdot  x_5^2   x_6^2, \, 1 \cdot x_5 x_6^2 , \, 1 \cdot x_5 x_6, \, 1 \cdot x_6^2, \, 1 \cdot x_6, \, 1
\end{array} 
\right\} \otimes \left\{  \begin{array}{c} 
x_7^3 x_8^4, \, x_7^2 x_8^4, \, x_7 x_8^4, \\
 x_8^4, \, x_7^2 x_8^3, \, x_7 x_8^3, \\
x_8^3, \, x_7 x_8^2, \, x_8^2, \, x_8 
\end{array} \right\}. \]
\end{footnotesize}

\noindent
The tensor symbols correspond to the partition of $\{x_1,\dots,x_8\}$ induced by $\mu$ and mean to take all possible products of  monomials in the first, second, and third sets. The dots correspond to the factorization of an interval $\{x_{\mu_1 + \cdots + \mu_{j-1} + 1}, \dots, x_{\mu_1 + \cdots + \mu_{j-1} + \mu_j} \}$ into the two subintervals
$\{x_{\mu_1 + \cdots + \mu_{j-1} + 1}, \dots, x_{\mu_1 + \cdots +  \mu_j - \gamma_j} \}$ and 
$\{x_{\mu_1 + \cdots + \mu_j - \gamma_j + 1}, \dots, x_{\mu_1 + \cdots + \mu_j} \}$.

\begin{lemma}
    \label{lem:parabolic-spanning}
    For any partition $\mu \vdash n$, the collection of superspace elements
    \begin{equation}
    \label{eq:parabolic-spanning-set}
        \bigsqcup_{\zero \leq \gamma \leq \mu} \varepsilon_\mu \cdot (\AAA_n(\mu,\gamma) \cdot \theta_{J(\mu,\gamma)})
    \end{equation}
    descends to a spanning set of $\varepsilon_\mu \cdot SR_n$.
\end{lemma}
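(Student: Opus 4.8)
The plan is to push the monomial basis $\AAA_n$ of $SR_n$ from Theorem~\ref{thm:artin-basis} through the antisymmetrizer $\varepsilon_\mu$ and discard the linear dependences that result. Since $\AAA_n$ descends to a basis of $SR_n$, the images of $\{\varepsilon_\mu \cdot m \,:\, m \in \AAA_n\}$ span $\varepsilon_\mu \cdot SR_n$, so it suffices to show that for each $m = x_1^{a_1}\cdots x_n^{a_n} \cdot \theta_J \in \AAA_n$ the element $\varepsilon_\mu \cdot m$ lies in the span of \eqref{eq:parabolic-spanning-set}. Write $[n] = B_1 \sqcup \cdots \sqcup B_p$ for the decomposition into consecutive intervals of sizes $\mu_1,\dots,\mu_p$, so that $\symm_\mu = \symm_{B_1} \times \cdots \times \symm_{B_p}$, and set $\gamma_j := |J \cap B_j|$; then $\zero \leq \gamma \leq \mu$. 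The one tool I would use repeatedly is the identity $\varepsilon_\mu \cdot (\sigma \cdot v) = \sign(\sigma) \cdot \varepsilon_\mu \cdot v$ for $\sigma \in \symm_\mu$ and $v \in \Omega_n$.

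First I would choose $\sigma = \sigma_1 \cdots \sigma_p$ with each $\sigma_j \in \symm_{B_j}$ carrying $J \cap B_j$ onto the largest $\gamma_j$ positions of $B_j$, so that $\sigma(J) = J(\mu,\gamma)$; then $\sigma \cdot m = \pm\, x^b \cdot \theta_{J(\mu,\gamma)}$ with $b = \sigma(a)$ a block‑preserving rearrangement of $a$, and the identity gives $\varepsilon_\mu \cdot m = \pm\, \varepsilon_\mu \cdot (x^b\, \theta_{J(\mu,\gamma)})$. Next, a transposition $\rho \in \symm_{B_j}$ of two positions of the lower part $B_j \setminus J(\mu,\gamma)$ fixes $\theta_{J(\mu,\gamma)}$, so $\varepsilon_\mu \cdot (x^{\rho(b)} \theta_{J(\mu,\gamma)}) = \sign(\rho)\, \varepsilon_\mu \cdot (x^b \theta_{J(\mu,\gamma)})$; a transposition $\tau \in \symm_{B_j}$ of two positions of the upper part $J(\mu,\gamma) \cap B_j$ negates $\theta_{J(\mu,\gamma)}$, and this cancels the sign $\sign(\tau) = -1$ from the identity, so $\varepsilon_\mu \cdot (x^{\tau(b)} \theta_{J(\mu,\gamma)}) = \varepsilon_\mu \cdot (x^b \theta_{J(\mu,\gamma)})$. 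Hence $\varepsilon_\mu \cdot (x^b \theta_{J(\mu,\gamma)})$ vanishes if two exponents of $b$ agree on some block's lower part, and otherwise equals $\pm\, \varepsilon_\mu \cdot (x^{b'} \theta_{J(\mu,\gamma)})$, where $b'$ is obtained by sorting each block's lower exponents strictly increasingly and its upper exponents weakly increasingly --- precisely the monotonicity conditions in Definition~\ref{def:antisymmetrized-artin}.

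It then remains to check that $x^{b'} \in \AAA_n(\mu,\gamma)$, i.e.\ that the staircase inequalities persist: $b'_i < \stair(J(\mu,\gamma))_i$ for all $i$. This is the step I expect to be the real obstacle; everything else is formal manipulation of the antisymmetrizer together with sign bookkeeping (keeping straight that a transposition in $\symm_\mu$ has sign $-1$ and acts on $\theta_{J(\mu,\gamma)}$ by $+1$ or $-1$ according to whether it moves lower or upper positions of a block). Using $\stair(J')_i = |[i] \setminus J'|$, fix a block $B_j$, put $\beta := \mu_1 + \cdots + \mu_{j-1}$ and $K := |[\beta] \setminus J| = \beta - (\gamma_1 + \cdots + \gamma_{j-1})$, a quantity depending only on $(\mu,\gamma)$ and hence shared by $J$ and $J(\mu,\gamma)$. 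The hypothesis $a_i < \stair(J)_i$ then says that the exponent at the $s$‑th non‑$J$ position of $B_j$ is $< K + s$, while the exponent at any $J$‑position of $B_j$ is $< K + (\mu_j - \gamma_j)$. After the rearrangement the non‑$J$ exponents occupy the lower positions $\beta + 1, \dots, \beta + (\mu_j - \gamma_j)$ and the $J$‑exponents occupy the upper positions; since $\stair(J(\mu,\gamma))_i = K + (\mu_j - \gamma_j)$ at every upper position of $B_j$, the upper exponents are automatically in range, and since at least $s$ of the non‑$J$ exponents of $B_j$ are $\leq K + s - 1$, the $s$‑th smallest of them is $\leq K + s - 1 < K + s = \stair(J(\mu,\gamma))_{\beta + s}$, so the sorted lower exponents are in range too. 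Therefore $x^{b'} \in \AAA_n(\mu,\gamma)$, and $\varepsilon_\mu \cdot m$, being $\pm\, \varepsilon_\mu \cdot (x^{b'} \theta_{J(\mu,\gamma)})$ (or $0$), lies in \eqref{eq:parabolic-spanning-set}, which completes the argument.
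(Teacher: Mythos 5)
Your proof is correct and follows the same approach as the paper's, which simply asserts that the spanning set \eqref{eq:parabolic-spanning-set} is obtained from $\{\varepsilon_\mu \cdot m \,:\, m \in \AAA_n\}$ by removing zeros and sign-duplicates without verifying it. You fill in the genuinely nonobvious step that the paper leaves implicit: after applying the block-preserving permutation carrying $J$ to $J(\mu,\gamma)$ and sorting each block's lower exponents strictly and upper exponents weakly, the resulting monomial $x^{b'}$ still obeys the staircase bound $b'_i < \stair(J(\mu,\gamma))_i$, via the clean observation that $K = \lvert [\beta] \setminus J\rvert = \lvert [\beta] \setminus J(\mu,\gamma)\rvert$ depends only on $(\mu,\gamma)$ and so the two staircases agree up to the start of block $B_j$.
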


\begin{proof}
    Theorem~\ref{thm:artin-basis} guarantees that $\{ \varepsilon_\mu \cdot m \,:\, m \in \AAA_n \}$ descends to a  spanning set of $\varepsilon_\mu \cdot SR_n$. The set \eqref{eq:parabolic-spanning-set} is obtained from $\{ \varepsilon_\mu \cdot m \,:\, m \in \AAA_n \}$ by 
    \begin{enumerate}
        \item removing instances where $\varepsilon_\mu \cdot m = 0$, and
        \item when $\varepsilon_\mu \cdot m = \pm \varepsilon_\mu \cdot m' \neq 0$  for $m,m' \in \AAA_n$, keeping only one of $\varepsilon_\mu \cdot m, \varepsilon_\mu \cdot m'$.
    \end{enumerate}
    The lemma follows.
\end{proof}

The main goal of this section is to show that the spanning set of Lemma~\ref{lem:parabolic-spanning} is actually a basis. Since the Fields Conjecture 2 predicts an isomorphism $SR_n \cong \FF[\OP_n] \otimes \sign$ of ungraded $\symm_n$-modules, the next lemma shows that this is a worthy endeavor.

\begin{lemma}
    \label{lem:A-combinatorial-interpretation}
    For any partition $\mu \vdash n$ we have 
    \begin{equation}
    \label{eq:A-and-OP}
        \sum_{\zero \leq \gamma \leq \mu} \# \AAA_n(\mu,\gamma) = \dim_\FF \varepsilon_\mu \cdot (\FF[\OP_n ] \otimes \sign).
    \end{equation}
\end{lemma}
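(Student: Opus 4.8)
The plan is to prove~\eqref{eq:A-and-OP} by exhibiting explicit combinatorial bijections on both sides, reducing everything to a count of ordered set partitions. The right-hand side should be understood via Lemma~\ref{lem:module-antisymmetrization}: we have $\dim_\FF \varepsilon_\mu \cdot (\FF[\OP_n] \otimes \sign) = e_\mu^\perp \Frob(\FF[\OP_n] \otimes \sign)$. Since tensoring with $\sign$ applies $\omega$ to the Frobenius characteristic, and $\Frob(\FF[\OP_n])$ is the sum over $k$ of $h$-expansions coming from the permutation action of $\symm_n$ on $\OP_{n,k}$, one computes $\Frob(\FF[\OP_n] \otimes \sign) = \sum_{k} \sum_{\text{compositions } \alpha \text{ of } [n] \text{ into } k \text{ parts}} e_{|\alpha_1|} \cdots e_{|\alpha_k|}$, where we sum over ordered set partitions and for each take the product of elementary symmetric functions on the block sizes. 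Applying $e_\mu^\perp$ and using that $\langle e_\lambda, e_\mu \rangle$ counts $0/1$-matrices with row sums $\lambda$ and column sums $\mu$ (equivalently $K_{\lambda', \mu}$ combinatorics via the dual Pieri rule, exactly as in the proof of Lemma~\ref{lem:module-antisymmetrization}), one obtains a concrete count: the right-hand side of~\eqref{eq:A-and-OP} equals the number of ways to choose an ordered set partition $\sigma = (B_1 \mid \cdots \mid B_k)$ of $[n]$ together with a ``compatible'' decoration recording how $\mu$ interacts with the block sizes.

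**First**, I would pin down the right-hand side completely. Concretely, I expect $\dim_\FF \varepsilon_\mu \cdot (\FF[\OP_n] \otimes \sign)$ to equal the number of pairs consisting of an ordered set partition of $[n]$ and a filling of a certain diagram, or — cleaner — to equal the number of ordered set partitions of $[n]$ whose block-size sequence, read as a composition, is ``refined by'' or ``interleaves'' the parts of $\mu$ in a prescribed way. The key identity to isolate is
\begin{equation}
    e_\mu^\perp \left( \sum_{\sigma \in \OP_n} e_{\beta(\sigma)} \right) = \sum_{\zero \leq \gamma \leq \mu} \# \AAA_n(\mu,\gamma),
\end{equation}
where $e_{\beta(\sigma)}$ denotes the product of $e_{|B_i|}$ over the blocks of $\sigma$. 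For the left-hand side of~\eqref{eq:A-and-OP}, I would give a direct count of $\#\AAA_n(\mu,\gamma)$. Recall that $\AAA_n(\mu,\gamma)$ consists of exponent sequences $(a_1,\dots,a_n)$ that are strictly increasing on each ``free'' subinterval (of length $\mu_j - \gamma_j$), weakly increasing on each ``antisymmetrized'' subinterval (of length $\gamma_j$), and bounded by the staircase $\stair(J(\mu,\gamma))_i$. Each such bounded lattice-point count factors as a product over the $p$ intervals, and a standard stars-and-bars / ballot argument turns each factor into a binomial coefficient; summing over $\gamma$ then collapses, via the Vandermonde–Chu convolution, to a manifestly ``ordered-set-partition-like'' count.

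**The cleanest route**, which I would pursue in parallel, is a purely bijective one that sidesteps the generating-function bookkeeping: construct an explicit bijection between $\bigsqcup_{\zero \le \gamma \le \mu} \AAA_n(\mu,\gamma)$ and the set of pairs $(\sigma, \text{data})$ indexing a basis of $\varepsilon_\mu \cdot (\FF[\OP_n] \otimes \sign)$. The monomial $x_1^{a_1}\cdots x_n^{a_n}$ together with its exterior part $\theta_{J(\mu,\gamma)}$ already looks like an ordered set partition: the $\theta$-variables in $J(\mu,\gamma)$ mark ``tied'' positions, and the distinct values among the $a_i$ on the free positions, together with the multiplicities forced on the tied positions, should recover the blocks $B_1 \mid \cdots \mid B_k$ after sorting by exponent value; conjugation/the $\sign$-twist accounts for the column-strict versus row-weak distinction between the two kinds of subintervals. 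I would make this map precise, check it is well-defined (respects the staircase bound $\iff$ the block structure is legitimate), and check it is invertible.

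**The main obstacle** I anticipate is matching the two decorations exactly: on the algebraic side, $e_\mu^\perp$ of a product of $e$'s produces, by iterated dual Pieri, a sum over chains of ``horizontal-strip removals'' — equivalently over column-strict fillings — and I must verify that the parameter $\gamma$ (how many positions in each $\mu$-interval get absorbed into the antisymmetrizer versus remain free) corresponds bijectively to the combinatorial choices in this chain, with the staircase constraint $a_i < \stair(J(\mu,\gamma))_i$ translating precisely into the semistandardness/Gale-order inequalities that the dual Pieri rule imposes. Getting the off-by-one indexing right between ``$a_i < \stair(J)_i$'' and the sizes of the strips is the fiddly heart of the argument; I would anchor it by checking the $\mu = (1^n)$ case (where $\varepsilon_\mu = \varepsilon_n$, the full antisymmetrizer, both sides should collapse to a single clean count) and the $\mu = (n)$ case (where $\varepsilon_\mu$ is trivial and the statement reduces to Theorem~\ref{thm:artin-basis} counting all of $SR_n$ against $\#\OP_n$), and then interpolate.
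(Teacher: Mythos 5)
Your plan is in the right neighborhood, but there are a couple of genuine gaps and two small errors of fact. The paper's proof is considerably more elementary than what you propose: it does not pass through $\Frob$ and $e_\mu^\perp$ at all. Since $\FF[\OP_n]\otimes\sign$ is (up to the sign twist) a permutation module, $\dim_\FF \varepsilon_\mu\cdot(\FF[\OP_n]\otimes\sign)$ is simply the number of $\symm_\mu$-orbits in $\OP_n$, and each orbit has a canonical representative in which the entries of each $\mu$-batch $\{\mu_1+\cdots+\mu_{j-1}+1,\dots,\mu_1+\cdots+\mu_j\}$ appear in increasing order from left to right. Call this set $\OP_n(\mu)$; the paper then builds elements of $\OP_n(\mu)$ batch by batch, recording at stage $j$ how many numbers $\gamma_j$ join existing blocks versus start new singleton blocks, and in this way obtains for each $\gamma$ an explicit product of two binomial coefficients per batch. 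It then observes directly (stars-and-bars) that $\#\AAA_n(\mu,\gamma)$ is the \emph{same} product. There is no Vandermonde--Chu collapse of the sum over $\gamma$ and no Pieri chain; the identity is proved term-by-term in $\gamma$.

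Your proposal never pins down the right-hand side precisely. Saying the count is ``the number of ordered set partitions whose block-size sequence is refined by or interleaves $\mu$'' is not correct and would lead you astray; the correct characterization is the orbit count above, equivalently ordered set partitions in which every $\mu$-batch is increasing. Your symmetric-function route is workable in principle — indeed $\Frob(\FF[\OP_n]\otimes\sign)=\sum_{k}\sum_{\alpha}e_\alpha$ over block-size compositions $\alpha$, and $\langle e_\alpha,e_\mu\rangle$ recovers the orbit count — but your stated pairing $\langle e_\lambda, e_\mu\rangle$ counts nonnegative-integer matrices with prescribed row and column sums, \emph{not} $0/1$-matrices ($0/1$-matrices are $\langle e_\lambda,h_\mu\rangle$); as written this would give the wrong combinatorial model. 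Finally, your two anchoring sanity checks are swapped: when $\mu=(1^n)$ the parabolic $\symm_\mu$ is trivial so $\varepsilon_\mu=\mathrm{id}$ and both sides equal $\#\OP_n$, while $\mu=(n)$ gives the full antisymmetrizer $\varepsilon_n$ and an orbit count of $2^{n-1}$ (compositions of $n$). These are exactly the cases you would want to use to catch the off-by-one issues you worry about, so getting them backwards is a real hazard.
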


\begin{proof}
    Write $\mu = (\mu_1,\dots,\mu_p)$ and let $\OP_n(\mu)$ be the family of ordered set partitions $\sigma$ of $[n]$ such that the entries $\mu_1 + \cdots + \mu_{j-1} + 1, \mu_1 + \cdots + \mu_{j-1} + 2, \dots, \mu_1 + \cdots + \mu_{j-1} + \mu_j$ appear in increasing order from left to right for $j = 1,\dots,p$. We refer to this set of entries as the $j^{th}$ {\em batch} of numbers in $[n]$.
    It is clear that the right-hand side of \eqref{eq:A-and-OP} equals $\# \OP_n(\mu)$.
%\satoshi{Maybe should we say ``It is clear that the RHS of (4.5) equals to $\# \OP_n(\mu)$" or something like that to clarify what we want to prove?} \brendon{Good suggestion! Done.}

    Starting with the empty ordered set partition with no blocks, an element $\sigma \in \OP_n(\mu)$ may be built up batch by batch as follows. In the first stage, we form a sequence of $\mu_1 - \gamma_1$ singleton sets $(\bullet \mid \cdots \mid \bullet)$ with the entry $\bullet$ for some $0 \leq \gamma_1 \leq \mu_1$ . Then, we add $\gamma_1$ $\circ$'s to these $\mu_1 - \gamma_1$ sets in some fashion (possibly with multiplicity). The number of ways to do this is ${\mu_1 - 1 \choose \gamma_1}$ (note that this quantity is 0 when $\mu_1 = \gamma_1$). Finally, we replace all of the $\bullet$'s and $\circ$'s from left to right with the first batch of numbers $1,2,\dots,\mu_1$. This gives an ordered set partition in $\OP_{(\mu_1)}(\mu_1)$.

    Now suppose $j > 1$ and we are given an ordered set partition \[\sigma = (B_1 \mid \cdots \mid B_s) \in \OP_{\mu_1+\cdots+\mu_{j-1}}(\mu_1, \dots, \mu_{j-1})\]
    with $s$ blocks.
    The stages $1,2,\dots,j-1$ involved in the construction of $\sigma$ have inductively resulted in numbers $\gamma_1,\gamma_2,\dots,\gamma_{j-1}$ which satisfy  $0 \leq \gamma_r \leq \mu_r$ and 
    \[\gamma_1 + \gamma_2 + \cdots + \gamma_{j-1} = \mu_1 + \cdots + \mu_{j-1} - p.\]
    Here $\mu_r - \gamma_r$ is the number of blocks added at stage $r$ for $r = 1,\dots,j-1$.
    We start by adding $\mu_j - \gamma_j$ new singleton blocks with the lone entry $\bullet$ to $\sigma$ for some $0 \leq \gamma_j \leq \mu_j$. Since $\sigma$ has $s$ blocks, the number of ways to do this is 
    \[ {s + \mu_j - \gamma_j \choose \mu_j - \gamma_j} = {(\mu_1 - \gamma_1) + \cdots + (\mu_j - \gamma_j) \choose \mu_j - \gamma_j}. \]
    This yields a figure with $s + \mu_j - \gamma_j$ blocks. Next, we add $\gamma_j$ copies of $\circ$ to these blocks in some fashion (possibly with multiplicity). The number of ways to do this is 
    \[ {s + \mu_j - 1 \choose \gamma_j} =
    {(\mu_1 - \gamma_1) + \cdots + (\mu_{j-1} - \gamma_{j-1}) + \mu_j - 1 \choose \gamma_j}.\]
    Finally, we replace all of the $\bullet$'s and $\circ$'s from left to right with the $j^{th}$ batch of numbers from $[n]$. This gives an ordered set partition in $\OP_{\mu_1 + \cdots 
 + \mu_j}(\mu_1, \dots, \mu_j)$.

    The above enumeration shows that the cardinality of $\OP_n(\mu)$ is 
    \begin{multline}
        \# \OP_n(\mu) = \\ \sum_{\zero \leq \gamma\leq \mu} \left[ \prod_{j=1}^p {(\mu_1 - \gamma_1) + \cdots + (\mu_j - \gamma_j) \choose \mu_j - \gamma_j} \cdot  {(\mu_1 - \gamma_1) + \cdots + (\mu_{j-1} - \gamma_{j-1}) + \mu_j - 1 \choose \gamma_j} \right].
    \end{multline}
   It is not hard to see 
    \begin{equation}
        \# \AAA_n(\mu,\gamma) = \prod_{j=1}^p {(\mu_1 - \gamma_1) + \cdots + (\mu_j - \gamma_j) \choose \mu_j - \gamma_j} \cdot  {(\mu_1 - \gamma_1) + \cdots + (\mu_{j-1} - \gamma_{j-1}) + \mu_j - 1 \choose \gamma_j}
    \end{equation}
    from which the result follows.
\end{proof}

The recursive proof of Lemma~\ref{lem:A-combinatorial-interpretation} is best understood by example. Suppose  $\mu = (5,3,3,3,2) \vdash 16$ and $\gamma = (2,1,3,0,1)$. The construction of one possible ordered set partition in $\OP_{16}$ is shown in Figure~\ref{fig:osp-construction}.
\begin{figure}
\begin{footnotesize}
\begin{align*}
    &\varnothing \\
    &( \bullet \mid \bullet \mid \bullet) \\
    & ( \bullet, \, \circ  \mid \bullet \mid \bullet, \, \circ ) \\
    & (1, \, 2 \mid 3 \mid 4, \, 5) \\
    & (1, \, 2 \mid \bullet \mid  3 \mid 4, \, 5 \mid \bullet ) \\
    & (1, \, 2, \, \circ \mid \bullet \mid  3 \mid 4, \, 5 \mid \bullet ) \\
    & (1, \, 2, \, 6 \mid 7 \mid  3 \mid 4, \, 5 \mid 8 ) \\
    & (1, \, 2, \, 6 \mid 7 \mid  3 \mid 4, \, 5 \mid 8 ) \\
    & (1, \, 2, \, 6 \mid 7 \mid  3, \, \circ, \, \circ \mid 4, \, 5, \, \circ \mid 8 ) \\
    & (1, \, 2, \, 6 \mid 7 \mid  3, \, 9, \, 10 \mid 4, \, 5, \, 11 \mid 8 ) \\
    & (\bullet \mid 1, \, 2, \, 6 \mid 7 \mid  3, \, 9, \, 10 \mid \bullet \mid \bullet \mid  4, \, 5, \, 11 \mid 8 ) \\
    & (\bullet \mid 1, \, 2, \, 6 \mid 7 \mid  3, \, 9, \, 10 \mid \bullet \mid \bullet \mid  4, \, 5, \, 11 \mid 8 ) \\
    & (12 \mid 1, \, 2, \, 6 \mid 7 \mid  3, \, 9, \, 10 \mid 13 \mid 14 \mid  4, \, 5, \, 11 \mid 8 ) \\
    & (12 \mid 1, \, 2, \, 6 \mid 7 \mid  3, \, 9, \, 10 \mid 13 \mid 14 \mid  4, \, 5, \, 11 \mid \bullet  \mid 8 ) \\
    & (12 \mid 1, \, 2, \, 6 \mid 7 \mid  3, \, 9, \, 10 \mid 13 \mid 14 \mid  4, \, 5, \, 11 \mid \bullet, \, \circ  \mid 8 ) \\
    & (12 \mid 1, \, 2, \, 6 \mid 7 \mid  3, \, 9, \, 10 \mid 13 \mid 14 \mid  4, \, 5, \, 11 \mid 15, \, 16  \mid 8 ) 
\end{align*}
\end{footnotesize}
\caption{Constructing an ordered set partition for $\mu = (5,3,3,3,2) \vdash 16$ and $\gamma = (2,1,3,0,1)$.}
\label{fig:osp-construction}
\end{figure}

%\satoshi{I feel that the definition of $\LLL(n,t,k)$ is very mysterious until one understand the proof of Lemma 4.17. I am a bit concerned that this makes a bit hard to follow the proofs even though all the proofs in section 4.2 are elementary. Does it make sense to move the contents of section 2 to section 4.4? See also my comments at Lemma 4.17.} \brendon{I moved the definition of $\LLL(-,-,-)$ until after the $\DDD$-operator section. Does this flow better?}

\subsection{Parabolic $\DDD$-operators}  Lemma~\ref{lem:A-combinatorial-interpretation} shows that antisymmetrizing the basis $\AAA_n$ of $SR_n$ naturally leads to a potentially useful spanning set of $\varepsilon_\mu \cdot SR_n$. In order to show that this spanning set is linearly independent, we need to bound the dimension of $\varepsilon_\mu \cdot SR_n$ from below. To do this, we switch to the inverse system and consider $\varepsilon_\mu \cdot SH_n$.
The following result gives a  source of $\mu$-antisymmetric elements of $SH_n$. We write
\begin{equation}
    \FF[\xx_n]^{\symm_\mu} := \{ f \in \FF[\xx_n] \,:\, w \cdot f = f \text{ for all $w \in \symm_\mu$} \}
\end{equation}
for the $\symm_\mu$-invariant polynomials in $\FF[\xx_n]^{\symm_\mu}$.

\begin{lemma}
    \label{lem:alternating-closure}
    Let $\mu \vdash n$ be a partition of $n$ and let $\FF[\xx_n]^{\symm_\mu} \subset \FF[\xx_n]$ be the subalgebra of $\symm_\mu$-invariant polynomials. The space $\varepsilon_\mu \cdot SH_n$ satisfies the following properties.
    \begin{enumerate}
        \item We have $\delta_n \in \varepsilon_\mu \cdot SH_n$.
        \item The space $\varepsilon_\mu \cdot SH_n$ is closed under the operators $d_1,d_2,\dots,d_{n-1}$, and
        \item The space $\varepsilon_\mu \cdot SH_n$ is closed under the operator $f \odot (-)$ for any $f \in \FF[\xx_n]^{\symm_\mu}$.
    \end{enumerate}
\end{lemma}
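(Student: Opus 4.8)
The three claims mirror the three bullet points of the Operator Theorem~\ref{thm:operator} (in the easy direction, which is all we are allowed to use), so the strategy is to deduce each property of $\varepsilon_\mu \cdot SH_n$ from the corresponding property of $SH_n$ together with compatibility of the relevant operators with the $\symm_n$-action — or at least with the $\symm_\mu$-action. The key structural fact is that $\varepsilon_\mu$ is (up to the scalar $|\symm_\mu|$, which is invertible since $\mathrm{char}\,\FF = 0$) an idempotent-like projector: an element $g \in SH_n$ lies in $\varepsilon_\mu \cdot SH_n$ if and only if $w \cdot g = \sign(w) \, g$ for all $w \in \symm_\mu$, equivalently $\varepsilon_\mu \cdot g = |\symm_\mu| \cdot g$. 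So throughout I will identify $\varepsilon_\mu \cdot SH_n$ with the $\mu$-antisymmetric part of $SH_n$, and it suffices to check that the operators in (2) and (3) preserve $\mu$-antisymmetry and map $SH_n$ into itself.

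\textbf{Claim (1).} The Vandermonde $\delta_n = \prod_{1 \le i < j \le n}(x_i - x_j)$ lies in $SH_n$ by the Operator Theorem. It is totally antisymmetric: $w \cdot \delta_n = \sign(w)\,\delta_n$ for \emph{all} $w \in \symm_n$, in particular for $w \in \symm_\mu$. Hence $\varepsilon_\mu \cdot \delta_n = \sum_{w \in \symm_\mu} \sign(w) (w \cdot \delta_n) = \sum_{w \in \symm_\mu} \sign(w)^2 \,\delta_n = |\symm_\mu| \cdot \delta_n$, so $\delta_n \in \varepsilon_\mu \cdot SH_n$ (rescale by $|\symm_\mu|^{-1} \in \FF$).

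\textbf{Claim (2).} The operators $d_j$ commute with the $\symm_n$-action on $\Omega_n$ — this is immediate from the definition $d_j(f) = \sum_{i=1}^n (\partial^j f/\partial x_i^j)\,\theta_i$, since permuting the variables permutes the summands: for $w \in \symm_n$ one checks $w \cdot d_j(f) = d_j(w \cdot f)$. Therefore $d_j$ commutes with $\varepsilon_\mu$ as operators on $\Omega_n$. Given $g \in \varepsilon_\mu \cdot SH_n$, write $g = \varepsilon_\mu \cdot h$ with $h \in SH_n$; then $d_j(g) = d_j(\varepsilon_\mu \cdot h) = \varepsilon_\mu \cdot d_j(h)$, and $d_j(h) \in SH_n$ by the Operator Theorem, so $d_j(g) \in \varepsilon_\mu \cdot SH_n$.

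\textbf{Claim (3).} This is the one that needs the hypothesis $f \in \FF[\xx_n]^{\symm_\mu}$ rather than full symmetry, and is the main (though still mild) obstacle. For $w \in \symm_n$ and any $f \in \FF[\xx_n]$, the $\odot$-action satisfies $w \cdot (f \odot h) = (w \cdot f) \odot (w \cdot h)$, because $w$ conjugates the operator $\partial f = f(\partial_1,\dots,\partial_n)$ to $(w\cdot f)(\partial_1,\dots,\partial_n) = \partial(w \cdot f)$. Now take $f \in \FF[\xx_n]^{\symm_\mu}$, so $w \cdot f = f$ for $w \in \symm_\mu$. For $g \in \varepsilon_\mu \cdot SH_n$ and $w \in \symm_\mu$ we get $w \cdot (f \odot g) = (w \cdot f) \odot (w \cdot g) = f \odot (\sign(w) g) = \sign(w)\,(f \odot g)$, so $f \odot g$ is again $\mu$-antisymmetric. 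It remains to see $f \odot g \in SH_n$: write $f$ as a linear combination of monomials $x_1^{a_1}\cdots x_n^{a_n}$; each such monomial acts on $\Omega_n$ as a composite of the partial derivatives $\partial/\partial x_i$, and $SH_n$ is closed under each $\partial/\partial x_i$ by the Operator Theorem, hence under any product of them, hence under $f \odot (-)$. Combining, $f \odot g \in \varepsilon_\mu \cdot SH_n$, completing the proof. (The only subtlety worth flagging is bookkeeping of signs when the $\theta$-contractions are involved in $d_j$ versus the purely bosonic $\odot$-action of $f$; but since $f$ has exterior degree $0$, no signs intervene in Claim (3), and in Claim (2) the $\symm_n$-equivariance of $d_j$ already absorbs them.)
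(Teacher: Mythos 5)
Your proof is correct and follows the same strategy as the paper's, which simply states the needed commutation relations $\varepsilon_\mu \cdot d_j(g) = d_j(\varepsilon_\mu \cdot g)$ and $f \odot (\varepsilon_\mu \cdot g) = \varepsilon_\mu \cdot (f \odot g)$ and leaves the verification to the reader; you have supplied that verification, reformulated via the identification of $\varepsilon_\mu \cdot SH_n$ with the $\mu$-antisymmetric part of $SH_n$. The two phrasings are equivalent, and your details ($\symm_n$-equivariance of $d_j$, conjugation $w(\partial f)w^{-1} = \partial(w\cdot f)$, closure of $SH_n$ under $\partial/\partial x_i$) are exactly what is needed.
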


\begin{proof}
    (1) is clear. 
    %\satoshi{I prefer to say ``(1) is clear" rather than repeating the statement.} \brendon{Done.}
    Theorem~\ref{thm:operator} implies that the operators in (2) and (3) preserve $SH_n$. We leave it for the reader to check that
    \begin{equation}
        \varepsilon_\mu \cdot d_j(g) = d_j(\varepsilon_\mu \cdot g) \quad \text{and} \quad f \odot (\varepsilon_\mu \cdot g) = \varepsilon_\mu \cdot (f \odot g)
    \end{equation}
    for all $1 \leq j \leq n-1$, $g \in \Omega_n$, and $f \in \FF[\xx_n]^{\symm_\mu}$. The result follows.
\end{proof}

We use Lemma~\ref{lem:alternating-closure} to build superspace operators $\DDD^\TT_\mu: \Omega_n \to \Omega_n$ for which $\DDD^\TT_\mu(\delta_n) \in \varepsilon_\mu \cdot SH_n$. These operators depend on $\mu$ and another piece of combinatorial data $\TT$ defined as follows.

\begin{definition}
    \label{def:translation-sequences}
    Let $\mu = (\mu_1,\dots,\mu_p) \vdash n$. A {\em $\mu$-translation sequence} is a list $\TT = (T_1,\dots,T_p)$ of sets where 
    \[ T_j \subseteq \{ \mu_1 + \cdots + \mu_{j-1} + 1,\dots,\mu_1 + \cdots + \mu_{j-1} + \mu_j \} \text{ for all $1 \leq j \leq p$.}\]
    We write $\gamma(\TT) := (\# T_1,\dots,\# T_p)$ for the sequence of cardinalities of these sets.
\end{definition}

For example, suppose $\mu = (5,4,4,3) \vdash 16$ so that $p=4$. The partition of $[16]$ induced by $\mu$ is 
\[ ( 1, \, 2, \, 3, \, 4, \, 5 \mid 6, \, 7, \, 8, \, 9 \mid 10, \, 11, \, 12, \, 13 \mid 14, \, 15, \, 16). \]
One possible $\mu$-translation sequence is
\[ \TT = (T_1,T_2,T_3,T_4) \quad \text{where} \quad T_1 = \{1,3,4\}, \, T_2 = \{6,7,8,9\}, \, T_3 = \{ 11, \, 13 \}, \text{ and } T_4 = \varnothing.\]
We have $\gamma(\TT) = (3,4,2,0)$ so that $(\mu,\gamma(\TT))$ is a signed partition. We attach polynomial weights to translation sequences as follows.

\begin{definition}
    \label{def:translation-weight}
    Let $\mu = (\mu_1,\dots,\mu_p) \vdash n$ and let $\TT = (T_1,\dots,T_p)$ be a $\mu$-translation sequence with $\gamma(\TT) = (\gamma_1,\dots,\gamma_p)$. For $1 \leq j \leq p$, write
    $T_j = \{ t^{(j)}_1 < \cdots < t^{(j)}_{\gamma_j} \}.$ The {\em weight} $\sss(\TT) \in \FF[\xx_n]$ of $\TT$ is the product of Schur polynomials 
    \begin{equation}
        \sss(\TT) := \prod_{j=1}^p s_{\nu(T_j)} (x_{\mu_1 + \cdots + \mu_j - \gamma_j + 1},\dots,x_{\mu_1 + \cdots + \mu_j})
    \end{equation}
    where $\nu(T_j)$ is the partition of length $\leq \gamma_j$ given by
  %  \andy{Feel free to remove these last few words. Knowing the length helps me parse the definition.}
    \begin{equation}
        \nu(T_j) := ( \mu_1 + \cdots + \mu_j - \gamma_j + 1 - t^{(j)}_1 \geq \cdots \geq \mu_1 + \cdots + \mu_j - t^{(j)}_{\gamma_j}).
    \end{equation}

\end{definition}

In our example from before Definition~\ref{def:translation-weight}, the polynomial $\sss(\TT) \in \FF[\xx_{16}]$ for $\TT = (T_1,T_2,T_3,T_4)$ is given by
\[ \sss(\TT) = s_{211}(x_3,x_4,x_5) \times s_{\varnothing}(x_6,x_7,x_8,x_9) \times s_1(x_{12},x_{13}) \times 1.\]
%\satoshi{Shouldn't the last $s_2$ be $s_1$?} \brendon{Yes; fixed.}
In general, the degree of $\sss(\TT)$ increases as the sets $T_j \subseteq \{ \mu_1 + \cdots + \mu_{j-1} + 1,\dots,\mu_1 + \cdots + \mu_{j-1} + \mu_j\}$ `translate away from' Gale-maximality, whence the notation.

The operators $\DDD^\TT_\mu: \Omega_n \to \Omega_n$ will be defined implicitly. To set the stage, introduce a new set of commuting variables $\yy = (y_1,y_2,\dots)$. Given $\mu \vdash n$ and $0 \leq r \leq n$, the {\em power matrix} $P_r(\yy,\mu)$ is the $r \times n$ matrix whose $(i,j)$-entry is $y_i^{n-j+1}$. When $n = 8, \mu = (3,3,2),$ and $r = 3$ the matrix $P_r(\yy,\mu)$ is as follows:
\begin{footnotesize}
\[ 
P_r(\yy,\mu) = \left( \begin{array}{ccc|ccc|cc}
    y_1^8 & y_1^7 & y_1^6 & y_1^5 & y_1^4 & y_1^3 & y_1^2 & y_1 \\
    y_2^8 & y_2^7 & y_2^6 & y_2^5 & y_2^4 & y_2^3 & y_2^2 & y_2 \\
    y_3^8 & y_3^7 & y_3^6 & y_3^5 & y_3^4 & y_3^3 & y_3^2 & y_3 \\
    \end{array}
\right).\]
\end{footnotesize}

\noindent
We place vertical bars after columns $\mu_1, \mu_1 + \mu_2$, and so on. The entries of $P_r(\yy,\mu)$ do not depend on $\mu$ and do not involve the variables $\xx_n = (x_1,\dots,x_n)$. 
%\andy{Is it worth ditching $\mu$ from the definition then?}
%\brendon{The presence of $\mu$ helps me somehow since $P_r(\yy,\mu)$ and $F_r(\yy,\mu)$ are connected via column operations which depend on $\mu$.}
%\andy{Makes sense.}
Our second matrix  differs on both counts. For $\mu \vdash n$ and $0 \leq r \leq n$, the {\em factor matrix} $F_r(\yy,\mu)$ has entries
\begin{equation}
    F_r(\yy,\mu)_{i,j} := y_i^{j - \mu_1 - \cdots - \mu_{k-1}} \prod_{m = \mu_1 + \cdots + \mu_k+1}^n (y_i - x_m)
\end{equation}
where $k$ is minimal so that $j-1 \leq \mu_1  +\cdots + \mu_k$. In our example we have $F_r(\yy,\mu) = ( A \mid B \mid C)$ where 
\begin{footnotesize}
\[ 
A = \left( \begin{array}{ccc}
    y_1^3(y_1-x_4)(y_1-x_5) \cdots (y_1-x_8) & y_1^2(y_1-x_4)(y_1-
    x_5) \cdots (y_1-x_8) &
    y_1(y_1-x_4)(y_1-x_5) \cdots (y_1-x_8) \\
    y_2^3(y_2-x_4)(y_2-x_5) \cdots (y_2-x_8) & y_2^2(y_2-x_4)(y_2-x_5) \cdots (y_1-x_8) &
    y_2(y_2-x_4)(y_2-x_5) \cdots (y_2-x_8) \\
    y_3^3(y_3-x_4)(y_3-x_5) \cdots (y_3-x_8) & y_3^2(y_3-x_4)(y_3-x_5) \cdots (y_3-x_8) &
    y_3(y_3-x_4)(y_3-x_5) \cdots (y_3-x_8) \\
    \end{array}
\right),\]
\[
B = \begin{pmatrix}
    y_1^3(y_1-x_7)(y_1-x_8) & y_1^2(y_1 - x_7)(y_1-x_8) & y_1(y_1-x_7)(y_1-x_8) \\
    y_2^3(y_2-x_7)(y_2-x_8) & y_2^2(y_2 - x_7)(y_2-x_8) & y_2(y_2-x_7)(y_2-x_8) \\
    y_3^3(y_3-x_7)(y_3-x_8) & y_3^2(y_3 - x_7)(y_3-x_8) & y_3(y_3-x_7)(y_3-x_8)
\end{pmatrix}, \text{ and }
C = \begin{pmatrix}
    y_1^2 & y_1 \\
    y_2^2 & y_2 \\
    y_3^2 & y_3 
\end{pmatrix}.
\]
\end{footnotesize}
The following simple linear algebra result will unlock the operators $\DDD^\TT_\mu$.

\begin{observation}
    \label{obs:column-operations}
    There exists a lower triangular $n \times n$ matrix $C(\mu)$ with $1$'s on the diagonal and entries in $\FF[\xx_n]^{\symm_\mu}$ such that
    \[ F_r(\yy,\mu) = P_r(\yy,\mu) \cdot C(\mu).\]
\end{observation}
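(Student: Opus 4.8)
The plan is to exhibit $C(\mu)$ column by column, viewing both matrices $P_r(\yy,\mu)$ and $F_r(\yy,\mu)$ as matrices whose columns are vectors of polynomials in $\yy$ (with coefficients in $\FF[\xx_n]$), and to observe that the equation $F_r(\yy,\mu) = P_r(\yy,\mu) \cdot C(\mu)$ simply says that each column of $F_r(\yy,\mu)$ is an explicit $\FF[\xx_n]$-linear combination of the columns of $P_r(\yy,\mu)$. Since the entries of $P_r(\yy,\mu)$ in row $i$ are the monomials $y_i^{n-1}, y_i^{n-2}, \dots, y_i^{0}$, the columns of $P_r(\yy,\mu)$ span (over $\FF[\xx_n]$, evaluated in row $i$) exactly the space of polynomials in $y_i$ of degree $< n$; so the content of the Observation is that each column of $F_r(\yy,\mu)$, entrywise, is a polynomial in $y_i$ of degree $< n$ with coefficients in $\FF[\xx_n]^{\symm_\mu}$, and moreover that the change-of-basis matrix reading off these coefficients is lower triangular with $1$'s on the diagonal.

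Concretely, I would fix a column index $j$ of $F_r(\yy,\mu)$, let $k = k(j)$ be minimal with $j - 1 \leq \mu_1 + \cdots + \mu_k$, and write $a := \mu_1 + \cdots + \mu_{k-1}$, $b := \mu_1 + \cdots + \mu_k$, so that the $(i,j)$-entry of $F_r(\yy,\mu)$ is
\begin{equation}
    y_i^{\,j-a} \prod_{m=b+1}^{n} (y_i - x_m).
\end{equation}
Expanding the product $\prod_{m=b+1}^{n}(y_i - x_m) = \sum_{d=0}^{n-b} (-1)^d \, e_d(x_{b+1},\dots,x_n) \, y_i^{\,n-b-d}$ via elementary symmetric polynomials, the $(i,j)$-entry becomes $\sum_{d=0}^{n-b} (-1)^d e_d(x_{b+1},\dots,x_n) \, y_i^{\,n-b-d+j-a}$. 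Since $n - b - d + j - a$ ranges over $j - a, j - a + 1, \dots, n - a$ but the exponents cannot exceed $n-1$ (one checks $j \le b+1 \le n$ forces the top exponent $n-a$ to be legitimate only after noting the column index $j$ of $P_r$ that it matches; in fact $j \leq \mu_1 + \cdots + \mu_k + 1 = b+1$ gives top exponent $\leq n - a \leq n - 1$ since $a \geq 0$, with equality handled by the $d=0$ term), every exponent appearing is an integer in $\{0,1,\dots,n-1\}$, hence corresponds to a column of $P_r(\yy,\mu)$. Reading off which column: the term $y_i^{\,n-1-\ell}$ is column $\ell+1$ of $P_r(\yy,\mu)$. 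The leading term ($d=0$) is $y_i^{\,n-b-d+j-a} = y_i^{\,n-b+j-a}$, wait — I should instead track that the highest power of $y_i$ in column $j$ of $F_r$ is $y_i^{(j-a) + (n-b)} = y_i^{\,n - a - (b - j)}$, and since $b - j \geq -1$ this is $\leq y_i^{\,n-a+1}$... so more care is needed: the correct normalization is that column $j$ of $F_r$ should match column $j$ of $P_r$ up to lower-column corrections, and the exponent bookkeeping $j - \mu_1 - \cdots - \mu_{k-1}$ together with the degree $n - b$ of the product is arranged precisely so the top exponent equals $n - j$ (matching $P_r$'s column $j$, whose row-$i$ entry is $y_i^{n-j-1}$... ) — in any case the mechanical check is that the exponents decrease by $1$ as $d$ increases, the leading ($d=0$) coefficient is $1$, and the remaining coefficients $\pm e_d(x_{b+1},\dots,x_n)$ lie in $\FF[\xx_n]^{\symm_\mu}$ because $e_d(x_{b+1},\dots,x_n)$ is symmetric in the variables $x_{b+1},\dots,x_n$, which is a union of the blocks $\mu_{k+1},\dots,\mu_p$ of $\mu$ and hence $\symm_\mu$-invariant. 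This simultaneously establishes (i) the lower-triangularity — column $j$ of $F_r$ involves only columns $\geq j$ of $P_r$... one must double-check the triangularity direction and transpose conventions, but the structural point is that the correction terms all have \emph{strictly smaller} power of $y_i$ than the leading term, so they land in strictly later columns of $P_r$, giving an \emph{upper} or \emph{lower} triangular $C(\mu)$ according to the indexing of $P_r$ from high degree to low degree — and (ii) the diagonal $1$'s from the $d=0$ term.

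The main (and only) real obstacle is the indexing bookkeeping: verifying that, with the stated conventions ($P_r$'s $(i,j)$-entry $y_j^{n-i-1}$ — here likely a typo for $y_i^{n-j}$ or $y_i^{n-j-1}$, to be read off the displayed $3\times 8$ example where column $j$ has entries $y_i^{9-j}$, i.e. $y_i^{n+1-j}$ — hmm, the example shows $y_1^8, y_1^7, \dots, y_1$, so column $j$ is $y_i^{n-j+1}$ with $n=8$), the degree $n-b$ of the factor $\prod_{m=b+1}^n(y_i-x_m)$ combined with the monomial prefactor $y_i^{j-a}$ produces a top $y_i$-degree exactly equal to that of column $j$ of $P_r$, so that $C(\mu)$ is genuinely triangular with unit diagonal rather than merely invertible. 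I would resolve this by a direct degree count: top $y_i$-degree of $F_r$ column $j$ is $(j-a) + (n-b) = (j - \mu_1 - \cdots - \mu_{k-1}) + (n - \mu_1 - \cdots - \mu_k)$; using $\mu_1 + \cdots + \mu_{k-1} \le j - 1$ and $j - 1 \le \mu_1 + \cdots + \mu_k$ one bounds this between $n - \mu_k$-ish and $n$, then checks it equals the top $y_i$-degree of $P_r$ column $j$, and that all lower terms of $F_r$ column $j$ have degree matching columns $j+1, j+2, \dots$ of $P_r$ (whose degrees step down by $1$), so $C(\mu)$ is triangular with $1$'s on the diagonal. Everything else — that the coefficients are polynomials and $\symm_\mu$-invariant — is immediate from the $e_d$ expansion above, so no further ingenuity is required.
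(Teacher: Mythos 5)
Your proposal is correct and is essentially the same argument as the paper's: expand each factor $\prod_{m}(y_i - x_m)$ via elementary symmetric polynomials in the terminal variable set, note that these $e_d$'s are $\symm_\mu$-invariant because the terminal variables form a union of $\mu$-blocks, and read off the triangularity and unit diagonal from a degree count in $y_i$. The paper phrases this as ``column operations processing the columns of $P_r$ from left to right'' and illustrates with the $\mu=(3,3,2)$ example; your dual phrasing (each column of $F_r$ as an $\FF[\xx_n]^{\symm_\mu}$-combination of columns of $P_r$) is the same matrix identity. The indexing confusion you encountered is genuinely caused by typos in the paper's formulas for $P_r$ and $F_r$ (the displayed example has $(i,j)$-entry $y_i^{n-j+1}$, not $y_j^{n-i-1}$, and the exponent in $F_r$ should be $\mu_1+\cdots+\mu_k+1-j$ rather than $j-\mu_1-\cdots-\mu_{k-1}$, with $k$ minimal so that $j\le\mu_1+\cdots+\mu_k$); once those are corrected, the top $y_i$-degree of column $j$ of $F_r$ is exactly $n-j+1$, matching $P_r$, and your degree count closes cleanly.
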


    Observation~\ref{obs:column-operations} says that  $P_r(\yy,\mu)$ can be transformed into $F_r(\yy,\mu)$ by a sequence of column operations which add $\FF[\xx_n]^{\symm_\mu}$-multiples of column $j_1$ to column $j_2$ for $j_2 < j_1$. This may be done by processing the columns of $P_r(\yy,\mu)$ from left to right, adding or subtracting multiples of columns lying in subsequent blocks by appropriate elementary symmetric polynomials in `terminal' variable sets of the form $\{ x_{\mu_1 + \cdots + \mu_j + 1},\dots,x_n\}$. In the $\mu = (3,3,2)$ example above, this is done as follows.
    \begin{enumerate}
    \item Subtract $e_1(x_4,\dots,x_8)$ times column 2 from column 1, add $e_2(x_4,\dots,x_8)$ times column 3 to column 1,  subtract $e_3(x_4,\dots,x_8)$ times column 4 from column 1,
     add $e_4(x_4,\dots,x_8)$ times column 5 to column 1, and finally subtract $e_5(x_4,\dots,x_8)$ times column 6 from column 1.
    \item Subtract $e_1(x_4,\dots,x_8)$ times column 3 from column 2, add $e_2(x_4,\dots,x_8)$ times column 4 to column 2, $\dots$ and  subtract $e_5(x_4,\dots,x_8)$ times column 7 from column 2.
    \item Subtract $e_1(x_4,\dots,x_8)$ times column 4 from column 3, add $e_2(x_4,\dots,x_8)$ times column 5 to column 3, $\dots$ and  subtract $e_5(x_4,\dots,x_8)$ times column 8 from column 3.
    \item Subtract $e_1(x_7,x_8)$ times column 5 from column 4, then add $e_2(x_7,x_8)$ times column 6 to column 4.
    \item Subtract $e_1(x_7,x_8)$ times column 6 from column 5, then add $e_2(x_7,x_8)$ times column 7 to column 5.
    \item Subtract $e_1(x_7,x_8)$ times column 7 from column 6, then add $e_2(x_7,x_8)$ times column 8 to column 6.
\end{enumerate}
%\satoshi{I am a bit hesitate to write this as a "proof" because this is an example of a computation although the explanation is very clear and shuold convince the reader. I may feel more comfortable just remove ``begin/end proof" environment} \brendon{This sounds like a good idea. More strongly, maybe we can rename Lemma 4.13 as an ``Observation".}

Let $\TT = (T_1,\dots,T_p)$ be a $\mu$-translation sequence for $\mu  \vdash n$ and consider the disjoint union $T := T_1 \sqcup \cdots \sqcup T_p$. The {\em augmented factor matrix} is the $n \times n$ matrix with block form 
\begin{equation}
    \widetilde{F}(\yy,\mu,\TT) := \begin{pmatrix} F_{\# T}(\yy,\mu) \\ \hline E\end{pmatrix}
\end{equation}
where $E$ is the $(n-\#T) \times n$ row echelon 0,1-matrix with 1's in the columns which are not contained in $T$. If $\mu = (3,3,2) \vdash 8$ and $\TT = ( 2 \mid  4, \, 6 \mid  \varnothing)$, we have $T = \{2,4,6\}$ so that 
\begin{footnotesize}
\[ 
E = \left(\begin{array}{c c c | c c c | c c}
    1 & 0 & 0 & 0 & 0 & 0 & 0 & 0  \\
    0 & 0 & 1 & 0 & 0 & 0 & 0 & 0 \\
    0 & 0 & 0 & 0 & 1 & 0 & 0 & 0 \\
    0 & 0 & 0 & 0 & 0 & 0 & 1 & 0 \\
    0 & 0 & 0 & 0 & 0 & 0 & 0 & 1
\end{array}\right).\]
\end{footnotesize}
 
 \noindent
 For any  $J \subseteq [n]$ with $\#J = \# T$, let $\widetilde{F}(\xx_J,\mu,\TT)$ be the matrix obtained by replacing the $\yy$-variables in $\widetilde{F}(\yy,\mu,\TT)$ by $\{x_j \,:\, j \in J\}$ in some order. We only care about the determinant of $\widetilde{F}(\xx_J,\mu,\TT)$ up to sign, so this order is immaterial. The following result shows that $\det \widetilde{F}(\xx_J,\mu,\TT)$ has favorable vanishing properties. Recall the Gale order $\leq_\Gale$ on subsets of $[n]$ of the same size and the polynomial $f_J \in \FF[\xx_n]$ for $J \subseteq [n]$. Here and hereafter $\doteq$ will denote equality up to sign.

\begin{lemma}
    \label{lem:augmented-determinant} 
    Let $\mu = (\mu_1,\dots,\mu_p) \vdash n$ and let $\TT = (T_1,\dots,T_p)$ be a $\mu$-translation sequence. Let $\gamma := \gamma(\TT)$ and write $T := T_1 \sqcup \cdots \sqcup T_p$.
%    \andy{Do we need a new line here? I removed it for now.}
%    \brendon{I don't think so?}
    For a subset $J \subseteq [n]$ of size $\# T$, let $\PPP_{T,J} \in \FF[\xx_n]$ be defined up to sign by
    \begin{equation}
        \PPP_{T,J} := \det \widetilde{F}(\xx_J,\mu,\TT).
    \end{equation}  
    \begin{enumerate}
        \item If $J \not\leq_\Gale J(\mu,\gamma)$ we have $\PPP_{T,J} = 0$.
        \item The polynomial $\PPP_{T,J(\mu,\gamma)}$ is given by
        \begin{equation}
            \PPP_{T,J(\mu,\gamma)} \doteq f_{J(\mu,\gamma)} \times \sss(\TT).
        \end{equation}
    \end{enumerate}
\end{lemma}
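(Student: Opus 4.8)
The plan is to analyze the determinant $\det \widetilde{F}(\xx_J, \mu, \TT)$ by exploiting the block structure of the augmented factor matrix and the multilinearity of the determinant in its rows. First I would use Observation~\ref{obs:column-operations}: since $\widetilde F(\yy,\mu,\TT)$ has top block $F_{\#T}(\yy,\mu) = P_{\#T}(\yy,\mu)\cdot C(\mu)$ with $C(\mu)$ lower-triangular, unipotent, and with $\FF[\xx_n]^{\symm_\mu}$-entries, and since the bottom block $E$ is a $0,1$-echelon matrix supported on columns outside $T$, right-multiplying the whole augmented matrix by $C(\mu)^{-1}$ changes the determinant only by a unit (it is $1$) and transforms the top block to a power matrix $P_{\#T}(\xx_J,\mu)$ while keeping the bottom block echelon (column operations adding later columns to earlier ones do not disturb the echelon structure of $E$ because the pivot columns of $E$ — those not in $T$ — only receive additions from columns to their right, which may be in $T$ or not, so one must be slightly careful here; the cleaner route is to expand $\det \widetilde F(\xx_J,\mu,\TT)$ along the $E$-rows first using the generalized Laplace expansion). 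So the actual first step is: Laplace-expand $\det \widetilde F(\xx_J,\mu,\TT)$ along the bottom $n-\#T$ rows, which are the standard basis rows $e_c$ for $c \notin T$; this picks out, up to sign, the $\#T \times \#T$ minor of $F_{\#T}(\xx_J,\mu)$ on the column set $T$, i.e. $\PPP_{T,J} \doteq \det\big(F_{\#T}(\xx_J,\mu)_{\bullet,\, T}\big)$.

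Next I would record the explicit shape of that minor. Writing $T = T_1 \sqcup \cdots \sqcup T_p$ with $T_j = \{t^{(j)}_1 < \cdots < t^{(j)}_{\gamma_j}\} \subseteq \{\mu_1+\cdots+\mu_{j-1}+1,\dots,\mu_1+\cdots+\mu_j\}$, the columns of $F_{\#T}(\xx_J,\mu)$ indexed by $T_j$ have common factor $\prod_{m=\mu_1+\cdots+\mu_j+1}^n (y - x_m)$ in each row, and the remaining factor in row $y=x_{j'}$, $j' \in J$, column $t^{(j)}_\ell$ is the monomial $x_{j'}^{\,t^{(j)}_\ell - \mu_1-\cdots-\mu_{j-1}}$. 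So the minor factors as a product over rows of the column-group factors, times a determinant of monomials. For the vanishing statement (1): if $J = \{j_1 < \cdots < j_{\#T}\}$ and $J \not\leq_\Gale J(\mu,\gamma)$, I would argue the monomial determinant vanishes. The key point is that after pulling out the common factors, the minor is a generalized Vandermonde / Schur-type determinant whose column exponents, read in increasing order, are exactly an arrangement of the elements of $J(\mu,\gamma)$ shifted by block offsets; the standard fact (this is essentially the bialternant formula \eqref{eq:bialternant-formula} run backwards, or a direct alternating-in-$J$ argument) is that an alternating polynomial in the variables $\{x_j : j \in J\}$ of low enough degree in each block, whose "exponent content" is dominated by $J(\mu,\gamma)$ in Gale order, must vanish unless $J \leq_\Gale J(\mu,\gamma)$. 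Concretely I would peel off blocks from the right: in the last block the only monomials available have exponents $\leq \mu_p$ smaller than forced, and an alternating polynomial in the corresponding variables whose degree is too small is $0$; induct leftward.

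For part (2), with $J = J(\mu,\gamma)$, I would compute the minor on the nose. By Definition~\ref{def:signed-partition}, $J(\mu,\gamma)$ consists, in block $j$, of the top $\gamma_j$ entries $\{\mu_1+\cdots+\mu_j-\gamma_j+1,\dots,\mu_1+\cdots+\mu_j\}$, and I would match the rows of the minor (variables $x_i$, $i\in J(\mu,\gamma)$) to blocks accordingly. Grouping rows by block, the minor becomes block-lower-triangular (rows from block $j$, when plugged into columns from block $j' > j$, give the factor $\prod_{m > \mu_1+\cdots+\mu_{j'}}(x_i - x_m)$ which is nonzero, but the lower-triangularity comes from the monomial-degree constraint making the upper blocks vanish — this is exactly the Gale-maximality being tight); more precisely the determinant factors, up to sign, as $\prod_{j=1}^p$ of (the product of the common column-group factors over the $\gamma_j$ rows of block $j$) times (a $\gamma_j \times \gamma_j$ bialternant in $x_{\mu_1+\cdots+\mu_j-\gamma_j+1},\dots,x_{\mu_1+\cdots+\mu_j}$). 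Applying \eqref{eq:bialternant-formula}, each such bialternant equals the Vandermonde $\prod_{i<i'}(x_{?}-x_{?'})$ in those variables times the Schur polynomial $s_{\nu(T_j)}$ of Definition~\ref{def:translation-weight}, since the exponent sequence $t^{(j)}_\ell - \mu_1-\cdots-\mu_{j-1}$ is precisely $\nu(T_j) + (\gamma_j-1,\dots,0)$ after reindexing. Multiplying the Vandermondes and the surviving $(y-x_m)$ factors across all blocks reassembles exactly $f_{J(\mu,\gamma)}$ (matching the definition $f_J = \prod_{j\in J}(x_j \prod_{j<i\le n}(x_j-x_i))$, where the factor $x_j$ in $f_J$ comes from the lowest column in each block being $x_j^1$ rather than $x_j^0$), and the product of Schur factors is $\sss(\TT)$, giving $\PPP_{T,J(\mu,\gamma)} \doteq f_{J(\mu,\gamma)} \times \sss(\TT)$.

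The main obstacle I expect is bookkeeping: correctly lining up the three layers of offsets — the block offsets $\mu_1+\cdots+\mu_{j-1}$ built into $F_r$, the "terminal product" factors $\prod_{m>\mu_1+\cdots+\mu_j}(y-x_m)$, and the Gale-order condition defining $J(\mu,\gamma)$ — so that the exponent sequence appearing in each block's minor is exactly $\nu(T_j)$ plus a staircase, and so that the surviving linear factors assemble to $f_{J(\mu,\gamma)}$ with the right powers of $x_j$. I would handle this by first doing the single-block case $p=1$ carefully (where the statement is just the bialternant formula together with the Steinberg-type vanishing of low-degree alternants), then bootstrapping to general $p$ via the block-triangular structure, with the $\not\leq_\Gale$ vanishing in (1) handled block-by-block from the right as indicated above.
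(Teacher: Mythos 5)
Your overall blueprint — reduce via Laplace expansion along the $E$-rows to the $\#T\times\#T$ minor $\overline F(\xx_J,\mu,\TT)$ on the $T$-columns, then analyze that minor as a block bialternant — is exactly the paper's route. But the mechanism you invoke for the vanishing in (1) and for block-triangularity in (2) is wrong, and that gap is structural rather than bookkeeping.

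The issue is that you want to ``pull out the common factors'' $\prod_{m>\mu_1+\cdots+\mu_k}(y-x_m)$ from the columns of block $k$ and then reason about the residual monomial determinant by degree. This cannot be done: the factor depends on \emph{both} the row variable $y = x_{j'}$ and the column's block $k$, so it is neither a column factor nor a row factor of the minor, and it does not come out of the determinant. Crucially, for $j' > \mu_1+\cdots+\mu_k$ this factor contains $(x_{j'}-x_{j'})$ and is identically zero — it is a \emph{structural zero} of the entry, not a low-degree term. The correct argument for (1), and for the block-triangularity in (2), is this support observation: a row $j'\in J$ is identically zero in every column of a block $k$ with $\mu_1+\cdots+\mu_k < j'$. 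If $J\not\leq_\Gale J(\mu,\gamma)$, the resulting pattern of zeros, after row permutation, forms a southwest block meeting the main diagonal, which kills the determinant outright; and when $J = J(\mu,\gamma)$, the same observation is what makes the $T$-minor block upper-triangular with diagonal block sizes $\gamma_1,\dots,\gamma_p$. Your ``alternating polynomial of low degree must vanish'' and ``monomial-degree constraint making the upper blocks vanish'' explanations are not just vague — they point at the wrong reason, and a careful write-up following them would stall. (You actually state the right mechanism in passing for the $\{3,5,6\}$ example — ``vanishes for support reasons'' — but your formal argument for (1) does not use it.)

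A second, smaller point on (2): once you correctly have the block-triangular factorization, the exponent bookkeeping still needs the factor $x_j$ for each $j\in J(\mu,\gamma)$ to be pulled from the rows \emph{before} you invoke \eqref{eq:bialternant-formula}; the ``raw'' alternant in block $j$ is $\delta\cdot\prod_i x_i\cdot s_{\nu(T_j)}$, not $\delta\cdot s_{\nu(T_j)}$, so if you go straight to the bialternant without first accounting for those $x_j$'s you will land on the wrong partition (off by $(1^{\gamma_j})$). Your claimed identification ``$t^{(j)}_\ell - \mu_1-\cdots-\mu_{j-1}$ is $\nu(T_j)+(\gamma_j-1,\dots,0)$'' does not match Definition~\ref{def:translation-weight} once this $x_j$ factor is included; compare against the $\mu=(3,3,2)$, $T_2=\{4,6\}$ worked example to see the discrepancy. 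These are fixable, but they are not mere notation — without them the Schur factor you extract is not $\sss(\TT)$.
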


\begin{proof}
    This proof makes use of the bialternant formula \eqref{eq:bialternant-formula} for the Schur polynomial.
     We have  
    \[ \det \widetilde{F}(\xx_J,\mu,\TT) \doteq \det \overline{F}(\xx_J,\mu,\TT)\]
    where $\overline{F}(\xx_J,\mu,\TT)$ is the $\# T \times \# T$ matrix formed by the columns in the upper block of $\widetilde{F}(\xx_J,\mu,\TT)$ indexed by $T$.
    If $J \not\leq_\Gale J(\mu,\gamma)$, after rearranging rows  the matrix $\overline{F}(\xx_J,\mu,\TT)$ has the form 
    \[ \begin{pmatrix} {\bf *} & {\bf *} \\ {\bf 0} & {\bf *}
    \end{pmatrix}\]
    where the southwest block of zeros intersects the main diagonal. Any such matrix has zero determinant.

    On the other hand (after possibly rearranging rows), the determinant $\PPP_{T,J(\mu,\gamma)}$ is block upper triangular  with diagonal block sizes $\gamma(\TT) = (\gamma_1,\dots,\gamma_p)$. We may factor out an $(x_j - x_i)$ from the columns of this determinant for each $j < i$ such that $j \in T$ and $i$ lies in a block after that of $j$; these are among factors of the polynomial $f_{J(\mu,\gamma)}$. We may also factor out an $x_j$ for each $j \in J(\mu,\gamma)$ from the rows of this determinant; these are other factors of $f_{J(\mu,\gamma)}$. The diagonal block determinants are alternants. Their Vandermonde factors account for the remaining factors of $f_{J(\mu,\gamma)}$.\footnote{These have the form $(x_j -x_i)$ where $j$ lies in $T$, and $i > j$ lies in the same block as $j$.} The bialternant formula shows that what remains is the product $\sss(\TT)$ of Schur polynomials corresponding to $\TT$.
\end{proof}

We continue our running example of $\mu = (3,3,2)$ and $\TT = (T_1,T_2,T_3)$ with $T_1 = \{2\}, T_2 = \{4,6\},$ and $T_3 = \varnothing$. Then $\gamma = (\gamma_1,\gamma_2,\gamma_3) = (1,2,0)$ and $J(\mu,\gamma) = \{3,5,6\}$. We have
\begin{footnotesize}
\[ \PPP_{T,J} \doteq
\begin{vmatrix}
    x_{j_1}^2(x_{j_1} - x_4) \cdots (x_{j_1} - x_8) & x_{j_1}^3(x_{j_1}-x_7)(x_{j_1} - x_8) & x_{j_1}(x_{j_1}-x_7)x_{j_1} - x_8) \\
    x_{j_2}^2(x_{j_2} - x_4) \cdots (x_{j_2} - x_8) & x_{j_2}^3(x_{j_2}-x_7)(x_{j_2} - x_8) & x_{j_2}(x_{j_2}-x_7)(x_{j_2} - x_8) \\
    x_{j_3}^2(x_{j_3} - x_4) \cdots (x_{j_3} - x_8) & x_{j_3}^3(x_{j_3}-x_7)(x_{j_3} - x_8) & x_{j_3}(x_{j_3}-x_7)(x_{j_3} - x_8) 
\end{vmatrix}
\]
\end{footnotesize}

\noindent
for $J = \{ j_1 < j_2 < j_3 \} \subseteq [8]$. This determinant vanishes for support reasons unless $J \leq_\Gale \{3,5,6\}$. Furthermore, we have 
\begin{footnotesize}
\begin{align*} \PPP_{T,356} &\doteq
\begin{vmatrix}
    x_{3}^2(x_{3} - x_4) \cdots (x_{3} - x_8) & x_{3}^3(x_{3}-x_7)(x_{3} - x_8) & x_{3}(x_{3}-x_7)(x_{3} - x_8) \\
    0 & x_{5}^3(x_{5}-x_7)(x_{5} - x_8) & x_{5}(x_{5}-x_7)(x_{5} - x_8) \\
    0 & x_{6}^3(x_{6}-x_7)(x_{6} - x_8) & x_{6}(x_{6}-x_7)(x_{6} - x_8) 
\end{vmatrix} \\
&=  \begin{vmatrix} 
 x_{3}^2(x_{3} - x_4) \cdots (x_{3} - x_8)
\end{vmatrix} \cdot \begin{vmatrix}
    x_{5}^3(x_{5}-x_7)(x_{5} - x_8) & x_{5}(x_{5}-x_7)(x_{5} - x_8) \\
    x_{6}^3(x_{6}-x_7)(x_{6} - x_8) & x_{6}(x_{6}-x_7)(x_{6} - x_8) 
\end{vmatrix} \\
&=  \frac{f_{356}}{(x_5-x_6)} \times \begin{vmatrix} x_3 \end{vmatrix} \cdot \begin{vmatrix} x_5^2 & 1 \\ x_6^2 & 1 \end{vmatrix} \\
&=  f_{356} \times s_1(x_3) \cdot s_1(x_5,x_6) \\
&= f_{J(\mu,\gamma)} \times \sss(\TT).
\end{align*}
\end{footnotesize}

Given a partition $\mu \vdash n$ and a $\mu$-translation sequence $\TT = (T_1,\dots,T_p)$ with $\#(T_1 \sqcup \cdots \sqcup T_p) = r$, let $C(\mu)$ be a $n \times n$ matrix over $\FF[\xx]^{\symm_\mu}$ as in Observation~\ref{obs:column-operations}. Since $C(\mu)$ is lower triangular with 1's on the diagonal, it has an inverse $C(\mu)^{-1}$ which is also defined over $\FF[\xx_n]^{\symm_n}$ and lower unitriangular. We have the matrix equation
 \begin{equation}
        \widetilde{F}(\yy,\mu,\TT) \cdot C(\mu)^{-1} = \begin{pmatrix}
            F_r(\yy,\mu) \\ \hline E
        \end{pmatrix} \cdot C(\mu)^{-1} = 
        \begin{pmatrix}
            P_r(\yy,\mu) \\ \hline H
        \end{pmatrix}
\end{equation}
where $H := E \cdot C(\mu)^{-1}$ is an $(n-r) \times n$ matrix defined over $\FF[\xx_n]^{\symm_n}$ which depends on $\TT$. We use $H$ to define our operators $\DDD^\TT_\mu$. Given a subset $K \subseteq [n]$, let $K^* := \{n-k+1 \,:\, k \in K\}$. If $K = \{k_1 < \cdots < k_r \}$ we write 
\begin{equation}
    d_K := d_{k_1} \cdots d_{k_r}: \Omega_n \longrightarrow \Omega_n
\end{equation}
for the corresponding composition of higher Euler operators.

\begin{defn}
    \label{def:D-operators}
    Let $\mu \vdash n$, let $\TT = (T_1,\dots,T_p)$ be a $\mu$-translation sequence with $\# T_1 + \cdots + \# T_p = r$, and let $H$ be as above. We define an operator $\DDD_\mu^\TT$ on $\Omega_n$ by the formula
     \begin{equation}
        \DDD_\mu^\TT: \Omega_n \longrightarrow \Omega_n, \quad \quad \DDD^\TT_\mu(f) := \sum_{\# I = n-r} (-1)^{\sum I} \Delta_I(H) \odot d_{([n]-I)^*}(f).
    \end{equation}
    Here $\sum I := \sum_{i \in I} i$ and $\Delta_I(H)$ denotes the maximal minor of $H$ with columns indexed by $I$.
\end{defn}

The operators $\DDD^\TT_\mu$ depend on the matrix $H$, and therefore {\em a priori} on the matrix $C(\mu)$ of Observation~\ref{obs:column-operations}. It is not hard to see that $C(\mu)$ is uniquely determined; this (non)dependence will play no role and we leave it implicit.

\begin{lemma}
    \label{lem:abstract-leading}
    Let $\mu = (\mu_1,\dots,\mu_p) \vdash n$, let $\TT = (T_1,\dots,T_p)$ be a $\mu$-translation sequence, and let 
    $\gamma := \gamma(\TT)$.
%    \andy{Removing a new line again.}
%    \brendon{This is reasonable.}
    If $1 \notin T_1$, then $\DDD^\TT_\mu(\delta_n) \in \varepsilon_\mu \cdot SH_n$ is nonzero and has unique Gale-maximal fermionic monomial $\theta_{J(\mu,\gamma)}$. The coefficient of $\theta_{J(\mu,\gamma)}$ in $\DDD^\TT_\mu(\delta_n)$ is 
    \[ \pm (\sss(\TT) \times f_{J(\mu,\gamma)}) \odot \delta_n.\]
\end{lemma}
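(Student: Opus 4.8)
The plan is to unwind the implicit definition of $\DDD_\mu^\TT$ and show that applying it to $\delta_n$ amounts, up to sign and up to a Laplace-type expansion along the rows of $\widetilde F(\yy,\mu,\TT)$, to the operator $\PPP_{T,J}\odot(-)$ summed over $J$, after which Lemma~\ref{lem:augmented-determinant} identifies the Gale-maximal term. First I would record that $\DDD_\mu^\TT(\delta_n)\in\varepsilon_\mu\cdot SH_n$: by Lemma~\ref{lem:alternating-closure}(1) we have $\delta_n\in\varepsilon_\mu\cdot SH_n$, and each $d_{([n]-I)^*}$ is a composition of higher Euler operators while each $\Delta_I(H)\odot(-)$ is the $\odot$-action of a polynomial whose entries, hence minors, lie in $\FF[\xx_n]^{\symm_\mu}$ (they come from $H=E\cdot C(\mu)^{-1}$ and $C(\mu)$ is defined over $\FF[\xx_n]^{\symm_\mu}$ by Observation~\ref{obs:column-operations}); so Lemma~\ref{lem:alternating-closure}(2),(3) keep us inside $\varepsilon_\mu\cdot SH_n$.

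The analytic heart is a bookkeeping identity. I would introduce an auxiliary set of variables $\yy=(y_1,\dots,y_r)$ with $r=\#T$, and observe that $d_K(\delta_n)$ for $K=\{k_1<\cdots<k_r\}$ produces, on the fermionic side, the monomials $\theta_L$ with $L=\{\ell_1<\cdots<\ell_r\}$, and on the bosonic side, a signed sum over such $L$ of the polynomial obtained from $\delta_n$ by applying $\partial_{\ell_1}^{k_1}\cdots\partial_{\ell_r}^{k_r}$. The key combinatorial fact about the Vandermonde is that $\partial_{\ell}^{m}$ acting on $\delta_n$ inserts the power $y^{\,n-1-m}$ in the $\ell$-slot up to the usual symmetrization, so that summing over $K=([n]-I)^*$ weighted by $(-1)^{\sum I}\Delta_I(H)$ exactly performs the cofactor (Laplace) expansion of $\det\widetilde F(\xx_L,\mu,\TT)$ along its bottom block $H$: namely $\sum_I(-1)^{\sum I}\Delta_I(H)\,\Delta_{[n]-I}(P_r(\xx_L,\mu))\doteq\det\widetilde F(\xx_L,\mu,\TT)=\PPP_{T,L}$, using $\widetilde F=\binom{P_r}{H}$ in the second displayed matrix equation of the excerpt. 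Carrying the $\theta$-signs through, this shows
\[
\DDD_\mu^\TT(\delta_n)\;\doteq\;\sum_{\#L=r}\bigl(\PPP_{T,L}\odot\delta_n\bigr)\cdot\theta_L
\]
up to an overall sign and an overall scalar coming from the Vandermonde derivative normalization, which is where the factor $(\sss(\TT)\times f_{J(\mu,\gamma)})\odot\delta_n$ in the statement will come from.

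Granting that identity, the conclusion is immediate from Lemma~\ref{lem:augmented-determinant}: part~(1) says $\PPP_{T,L}=0$ unless $L\leq_\Gale J(\mu,\gamma)$, so the only candidate Gale-maximal fermionic monomial is $\theta_{J(\mu,\gamma)}$; part~(2) says its coefficient is $\PPP_{T,J(\mu,\gamma)}\odot\delta_n\doteq(f_{J(\mu,\gamma)}\times\sss(\TT))\odot\delta_n$, which is the asserted value. Nonvanishing of $\DDD_\mu^\TT(\delta_n)$ then reduces to nonvanishing of $(f_{J(\mu,\gamma)}\times\sss(\TT))\odot\delta_n$; here the hypothesis $1\notin T_1$ enters, since it guarantees $1\notin J(\mu,\gamma)$, so that $\deg(f_{J(\mu,\gamma)}\times\sss(\TT))\le\deg\delta_n$ with the relevant monomial actually appearing — I would check this by a degree/support count, or alternatively cite Steinberg's Theorem~\ref{thm:steinberg} to see $f_{J(\mu,\gamma)}\times\sss(\TT)\notin I_n$ (equivalently that it does not annihilate $\delta_n$) via the basis $\AAA_n(J(\mu,\gamma))$ of Theorem~\ref{thm:colon-ideal-basis}. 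The main obstacle I anticipate is purely in the second paragraph: correctly tracking the interleaving of the exterior-algebra signs from $d_{([n]-I)^*}$, the sign $(-1)^{\sum I}$, and the sign conventions in the Laplace expansion of $\det\widetilde F$, so that everything collapses to a single $\doteq$; the content is routine once the signs are organized, e.g. by fixing the convention $d_K=d_{k_1}\cdots d_{k_r}$ with $k_1<\cdots<k_r$ and matching it to the standard cofactor signs.
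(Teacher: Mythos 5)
Your overall plan is the paper's plan: membership in $\varepsilon_\mu\cdot SH_n$ via Lemma~\ref{lem:alternating-closure}, identifying the coefficient of $\theta_J$ in $\DDD^\TT_\mu(\delta_n)$ with $\PPP_{T,J}\odot\delta_n$ up to sign by a generalized Laplace expansion of $\det\binom{P_r(\xx_J,\mu)}{H}$ and the unitriangularity of $C(\mu)$, then invoking Lemma~\ref{lem:augmented-determinant} to kill the coefficients off the Gale interval and to evaluate the top one. That all matches, and the signs are indeed a routine bookkeeping exercise exactly as you anticipate.

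The one place you go slightly wrong is in locating where the hypothesis $1\notin T_1$ is used. You say it matters because it forces $1\notin J(\mu,\gamma)$, and you suggest a ``degree/support count'' as a first route to nonvanishing. That is not enough: the condition $1\notin J(\mu,\gamma)$ is equivalent to $\gamma_1<\mu_1$, which still allows $1\in T_1$ with $T_1\subsetneq\{1,\dots,\mu_1\}$, and in that case $\DDD^\TT_\mu(\delta_n)$ genuinely vanishes even though $f_{J(\mu,\gamma)}\notin I_n$ and the degree bound holds. The real use of $1\notin T_1$ is the strictly stronger fact that it forces $\nu(T_1)_1<\mu_1-\gamma_1$, which is what guarantees that every monomial of $\sss(\TT)$ fits under the staircase $\stair(J(\mu,\gamma))$ and hence lies in $\AAA_n(J(\mu,\gamma))$. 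From there Theorem~\ref{thm:colon-ideal-basis} gives $\sss(\TT)\notin(I_n:f_{J(\mu,\gamma)})$, equivalently $\sss(\TT)\times f_{J(\mu,\gamma)}\notin I_n$, and Steinberg's Theorem~\ref{thm:steinberg} converts this into $(\sss(\TT)\times f_{J(\mu,\gamma)})\odot\delta_n\neq 0$. Your second suggested alternative is this very argument; you just need to replace the ``since it guarantees $1\notin J(\mu,\gamma)$'' justification with the sharper observation about $\nu(T_1)_1$.
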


\begin{proof}
    Let $T = T_1 \sqcup \cdots \sqcup T_p$ and let $r := \# T$. 
   Since the matrix $H$ in Definition~\ref{def:D-operators} is defined over $\FF[\xx_n]^{\symm_n}$, Lemma~\ref{lem:alternating-closure} furnishes the membership 
    \begin{equation}
    \DDD^\TT_\mu(\delta_n) \in \varepsilon_\mu \cdot SH_n. 
    \end{equation}
    We have the chain of up-to-sign equalities:
    \begin{align}
        \text{coefficient of $\theta_J$ in } \DDD_\mu^\TT (\delta_n) &= \text{coefficient of $\theta_J$ in  } \sum_{\# I = n-r} (-1)^{\sum I} \Delta_I(H) \odot d_{([n]-I)^*}(\delta_n) \\
        &\doteq \left[ \det \left( \begin{array}{c}  P_r(\xx_J,\mu) \\ \hline H \end{array} \right) \right] \odot \delta_n \\
        &= \left[ \det \left( \begin{array}{c}  F_r(\xx_J,\mu) \\ \hline E \end{array} \right) \right] \odot \delta_n \\
        &= \det \widetilde{F}(\xx_J,\mu,\TT) \odot \delta_n \\
        &\doteq \PPP_{T,J} \odot \delta_n
    \end{align}
where $\PPP_{T,J}$ is as in Lemma~\ref{lem:augmented-determinant}.
The first $=$ is the definition of $\DDD^\TT_\mu$, the next $\doteq$ comes from the definition of the higher Euler operators, the next $=$ follows because the matrix $C(\mu)$ in Observation~\ref{obs:column-operations} is lower unitriangular, the next $=$ is the definition of $\widetilde{F}(\xx_J,\mu,\TT)$, and the last $\doteq$ is the definition of $\PPP_{T,J}$. Lemma~\ref{lem:augmented-determinant} implies that the coefficient of $\theta_J$ in $\DDD^\TT_\mu(\delta_n)$ vanishes unless $J \leq_\Gale J(\mu,\gamma)$. Furthermore, if $J = J(\mu,\gamma)$, Lemma~\ref{lem:augmented-determinant} yields
\begin{equation}
\label{eq:target-coefficient}
    \text{coefficient of $\theta_{J(\mu,\gamma)}$ in } \DDD_\mu^\TT(\delta_n) \doteq (\sss(\TT) \times f_{J(\mu,\gamma)}) \odot \delta_n \in \FF[\xx_n].
\end{equation}

It remains to show that the polynomial \eqref{eq:target-coefficient} is nonzero. This requires the hypothesis $1 \notin T_1$. Steinberg's Theorem~\ref{thm:steinberg} implies 
\begin{align*}
    (\sss(\TT) \times f_{J(\mu,\gamma)}) \odot \delta_n \neq 0 \quad &\Leftrightarrow \quad \sss(\TT) \times f_{J(\mu,\gamma)} \notin I_n \\
    &\Leftrightarrow \quad \sss(\TT) \notin (I_n : f_{J(\mu,\gamma)}).
\end{align*}
Since $1 \notin T_1$ we have $f_{J(\mu,\gamma)} \notin I_n$ by \cite[Lem. 4.7]{RW}. Writing 
\begin{equation}
    \sss(\TT) = \prod_{j=1}^p s_{\nu(T_j)}(x_{\mu_1 + \cdots + \mu_j - \gamma_j + 1}, \dots, x_{\mu_1 + \cdots + \mu_j})
\end{equation}
as in Definition~\ref{def:translation-weight}, the condition $1 \notin T_1$ implies $\nu(T_1)_1 < \mu_1 - \gamma_1$. One has
\[
\stair(J(\mu,\gamma))= (1,2,\dots,\mu_1-\gamma_1,\dots,\mu_1-\gamma_1 \mid \mu_1-\gamma_1+1,\dots,\mu_1-\gamma_1+\mu_2-\gamma_2,\dots,\mu_1-\gamma_1+\mu_2-\gamma_2\mid \cdots).
\]
%\satoshi{It might help the readers if we add info on other $\nu(T_j)$ and $\Stir(J(\mu,\gamma))$? For example: as in Definition~\ref{def:translation-weight}, we have $\nu(T_j)_1 \leq \mu_j - \gamma_j$ and the condition $1 \notin T_1$ implies $\nu(T_1)_1 < \mu_1 - \gamma_1$. Recall 
%{\Small
%\[
%\Stir(J(\mu,\gamma))=\{1,2,\dots,\mu_1-\gamma_1,\dots,\mu_1-\gamma_1 \mid \mu_1-\gamma_1+1,\dots,\mu_1-\gamma_1+\mu_2-\gamma_2,\dots,\mu_1-\gamma_1+\mu_2-\gamma_2\mid \cdots).
%\]
%}
%}
%\brendon{Does this look reasonable?}
It follows that every monomial in the expansion of $\sss(\TT)$ lies in the set $\AAA_n(J(\mu,\gamma))$, and Theorem~\ref{thm:colon-ideal-basis} implies $\sss(\TT) \notin (I_n : f_{J(\mu,\gamma)}).$
\end{proof}

If $1 \in T_1$, the proof of Lemma~\ref{lem:abstract-leading} breaks down in one of two ways.
\begin{itemize}
    \item If $T_1 = \{1,2,\dots,\mu_1\}$ then $1 \in J(\mu,\gamma)$ and \cite[Lem. 4.7]{RW} implies $f_{J(\mu,\gamma)} \in I_n$ so that $(I_n : f_{J(\mu,\gamma)}) = \FF[\xx_n]$ is the unit ideal.
    \item If $T_1$ is a proper subset of $\{1,2,\dots,\mu_1\}$ of size $\# T_1 = \gamma_1$ then one has $\nu(T_1)_1 = \mu_1 - \gamma_1$. The power $x_{\mu_1}^{\mu_1 - \gamma_1}$ appears in the monomial expansion of $\sss(\TT)$, so the monomial expansion of $\sss(\TT)$ contains terms which do not lie in $\AAA_n(J(\mu,\gamma))$.
\end{itemize}

In their proof of the Fields Conjecture (1), Rhoades--Wilson introduced certain explicit $\DDD$-operators \cite[Def. 4.1]{RW}. Our operators $\DDD^\TT_\mu$ are more abstract and, when $\mu = (1^n)$, lead to a simpler proof of the Operator Theorem~\ref{thm:operator}; see Remark~\ref{rmk:alternative-operator}.

%In their proof of the Fields Conjecture (1), Rhoades--Wilson introduced certain explicit $\DDD$-operators in \cite[Def. 4.1]{RW}. The $\DDD$-operators of \cite{RW} come from minors of a matrix whose entries are complete homogeneous symmetric polynomials in partial variable sets. The discovery of these operators was a hard-fought battle involving flagged skew Schur polynomials; see \cite[Rmk. 4.4]{RW}. These explicit $\DDD$-operators were proven, via complicated calculations, to satisfy a triangularity property \cite[Lem. 4.8]{RW} analogous to Lemma~\ref{lem:abstract-leading} in the special case $\mu = (1^n)$. In our setting, this difficult process of conjecture and computation is replaced by the easy linear algebra result of Observation~\ref{obs:column-operations}, resulting in `abstract' operators $\DDD^\TT_\mu$ which still satisfy crucial triangularity properties. It may be interesting to try this new paradigm to produce interesting elements of $I^\perp$ for other  bihomogeneous superspace ideals $I \subseteq \Omega_n$.
%\satoshi{I am a bit concerned that this paragraph might be hard to understand unless one is familiar with [35]. If we just want to say that this simplify the proof of [35], maybe combine this with Remark 4.18?} \brendon{This sounds like a good idea to me.}

\subsection{The set of polynomials $\LLL(m,k,t)$}

%\satoshi{Below is my suggestion of the change for the proof of Lemma 4.19.}

Let $(\mu,\gamma)=((\mu_1,\dots,\mu_p),(\gamma_1,\dots,\gamma_p))$ be a signed partition of $n$.
The definition of $\sss(\TT)$ says that, for any $\mu$-translation sequence $\TT$ with $\gamma(\TT)=\gamma$ and $1 \not \in T_1$,
\[
\sss(\TT) = \prod_{j=1}^p s_{\nu_j} (x_{\mu_1+ \cdots+\mu_{j}-\gamma_j+1},\dots,x_{\mu_1+\cdots+\mu_j})
\]
satisfies 
\begin{align}
    \nu_1 \subseteq ( (\mu_1-\gamma_1-1)^{\gamma_1}) \ \ \mbox{ and } \ \ \nu_j \subseteq ( (\mu_j-\gamma_j)^{\gamma_j}) \ \mbox{ for $j=2,\dots,p$,}
\end{align}
and any such $\nu_1,\dots,\nu_p$ can appear.
This fact and Lemma \ref{lem:abstract-leading} give the following lower bound of the dimension of $\varepsilon_\mu \cdot SH_n$.

\begin{lemma}
\label{lem:additionbysatoshi}
Let $\mu$ be a partition of $n$. For every $\bf 0 \leq \gamma \leq \mu$,
let
$R_{\mu,\gamma}$ be the $\FF[\xx_n]^{\symm_\mu}$-submodule of $\FF[\xx_n]/(I_n:f_{J(\mu,\gamma)})$ generated by
\[
Z_{\mu,\gamma}= \left\{ \prod_{j=1}^p s_{\nu_j} (x_{\mu_1+\cdots+\mu_j-\gamma_j+1},\dots,x_{\mu_1+\cdots+\mu_j}) \,:\, 
\begin{array}{l}
\nu_1 \subseteq ( ( \mu_1-\gamma_1-1)^{\gamma_1})\mbox{ and } \\
  \nu_j \subseteq ( ( \mu_j-\gamma_j)^{\gamma_j}) \mbox{ for } j=2,3,\dots,p 
\end{array} \right\}.
\]
Then we have
\[
\dim_\FF \varepsilon_\mu \cdot SH_n \geq \sum_{\bf 0 \leq \gamma \leq \mu} \dim_\FF R_{\mu,\gamma}.
\]
\end{lemma}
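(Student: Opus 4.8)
The plan is to produce, for each fixed $\gamma$ with $\zero \le \gamma \le \mu$, a collection of elements of $\varepsilon_\mu \cdot SH_n$ whose fermionic leading terms are the monomial $\theta_{J(\mu,\gamma)}$, and whose leading coefficients span a subspace of $\FF[\xx_n]/(I_n:f_{J(\mu,\gamma)})$ of dimension $\dim_\FF R_{\mu,\gamma}$. Since distinct $\gamma$ give distinct fermionic degrees of $\theta_{J(\mu,\gamma)}$ — and since elements with distinct fermionic leading monomials are automatically linearly independent — summing these bounds over all $\gamma$ yields the claimed inequality. Concretely, I would first fix $\gamma$ and let $R_{\mu,\gamma}$ be generated as an $\FF[\xx_n]^{\symm_\mu}$-module by $Z_{\mu,\gamma}$ as in the statement; choose elements $g_1,\dots,g_N \in R_{\mu,\gamma}$ whose images in $\FF[\xx_n]/(I_n:f_{J(\mu,\gamma)})$ form a basis, where $N = \dim_\FF R_{\mu,\gamma}$.

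Next I would realize each $g_i$ as a leading coefficient. Write $g_i = \sum_m h_{i,m} \cdot z_m$ where the $z_m \in Z_{\mu,\gamma}$ and $h_{i,m} \in \FF[\xx_n]^{\symm_\mu}$. Each $z_m$ is of the form $\prod_{j=1}^p s_{\nu_j}(x_{\mu_1+\cdots+\mu_j-\gamma_j+1},\dots,x_{\mu_1+\cdots+\mu_j})$ with $\nu_1 \subseteq ((\mu_1-\gamma_1-1)^{\gamma_1})$ and $\nu_j \subseteq ((\mu_j-\gamma_j)^{\gamma_j})$ for $j \ge 2$. By the remark preceding the lemma, there is a $\mu$-translation sequence $\TT_m$ with $\gamma(\TT_m) = \gamma$, $1 \notin (T_m)_1$, and $\sss(\TT_m) = z_m$. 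Form the superspace element
\[
  G_i := \sum_m h_{i,m} \odot \DDD^{\TT_m}_\mu(\delta_n) \in \varepsilon_\mu \cdot SH_n,
\]
using Lemma~\ref{lem:alternating-closure}(3) to stay inside $\varepsilon_\mu \cdot SH_n$ (each $h_{i,m}$ is $\symm_\mu$-invariant). By Lemma~\ref{lem:abstract-leading}, each $\DDD^{\TT_m}_\mu(\delta_n)$ has unique Gale-maximal fermionic monomial $\theta_{J(\mu,\gamma)}$ with coefficient $\doteq (z_m \times f_{J(\mu,\gamma)})\odot\delta_n$; applying $h_{i,m}\odot(-)$ and summing, the coefficient of $\theta_{J(\mu,\gamma)}$ in $G_i$ is $\doteq (g_i \times f_{J(\mu,\gamma)})\odot \delta_n$ (possibly up to a global sign issue to be absorbed), while no fermionic monomial strictly Gale-above $\theta_{J(\mu,\gamma)}$ appears. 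I would then argue that the $G_i$ for $i=1,\dots,N$ are linearly independent: a nontrivial relation $\sum_i c_i G_i = 0$ forces, by extracting the $\theta_{J(\mu,\gamma)}$-coefficient, $(\sum_i c_i g_i)\times f_{J(\mu,\gamma)} \in I_n$, i.e. $\sum_i c_i g_i \in (I_n:f_{J(\mu,\gamma)})$, contradicting the choice of the $g_i$ as a basis of $R_{\mu,\gamma}$.

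Finally, I would collect the bases $\{G_i\}$ obtained across all $\gamma$ with $\zero \le \gamma \le \mu$. Since $\theta_{J(\mu,\gamma)}$ has fermionic degree $|J(\mu,\gamma)| = \gamma_1+\cdots+\gamma_p$, which can in principle coincide for two different $\gamma$, I cannot separate by fermionic degree alone; instead I would use the finer grading by the fermionic monomial and a Gale-order argument. Order all relevant $\theta_J$ that arise by any linear extension of Gale order; each $G_i$ coming from a given $\gamma$ has fermionic support contained in $\{\theta_J : J \le_\Gale J(\mu,\gamma)\}$ with $\theta_{J(\mu,\gamma)}$ actually appearing. A standard triangularity argument then shows the union of all the $\{G_i\}$ is linearly independent in $\Omega_n$, hence in $SH_n$, giving $\dim_\FF \varepsilon_\mu \cdot SH_n \ge \sum_{\zero\le\gamma\le\mu} N_\gamma = \sum_{\zero\le\gamma\le\mu}\dim_\FF R_{\mu,\gamma}$.

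The main obstacle I anticipate is the triangularity/linear-independence bookkeeping in the last step: one must verify carefully that the $G_i$ associated to a fixed $\gamma$ truly have $\theta_{J(\mu,\gamma)}$ as their Gale-maximal fermionic term (not merely as \emph{a} term), and that no cancellation across different $\gamma$ can occur — this relies on Lemma~\ref{lem:abstract-leading}'s vanishing statement ($\theta_J$-coefficient $=0$ unless $J \le_\Gale J(\mu,\gamma)$) together with the fact that distinct $\gamma$ give incomparable or strictly-below values of $J(\mu,\gamma)$ in the relevant sense. A secondary technical point is confirming that $g_i \mapsto (g_i\times f_{J(\mu,\gamma)})\odot\delta_n$ is genuinely injective on $R_{\mu,\gamma}$, which is exactly Steinberg's Theorem~\ref{thm:steinberg} combined with the definition of the colon ideal, so this should go through cleanly.
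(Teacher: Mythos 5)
Your proposal is correct and follows essentially the same route as the paper's own proof: apply $\symm_\mu$-invariant $\odot$-multiples of the operators $\DDD^\TT_\mu(\delta_n)$ to produce elements of $\varepsilon_\mu\cdot SH_n$ whose unique Gale-maximal fermionic monomial is $\theta_{J(\mu,\gamma)}$ with leading coefficient $(g\cdot f_{J(\mu,\gamma)})\odot\delta_n$, then use Steinberg's Theorem to transport linear independence from $\FF[\xx_n]/(I_n:f_{J(\mu,\gamma)})$ to $SH_n$. The cross-$\gamma$ bookkeeping you flag resolves exactly as you anticipate --- each element is fermionically homogeneous of degree $|J(\mu,\gamma)|$ with support in the Gale-downset of $J(\mu,\gamma)$, so a Gale-maximal-extraction argument within each fermionic degree separates distinct $\gamma$ (the paper compresses this to the remark that $\theta_{J(\mu,\gamma)}$ differs for different $\gamma$).
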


\begin{proof}
Since the space $\varepsilon_\mu \cdot SH_n$ is closed under the operation $f \odot (-) $ for any $f \in \FF[\xx_n]^{\symm_\mu}$,
by Lemma \ref{lem:abstract-leading}, for any non-zero polynomial $g \in \FF[\xx_n]^{\symm_\mu} \cdot Z_{\mu,\gamma}$ with $(g\times f_{J(\mu,\gamma)}) \odot \delta_n \ne 0$, there is an element $\eta \in \varepsilon_\mu \cdot SH_n$ that has the unique Gale-maximal fermionic monomial $\theta_{J(\mu,\gamma)}$ with coefficient
\[
(g \times f_{J(\mu,\gamma)}) \odot \delta_n.
\]
Steinberg's Theorem~\ref{thm:steinberg} implies that a set 
 $\{g_1,\dots,g_m\}$ of polynomials is linearly independent in $\FF[\xx_n]/(I_n:f_{J(\mu,\gamma)})$ if and only if \[\{(g_1\times f_{J(\mu,\gamma)})\odot \delta_n,\dots,(g_m\times f_{J(\mu,\gamma)})\odot \delta_n\}\] is linearly independent in $\FF[\xx_n]$.
Consequently, the above construction gives $\dim_\FF R_{\mu,\gamma}$ linearly independent elements of $\varepsilon_\mu \cdot SH_n$ which have the unique Gale-maximal fermionic monomial $\theta_{J(\mu,\gamma)}$.
Since $\theta_{J(\mu,\gamma)}$ differs for different $\gamma$, this proves the desired lower bound.
\end{proof}

In our running example of $\mu=(3,3,2)$ and $\gamma=(1,2,0)$,
the module $R_{\mu,\gamma}$ is generated by the following polynomials
\[
\{s_1(x_3),1\}
\cdot
\{s_{11}(x_5,x_6),s_{1}(x_5,x_6),1\} \cdot \{1\}
\]
as an $\FF[\xx_8]^{\symm_\mu}$-submodule,
where the notation $\{-\}\cdot \{-\}$ means to multiply a polynomial in the first set with a polynomial in the second set in all possible ways.

We will prove  $\dim_\FF R_{\mu,\gamma} = \# \AAA_n(\mu,\gamma)$ in the next subsection. To do this we introduce the following set of polynomials in $\FF[\xx_m]$ for $m \geq 0$.

%We need to consider the fermionic leading terms of $\DDD^\TT_\mu(\delta_n)$ arising in Lemma~\ref{lem:abstract-leading} in more detail.  To this end, we introduce the following set of polynomials in $\FF[\xx_m]$ for $m \geq 0$.

\begin{definition}
    \label{defn:L-polynomials}
    Let $m,k,t \geq 0$ satisfy $k \leq m$. Define  $\LLL(m,k,t) \subseteq \FF[\xx_m]$ by
\begin{equation}
    \LLL(m,k,t) :=  \left\{
        e_\lambda(x_1,\dots,x_m)\cdot s_\nu(x_{m-k+1},\dots,x_m) \,:\,
        \lambda \subseteq (m^\chi) \text{ and } \nu \subseteq ((m-k)^k)
%\begin{array}{l}
%\lambda \subseteq (m^\chi) \text{ and } \\
%\nu \subseteq ((m-k)^k)
%\end{array}     
%\begin{tikzpicture}[scale = 0.5, baseline=0.15cm]
%            \draw[thick] (0,0) rectangle (2,1);
%            \draw[thick] (7,0) rectangle (9,1);
%            \node at (6,0.5) {$\nu \subseteq $};
%            \node at (-1,0.5) {$\lambda \subseteq $};
%            \node at (1,-0.3) {{\begin{scriptsize}$m$\end{scriptsize}}};
%            \node at (8,-0.3) {{\begin{scriptsize}$m-k$\end{scriptsize}}};
%             \node at (2.3,0.5) {{\begin{scriptsize}$\chi$\end{scriptsize}}};
%            \node at (9.3,0.5) {{\begin{scriptsize}$k$\end{scriptsize}}};
%            \node at (4,0.5) {and};
%\end{tikzpicture} 
\right\}
\end{equation}
where
\begin{equation}
    \chi = \begin{cases}
        t & \nu_1 < m-k, \\
        t-1 & \nu_1 = m-k.
    \end{cases}
\end{equation}
The elementary symmetric polynomial $e_\lambda$ is in the full variable set $\{x_1,\dots,x_m\}$ while the Schur polynomial $s_\nu$ is in the last $k$ variables $\{x_{m-k+1},\dots,x_m\}$.
\end{definition}

When $t = 0$ one has $\LLL(m,k,0)=\{s_\nu(x_{m-k+1},\dots,x_m) \,:\, \nu \subseteq ((m-k-1)^k)\}$.
%\satoshi{Maybe add a remark about $t=0$ case?:
%We note that $\LLL(m,n,0)=\{s_\nu(x_{m-k+1},\dots,x_m): \nu \subseteq ((m-k-1)^k)\}$.}
Roughly speaking, the polynomials in $\LLL(m,k,t)$  come from a single interval of $\mu_j = m$ indices arising from a partition $\mu \vdash n$; see Lemma~\ref{lem:A-equals-L}. The Schur polynomials $s_\nu(x_{m-k+1}, \dots,x_m)$ arising in Definition~\ref{defn:L-polynomials} should be compared with those in Lemmas~\ref{lem:abstract-leading} and~\ref{lem:additionbysatoshi}. 
We aim to show that  $\LLL(m,k,t)$ is linearly independent in $\FF[\xx_m]$. For this we use the following well-known result.

\begin{lemma}
    \label{lem:polynomial-ring-basis}
    Let $\BBB$ be any set of homogeneous polynomials in $\FF[\xx_m]$  which descends to a basis of the coinvariant ring $\FF[\xx_m]/I_m$. Then 
    \[ \{ e_1^{q_1} e_2^{q_2} \cdots e_m^{q_m} \times f \,:\, f \in \BBB, \, \, q_1, q_2,\dots,q_m \geq 0 \} \]
    is a basis of $\FF[\xx_m]$.
\end{lemma}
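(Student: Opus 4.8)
The plan is to exploit the well-known structure of $\FF[\xx_m]$ as a free module over the subalgebra of symmetric polynomials $\FF[\xx_m]^{\symm_m} = \FF[e_1,\dots,e_m]$. By the Chevalley–Shephard–Todd theorem (equivalently, the classical theory behind \eqref{eq:intro-artin-monomials}), $\FF[\xx_m]$ is a free $\FF[e_1,\dots,e_m]$-module of rank $m!$, and any set $\BBB$ of homogeneous polynomials that descends to an $\FF$-basis of the coinvariant ring $\FF[\xx_m]/I_m$ is in fact a free module basis of $\FF[\xx_m]$ over $\FF[e_1,\dots,e_m]$. First I would recall why this is so: if $\BBB = \{f_1,\dots,f_{m!}\}$ descends to a basis of $\FF[\xx_m]/I_m$, then the $\FF[e_1,\dots,e_m]$-submodule $M$ they generate surjects onto $\FF[\xx_m]$ — one sees this by a standard graded Nakayama / induction-on-degree argument, lifting any homogeneous $g$ modulo $I_m$ to a combination of the $f_i$ and subtracting, leaving a homogeneous element of $I_n = (e_1,\dots,e_m)$ of the same degree, hence in $\sum_i \FF[e_1,\dots,e_m]_{+}\cdot f_i$ after another lift, and iterating. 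Then a Hilbert series comparison shows the surjection $\bigoplus_{i} \FF[e_1,\dots,e_m]\cdot f_i \twoheadrightarrow \FF[\xx_m]$ is an isomorphism: both sides have Hilbert series $\frac{\sum_i q^{\deg f_i}}{\prod_{d=1}^m (1-q^d)}$, using that $\sum_i q^{\deg f_i} = \Hilb(\FF[\xx_m]/I_m;q) = [m]!_q$.

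Given freeness, the second step is immediate: since $\{e_1,\dots,e_m\}$ is algebraically independent, the monomials $\{e_1^{q_1}\cdots e_m^{q_m} : q_i \geq 0\}$ form an $\FF$-basis of $\FF[e_1,\dots,e_m] = \FF[\xx_m]^{\symm_m}$. Tensoring this basis with the module basis $\BBB$ then gives the $\FF$-basis
\[
\{e_1^{q_1}\cdots e_m^{q_m}\times f : f \in \BBB,\ q_1,\dots,q_m \geq 0\}
\]
of $\FF[\xx_m]$, which is exactly the claimed statement.

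The only genuine content is the freeness claim, and the main obstacle — such as it is — is being careful with the graded Nakayama argument: one must ensure the induction on polynomial degree actually terminates, which it does because $I_n$ is generated in positive degrees, so each "lift and subtract" step strictly decreases the degree of the remaining error term. Everything else is a Hilbert series bookkeeping check. In the write-up I would state this as a short lemma with a two-line proof citing the structure theorem for coinvariant algebras, since this fact is entirely standard and the excerpt already invokes Artin's basis and Chevalley's theorem.
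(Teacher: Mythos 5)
Your proof is correct and rests on the same foundation as the paper's: both arguments hinge on the regularity of the sequence $e_1,\dots,e_m$ (equivalently, the freeness of $\FF[\xx_m]$ as a graded $\FF[e_1,\dots,e_m]$-module). The paper phrases this as an iterated nonzerodivisor slicing lemma rather than invoking the free-module structure theorem directly, but the mathematical content and the level of detail needed to justify it (graded Nakayama plus Hilbert series bookkeeping) are the same.
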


\begin{proof}
    Let $M$ be a graded $\FF[\xx_m]$-module, let $g \in \FF[\xx_m]_+$ be homogeneous of positive degree, and assume that $g$ is a non-zero divisor of $M$. If $B$ is a basis of $M/gM$ then $\{ g^a \cdot f \,:\, f \in B,\ a \geq 0 \}$ is a basis of $M$.  Since $I_m \subseteq \FF[\xx_m]$ is generated by the regular sequence $e_1, e_2,\dots,e_m$, the result follows.
%    \satoshi{Honestly speaking, I am not sure if the exact sequence help to find a basis. Maybe we write "The statement follows from the fact that a graded $\FF[\xx_n]$-module $M$, if $g \in \FF[\xx_n]_+$ is a non-zero divisor of $M$ and $B$ is a basis of $M/gM$ then $\{g^q f \mid f \in B\}$ is a basis of $M$" (as a commutative algebraist this is easier to understand than exact sequence). Other way might be to cite [I Theorem 5.16] of Stanley's green book.} \brendon{I changed the proof along the lines of your suggestion.}
\end{proof}

We are ready to show that the set $\LLL(m,k,t)$ is linearly independent in $\FF[\xx_m]$. We also show that if $f \in \LLL(m,k,t)$ and $x_1^{b_1} \cdots x_n^{b_m}$ appears in the monomial expansion of $f$, the exponent sequence $(b_1,\dots,b_m)$ satisfies a useful bound.

\begin{lemma}
\label{lem:polynomial-ring-linear-independence}
    Let $m,k,t \geq 0$ be such that $k \leq m$. 
    \begin{enumerate}
        \item If $x_1^{b_1} \cdots x_n^{b_m}$ is a monomial appearing in any polynomial in $\LLL(m,k,t)$ with nonzero coefficient, one has the componentwise inequality
        \[ (b_1,\dots,b_m) \leq (t,t+1,\dots,t+m-k-1,\overbrace{t+m-k-1, \dots, t+m-k-1}^{k}).\]
        \item The set $\LLL(m,k,t)$ of polynomials is linearly independent in $\FF[\xx_m]$.
    \end{enumerate}
\end{lemma}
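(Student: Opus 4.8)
My plan is to prove (1) and (2) together, using (1) as a stepping stone toward (2). For part (1), I would start from the definition of $\LLL(m,k,t)$: a typical element is $e_\lambda(x_1,\dots,x_m) \cdot s_\nu(x_{m-k+1},\dots,x_m)$ with $\lambda \subseteq (m^\chi)$ and $\nu \subseteq ((m-k)^k)$. I would bound the exponent contributions separately. A monomial of $s_\nu(x_{m-k+1},\dots,x_m)$ comes from a semistandard tableau of shape $\nu$ with entries in $\{m-k+1,\dots,m\}$, so the exponent of $x_{m-k+j}$ is at most the number of boxes of $\nu$ in rows $1,\dots,j$ that can be filled with values $\leq m-k+j$; since $\nu$ has at most $k$ rows each of length $\leq m-k$, the column-strictness forces the exponent of $x_{m-k+j}$ to be at most $j-1$ entries equal to... actually the cleaner bound: $x_{m-k+j}$ can receive at most $\nu_1 \le m-k$ copies but summed appropriately the total exponent vector of $s_\nu$ is dominated by the vector whose $j$-th coordinate ($1 \le j \le k$) equals $m-k$ when $\nu$ is the full rectangle and less otherwise; combined with the $\chi$-bookkeeping ($\chi = t$ if $\nu_1 < m-k$, $\chi = t-1$ if $\nu_1 = m-k$) this is exactly designed so that $e_\lambda \cdot s_\nu$ has exponent vector $\le (t,t+1,\dots,t+m-k-1,t+m-k-1,\dots,t+m-k-1)$. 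The verification is a bounded, routine check: the exponent of $x_i$ in any monomial of $e_\lambda$ is at most $\chi = $ (number of rows of $\lambda$) $\le t$ or $t-1$, and the exponent of $x_{m-k+j}$ in $s_\nu$ adds at most $m-k-1$ (or $m-k$ in the full-rectangle case, compensated by $\chi = t-1$). I expect this to be the easier half.

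For part (2), the strategy is to invoke Lemma~\ref{lem:polynomial-ring-basis}: since $\{e_1^{q_1}\cdots e_m^{q_m} \cdot f : f \in \BBB,\ q_i \ge 0\}$ is a basis of $\FF[\xx_m]$ whenever $\BBB$ descends to a basis of $\FF[\xx_m]/I_m$, it suffices to show that the Schur polynomials $\{s_\nu(x_{m-k+1},\dots,x_m) : \nu \subseteq ((m-k-1)^k)\}$ (the $t=0$ case) are linearly independent modulo $I_m$, and then that multiplying by $e_\lambda$ for $\lambda \subseteq (m^\chi)$ with the $\chi$-adjustment keeps everything independent. Concretely: each $f \in \LLL(m,k,t)$ is $e_\lambda \cdot g$ where $g = s_\nu(x_{m-k+1},\dots,x_m)$ is one of the $t=0$-type generators (with the subtlety that $\nu_1 = m-k$ is allowed when $t \ge 1$, shifting $\chi$ down by one). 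I would organize the $e_\lambda$ factors: write $e_\lambda = e_{\lambda'_1} e_{\lambda'_2}\cdots$, a product of elementary symmetric polynomials in all $m$ variables, so $\lambda \subseteq (m^\chi)$ means $e_\lambda$ is a monomial in $e_1,\dots,e_m$ of "height" $\le \chi$. Then $\LLL(m,k,t)$ is contained in $\{e_1^{q_1}\cdots e_m^{q_m} \cdot s_\nu(x_{m-k+1},\dots,x_m)\}$ ranging over appropriate $(q_i)$ and $\nu$, and by Lemma~\ref{lem:polynomial-ring-basis} these are all distinct basis elements of $\FF[\xx_m]$ provided the $s_\nu(x_{m-k+1},\dots,x_m)$ are linearly independent in $\FF[\xx_m]/I_m$ — which they are because they are among the Schur polynomials $s_\nu$ with $\nu \subseteq ((m-1)^{m-1})$... but wait, here $\nu$ is a partition in only $k$ variables $x_{m-k+1},\dots,x_m$, not $m$ variables, so $s_\nu(x_{m-k+1},\dots,x_m)$ is not literally a Schur polynomial in the full variable set. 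The right tool is instead the classical fact that the images of $s_\nu(x_{m-k+1},\dots,x_m)$ for $\nu \subseteq ((m-k)^k)$... no — for a clean count, note $\dim \FF[\xx_m]/I_m = m!$ and the relevant sub-staircase monomials; I would instead argue directly that the map sending $e_\lambda \cdot s_\nu$ to the pair $(\lambda, \nu)$ is "triangular" with respect to a monomial order, reading off $\nu$ from the exponents of $x_{m-k+1},\dots,x_m$ in a distinguished (e.g. lex-leading) monomial after first stripping the $e_i$'s via the coinvariant basis decomposition.

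The main obstacle, and the step requiring the most care, is making the triangularity in part (2) precise: one must show the leading terms (in a suitable order, perhaps lexicographic with $x_1 > \cdots > x_m$ or a weight order adapted to the staircase) of the polynomials $e_\lambda(x_1,\dots,x_m)\cdot s_\nu(x_{m-k+1},\dots,x_m)$ are pairwise distinct as $(\lambda,\nu)$ varies. The cleanest route is probably to combine Lemma~\ref{lem:polynomial-ring-basis} with the observation that $\{s_\nu(x_{m-k+1},\dots,x_m) : \nu \subseteq ((m-k)^k)\}$ together with the Artin monomials in $x_1,\dots,x_{m-k}$ forms (a variant of) the Artin basis of $\FF[\xx_m]/I_m$ — but since our $\nu$ range over $((m-k-1)^k)$ (or $((m-k)^k)$ in the $\nu_1 = m-k$ case handled by the $\chi$-shift), this is exactly a sub-collection, hence still independent modulo $I_m$. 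Then part (1), already proved, guarantees these polynomials lie in a bounded-degree region, which is what pins down which $e_1^{q_1}\cdots e_m^{q_m}$-multiples appear and confirms no collisions. I would present part (1) first, then deduce part (2) from Lemma~\ref{lem:polynomial-ring-basis} plus the sub-staircase/Artin-basis observation, with part (1) supplying the degree bound that makes the indexing unambiguous.
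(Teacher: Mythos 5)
Your treatment of part (1) is essentially the paper's: each exponent contributed by $e_\lambda(x_1,\dots,x_m)$ is at most $\lambda'_1 \leq \chi$ and each exponent contributed by $s_\nu(x_{m-k+1},\dots,x_m)$ is at most $\nu_1$, and the two cases $\nu_1 < m-k$ (where $\chi = t$) and $\nu_1 = m-k$ (where $\chi = t-1$) are designed exactly so that the sums are dominated by the stated bound. No issues there.

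For part (2) you land on the right skeleton --- reduce to showing that the Schur polynomials $s_\nu(x_{m-k+1},\dots,x_m)$, $\nu \subseteq ((m-k)^k)$, are linearly independent in $R_m$, then apply Lemma~\ref{lem:polynomial-ring-basis} --- but the step you offer in support of that reduction does not hold. You claim that these Schur polynomials, together with the Artin monomials in $x_1,\dots,x_{m-k}$, form ``a variant of'' the Artin basis of $R_m$. That collection has at most $\binom{m}{k}(m-k)! = m!/k!$ elements, while $\dim_\FF R_m = m!$, so for $k \geq 2$ it cannot be a basis; there is no tensor decomposition of $R_m$ with a symmetric-polynomial factor in the last $k$ variables. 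Your earlier ``triangularity of leading terms'' sketch is another run at the same gap, but it is neither carried out nor needed.

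The paper's argument for the needed linear independence is simpler and avoids all of this. For $\nu \subseteq ((m-k)^k)$, every monomial appearing in $s_\nu(x_{m-k+1},\dots,x_m)$ has exponent of $x_i$ at most $\nu_1 \leq m-k < i$ for each $i \geq m-k+1$, so every such monomial already lies in the Artin basis $\{x_1^{a_1}\cdots x_m^{a_m} \,:\, a_i < i\}$ of $R_m$. Since those monomials are linearly independent modulo $I_m$, any linear combination of the $s_\nu(x_{m-k+1},\dots,x_m)$ lying in $I_m$ must vanish identically in $\FF[\xx_m]$; and distinct Schur polynomials in a fixed variable set indexed by partitions with at most $k$ parts are classically linearly independent in $\FF[\xx_m]$. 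With that in hand, Lemma~\ref{lem:polynomial-ring-basis} finishes: the $e_\lambda$ with parts $\leq m$ are precisely the monomials $e_1^{q_1}\cdots e_m^{q_m}$, so $\LLL(m,k,t)$ is a subset of the resulting basis of $\FF[\xx_m]$. In particular the worry about collisions between different $(\lambda,\nu)$-pairs, and the appeal to part (1) to resolve it, is superfluous once this structure is visible.
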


%\satoshi{This is not a strong opinion but the proof of (1) actually proves $(b_1,\dots,b_n) \leq (t,...,t,t+n-k-1,\dots,t+n-k-1)$, which may makes the reader confuse even though the statement of (1) is the condition what we actally need?} \brendon{I wrote a comment to this effect after the proof.}

The conditions in Lemma~\ref{lem:polynomial-ring-linear-independence} (1) are similar to the `sub-staircase' conditions defining the set $\AAA_m(J)$ of superspace Artin monomials for $J = \{m-k+1,\dots,m-1,m\},$ except that the staircase starts at $t+1$ instead of $1$. This `elevated staircase' arises naturally by considering various segments of $\stair(J(\mu,\gamma))$ for signed partitions $(\mu,\gamma)$. 

\begin{proof}
    For (1), consider an element $e_\lambda(x_1,\dots,x_m) \cdot s_{\nu}(x_{m-k+1},\dots,x_{m-1},x_m) \in \LLL(m,k,t).$ None of the exponents arising in the monomial expansion of $e_\lambda(x_1,\dots,x_m)$ are $ \leq \lambda'_1$. Furthermore, by considering semistandard tableaux we see that 
    \begin{quote}$s_\nu(x_{m-k+1},\dots,x_{m-1},x_m)$ is a linear combination of monomials $x_{m-k+1}^{c_{m-k+1}} \cdots x_{m-1}^{c_{m-1}} x_m^{c_m}$ for which $c_i \leq \nu_1$ for all $i$.
    \end{quote}
    Applying these facts to the product $e_\lambda(x_1,\dots,x_m) \times s_{\nu}(x_{m-k+1},\dots,x_{m-1},x_m)$ yields (1). 

    For (2), we claim that the set of Schur polynomials \begin{equation}
    \label{eq:restricted-schur}
    \{ s_\nu(x_{m-k+1},\dots,x_{m-1},x_m) \,:\, \nu_1 \leq m-k \text{ and } \nu'_1 \leq k \}\end{equation}
    is linearly independent in $R_m = \FF[\xx_m]/I_m$. 
    Indeed, every monomial in $s_\nu(x_{m-k+1},\dots,x_{m-1},x_m)$ lies in the Artin basis $\{ x_1^{a_1} \cdots x_m^{a_m} \,:\, a_i < i \}$ of $R_m$. We are therefore reduced to showing that \eqref{eq:restricted-schur} is linearly independent in $\FF[\xx_m]$, and this is well known. The linear independence assertion in (2) is a consequence of Lemma~\ref{lem:polynomial-ring-basis}.
\end{proof}

\begin{remark}
    \label{rmk:geometric-interpretation}
    The linear independence of \eqref{eq:restricted-schur} in $R_m$ has a geometric interpretation. Let $\Fl_m$ be variety of complete flags in $\CC^m$ with cohomology ring  $H^*(\Fl_m) = R_m$. The space $\Fl_m$ has an affine paving by open Schubert cells $C_w$ indexed by $w \in \symm_m$. The classes $[\overline{C_w}] \in H^*(\Fl_m)$ of these cells form a linear basis for $H^*(\Fl_m) = R_m$.  The set \eqref{eq:restricted-schur} corresponds to the representatives $[\overline{C_w}]$ for $w \in \symm_m$ with $w(1) < \cdots < w(m-k)$ and $w(m-k+1) < \cdots < w(m)$. See \cite{Fulton} for details.
\end{remark}

The astute reader may have observed that the proof of Lemma~\ref{lem:polynomial-ring-linear-independence} (1) actually shows the stronger componentwise inequality 
$(b_1,\dots,b_m) \leq (t^{m-k},(t+m-k-1)^k)$. The weaker statement in Lemma~\ref{lem:polynomial-ring-linear-independence} (1) is used for proving Lemma~\ref{lem:parabolic-dimension-equality} below.

We give an example of Lemma~\ref{lem:polynomial-ring-linear-independence}. Let $m = 5, k = 2,$ and $t=2$. Then
\[ (t,t+1,\dots,t+m-k-1,\overbrace{t+m-k-1,\dots,t+m-k-1}^k) = (2,3,4,4,4).\]
The set $\LLL(5,2,2)$ of polynomials in $\FF[\xx_5]$ may be written as a disjoint union
\begin{footnotesize}
\begin{multline*}
  \LLL(5,2,2) = \left\{ \begin{array}{ccccccc} e_{55}, & e_{54}, & e_{53}, & e_{52}, & e_{51}, & e_5, & e_{44}, \\
  e_{43}, & e_{42}, & e_{41}, & e_{4}, & e_{33}, & e_{32}, & e_{31}, \\ e_3, & e_{22}, & e_{21}, & e_2, & e_{11}, & e_1, & e_\varnothing\end{array} \right\} \cdot 
  \left\{ \begin{array}{cc} s_{22}(x_4,x_5) & s_{21}(x_4,x_5) \\ s_{2}(x_4,x_5) & s_{11}(x_4,x_5) \\ s_{1}(x_4,x_5) & s_{\varnothing}(x_4,x_5) \end{array} \right\} \sqcup \\
  \left\{ e_5, \, e_4, \, e_3, \, e_2, \, e_1,\, e_\varnothing\right\} \cdot 
  \left\{ \begin{array}{cc} s_{33}(x_4,x_5) & s_{32}(x_4,x_5) \\ s_{31}(x_4,x_5) & s_{3}(x_4,x_5)  \end{array} \right\}
\end{multline*}
\end{footnotesize}

\noindent
where the $e$'s are in the full variable set $\{x_1,\dots,x_5\}$. 
%The notation $\{-\}\cdot\{-\}$ means to multiply a polynomial in the first set with a polynomial in the second set in all possible ways.
%(satoshi) I moved this sentence to the example after Lem 4.12.
It is not hard to see that if $f \in \LLL(5,2,2)$ and  $m$ is a monomial appearing  in $f$, the exponent vector of $m$ is componentwise $\leq (2,3,4,4,4)$. Furthermore, the set of Schur polynomials $s_\mu(x_4,x_5)$ where $\mu \subseteq (3,3)$ descends to a linearly independent subset of $\FF[\xx_5]/I_5$. 

We want to relate the cardinality of $\LLL(m,k,t)$ to that of the set $\AAA(\mu,\gamma)$ introduced in Definition~\ref{def:antisymmetrized-artin}. To do this, we prove the following simple enumerative lemma.

\begin{lemma}
\label{lem:L-equals-I}
Let $m,k,t \geq 0$ be such that $k \leq n$. Let $\III(m,k,t)$ be the collection of nonnegative integer sequences $(c_1,\dots,c_m)$ of length $m$ such that $\dots$
\begin{itemize}
    \item we have $c_1 < \dots < c_{m-k}$,
    \item we have $c_{m-k+1} \leq \cdots \leq c_m$, and
    \item we have the componentwise inequality
    \[ (c_1,\dots,c_m) \leq (t,t+1,\dots,t+m-k-1,\overbrace{t+m-k-1, \dots, t+m-k-1}^{k}).\]
\end{itemize}
Then $\# \LLL(m,k,t) = \#\III(m,k,t)$.
\end{lemma}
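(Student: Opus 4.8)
The plan is to prove the equality of cardinalities by counting each side explicitly and then checking that the two counts agree via an elementary binomial identity.

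First I would count $\#\III(m,k,t)$. The defining conditions impose no constraint linking the initial block $(c_1,\dots,c_{m-k})$ to the terminal block $(c_{m-k+1},\dots,c_m)$, so the count factors as a product. For the initial block, the substitution $d_i := c_i - (i-1)$ converts strictly increasing sequences $0 \le c_1 < \cdots < c_{m-k}$ with $c_i \le t+i-1$ into weakly increasing sequences $0 \le d_1 \le \cdots \le d_{m-k} \le t$; there are $\binom{m-k+t}{t}$ of these. For the terminal block, a weakly increasing length-$k$ sequence with entries in $\{0,1,\dots,t+m-k-1\}$ is just a multiset of size $k$ from a $(t+m-k)$-element set, and there are $\binom{m+t-1}{k}$ of those. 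Hence $\#\III(m,k,t) = \binom{m-k+t}{t}\binom{m+t-1}{k}$.

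Next I would count $\#\LLL(m,k,t)$. By Lemma~\ref{lem:polynomial-ring-linear-independence}(2) the polynomials $e_\lambda(x_1,\dots,x_m) \cdot s_\nu(x_{m-k+1},\dots,x_m)$ attached to distinct admissible pairs $(\lambda,\nu)$ are linearly independent, hence distinct, so $\#\LLL(m,k,t)$ equals the number of admissible pairs. Split on whether $\nu_1 < m-k$ or $\nu_1 = m-k$. In the first case $\chi = t$, there are $\binom{m-1}{k}$ choices of $\nu \subseteq ((m-k-1)^k)$ and $\binom{m+t}{t}$ choices of $\lambda \subseteq (m^t)$. In the second case $\chi = t-1$; deleting the (forced) first row of size $m-k$ identifies $\{\nu \subseteq ((m-k)^k) : \nu_1 = m-k\}$ with $\{\nu^- \subseteq ((m-k)^{k-1})\}$, of which there are $\binom{m-1}{k-1}$, and there are $\binom{m+t-1}{t-1}$ choices of $\lambda \subseteq (m^{t-1})$. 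With the standard conventions $\binom{a}{b}=0$ for $b<0$, this yields
\[ \#\LLL(m,k,t) = \binom{m-1}{k}\binom{m+t}{t} + \binom{m-1}{k-1}\binom{m+t-1}{t-1}. \]

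It then remains to verify the identity $\binom{m-1}{k}\binom{m+t}{t} + \binom{m-1}{k-1}\binom{m+t-1}{t-1} = \binom{m-k+t}{t}\binom{m+t-1}{k}$. I would do this by applying Pascal's rule $\binom{m+t}{t} = \binom{m+t-1}{t} + \binom{m+t-1}{t-1}$ and then $\binom{m-1}{k} + \binom{m-1}{k-1} = \binom{m}{k}$ to rewrite the left-hand side as $\binom{m-1}{k}\binom{m+t-1}{m-1} + \binom{m}{k}\binom{m+t-1}{m}$, applying the subset-of-a-subset identity $\binom{N}{a}\binom{a}{k} = \binom{N}{k}\binom{N-k}{a-k}$ with $N = m+t-1$ to each term, and collapsing the result with one final use of Pascal's rule to reach $\binom{m+t-1}{k}\binom{m-k+t}{t}$. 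None of this is delicate; the only thing requiring attention is the bookkeeping in the $\#\LLL(m,k,t)$ case split, in particular the degenerate ranges $t=0$ (where $\chi = t-1 < 0$ makes the second case empty) and $m=k$ (where $\nu=\varnothing$ already has $\nu_1 = m-k$, making the first case empty), both of which are absorbed uniformly by the binomial conventions. A direct bijection from $\III(m,k,t)$ to the set of admissible pairs $(\lambda,\nu)$ is also possible, but the clean factorization of $\III(m,k,t)$ into independent blocks makes the counting route the most transparent, so that is the one I would carry out.
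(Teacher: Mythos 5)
Your proof is correct and follows essentially the same strategy as the paper: count both $\#\LLL(m,k,t)$ and $\#\III(m,k,t)$ by elementary enumeration and verify that the resulting binomial expressions agree. The paper happens to organize the $\LLL$-count by conditioning on $\lambda$ (namely $\lambda \subseteq (m^{t-1})$ versus $\lambda \supseteq (1^t)$, i.e.\ $\lambda$ has exactly $t$ parts) rather than on whether $\nu_1 < m-k$ or $\nu_1 = m-k$, but after your first two Pascal applications you land on exactly the paper's intermediate expression $\binom{m-1}{k}\binom{m+t-1}{m-1}+\binom{m}{k}\binom{m+t-1}{m}$, and where you carry out the binomial identity chain in detail the paper simply remarks that one checks the two quantities coincide.
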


\begin{proof}
%\satoshi{I tiried to write a new version. Weak point is the counting of $\LLL(m,k,t)$ is not direct from the def. Merit is the computation of binomial coefficients is easier.}
We give a formula counting $\LLL(m,k,t)$. The idea is to consider the cases $\lambda \subseteq (m^{t-1})$ and $\lambda \supseteq (1^t)$ separately.
%\brendon{These are $\subseteq$ and $\supseteq$, right? I think we used these for weak containment elsewhere.}
%\satoshi{Oh, yes. Thanks for point this out. I forgot to say that if you feel that this change does not look reasonable, let's go back to the original version.}
%\brendon{This change looks reasonable to me. It's closer to being bijective, which I like.}
\begin{equation}
    \label{eq:L-count}
    \# \LLL(m,k,t) = \overbrace{{m+t-1 \choose m} \cdot {m \choose k}}^{\lambda \subseteq (m^{t-1})} + \overbrace{{m+t-1 \choose m-1} \cdot {m-1 \choose k}}^{\lambda \supseteq (1^t)}.
\end{equation}
In the first overbrace, the binomial coefficient ${m+t-1 \choose m}$ counts partitions $\lambda$ inside a $m \times (t-1)$ rectangle and the binomial coefficient  ${m \choose k}$ counts partitions $\nu$ inside a $(m-k) \times k$ rectangle.
In the second overbrace, the binomial coefficient ${m+t-1 \choose m-1}$ counts partitions $\lambda$ inside a $m \times t$ rectangle
which satisfy $\lambda \supseteq (1^t)$
while ${m-1 \choose k}$ counts partitions $\nu$ inside a $(m-k-1) \times k$ rectangle.

Sequences $(c_1,\dots,c_m) \in \III(m,k,t)$ are enumerated as follows.
{\small
\begin{equation}
    \label{eq:I-count}
    \# \III(m,k,t) = {m+t-k \choose t} \cdot {m+t-1 \choose k}=\left\{ {m+t-k-1 \choose t-1}+{m+t-k -1\choose t}\right\} \cdot {m+t-1 \choose m+t-k-1}.
\end{equation}
}

\noindent
Here ${m+t-k \choose t}$ counts the first part $0 \leq c_1 <\dots <c_{m-k} \leq t+m-k-1$ of the sequence while ${m+t-1 \choose k}$ counts the last part $0 \leq c_{m-k+1} \leq \dots \leq c_m \leq t+m-k-1$.
One checks that the quantities \eqref{eq:L-count} and \eqref{eq:I-count} coincide.
%
%
%We give a formula counting $\LLL(m,k,t)$. The idea is to consider the cases $\nu_1 < m-k$ and $\nu_1 = m-k$ separately.
%\begin{equation}
%    \label{eq:L-count}
%    \# \LLL(m,k,t) = \overbrace{{m+t \choose t} \cdot {m-1 \choose k}}^{\nu_1 < m-k} + \overbrace{{m+t-1 \choose t-1} \cdot {m-1 \choose k-1}}^{\nu_1 = n-k}.
%\end{equation}
%In the first overbrace, the binomial coefficient ${m+t \choose t}$ counts partitions $\lambda$ inside a $m \times t$ rectangle and the binomial coefficient  ${m-k \choose k}$ counts partitions $\nu$ inside a $(m-k-1) \times k$ rectangle.
%In the second overbrace, the binomial coefficient ${m+t-1 \choose t-1}$ counts partitions $\lambda$ inside a $m \times (t-1)$ rectangle while ${m-1 \choose k-1}$ counts partitions $\nu$ inside a $(m-k) \times k$ rectangle which satisfy $\nu_1 = m-k$.
%
%
%Sequences $(c_1,\dots,c_m) \in \III(m,k,t)$ are enumerated as follows.
%\begin{equation}
%    \label{eq:I-count}
%    \# \III(m,k,t) = {m+t-k \choose t} \cdot {m+t-1 \choose k}.
%\end{equation}
%Here ${m+t-k \choose t}$ counts the first part $0 \leq c_1 <\dots <c_{m-k} \leq t+m-k-1$ of the sequence while ${m+t-1 \choose k}$ counts the last part $0 \leq c_{m-k+1} \leq \dots \leq c_m \leq t+m-k-1$.
%One checks that the quantities \eqref{eq:L-count} and \eqref{eq:I-count} coincide.
\end{proof}

Our next result is a direct relationship between $\# \AAA_n(\mu,\gamma)$ and the numbers $\# \LLL(-,-,-)$. This is the key enumerative result necessary to calculate the dimension of $\varepsilon_\mu \cdot SH_n$. 

%We will produce elements of $\varepsilon_\mu \cdot SH_n$ whose fermionic leading terms involve products of polynomials in various $\LLL(n',k',t')$ taken in disjoint variable sets. The following  lemma ensures that sufficiently many elements of $\varepsilon_\mu \cdot SH_n$ are so obtained.

\begin{lemma}
    \label{lem:A-equals-L}
    Let $(\mu,\gamma)$ be a signed partition of $n$ with $\mu = (\mu_1,\dots,\mu_p)$ and $\gamma = (\gamma_1,\dots,\gamma_p)$. Write
    \begin{multline}
        \stair(J(\mu,\gamma)) = \\(\underbrace{1,2,\dots,t_1,\overbrace{t_1,\dots,t_1}^{\gamma_1}}_{\mu_1} \mid  \underbrace{t_1 + 1, t_1 + 2, \dots, t_2, \overbrace{t_2, \dots, t_2}^{\gamma_2}}_{\mu_2} \mid  \dots \mid \underbrace{t_{p-1}+1,t_{p-1}+2,\dots,t_p,\overbrace{t_p,\dots,t_p}^{\gamma_p}}_{\mu_p.}).
    \end{multline}
    One has 
     \begin{equation}
     \label{eq:A-I-L}
        \# \AAA_n(\mu,\gamma) = \prod_{j=1}^p \# \III(\mu_j,\gamma_j,t_{j-1}) = 
        \prod_{j=1}^p \# \LLL(\mu_j,\gamma_j,t_{j-1}),
    \end{equation}
where $t_0=0$.
\end{lemma}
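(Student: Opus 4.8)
The plan is to establish the two equalities in~\eqref{eq:A-I-L} separately. The second equality, $\#\III(\mu_j,\gamma_j,t_{j-1}) = \#\LLL(\mu_j,\gamma_j,t_{j-1})$, is exactly Lemma~\ref{lem:L-equals-I} applied with $(m,k,t) = (\mu_j,\gamma_j,t_{j-1})$, so it requires no further work beyond checking the hypothesis $\gamma_j \leq \mu_j$, which holds because $(\mu,\gamma)$ is a signed partition. So the real content is the first equality, $\#\AAA_n(\mu,\gamma) = \prod_{j=1}^p \#\III(\mu_j,\gamma_j,t_{j-1})$.

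For that, I would unwind the definition of $\AAA_n(\mu,\gamma)$ from Definition~\ref{def:antisymmetrized-artin} and observe that the three conditions defining an exponent sequence $(a_1,\dots,a_n) \in \AAA_n(\mu,\gamma)$ are each ``local'' to the blocks cut out by $\mu$: the strict-increase condition on $a_{\mu_1+\cdots+\mu_{j-1}+1} < \cdots < a_{\mu_1+\cdots+\mu_j-\gamma_j}$, the weak-increase condition on $a_{\mu_1+\cdots+\mu_j-\gamma_j+1} \leq \cdots \leq a_{\mu_1+\cdots+\mu_j}$, and the staircase bound $a_i < \stair(J(\mu,\gamma))_i$ all only constrain indices within a single block $\{\mu_1+\cdots+\mu_{j-1}+1,\dots,\mu_1+\cdots+\mu_j\}$. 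Hence $\AAA_n(\mu,\gamma)$ is literally a direct product, over $j = 1,\dots,p$, of the sets of length-$\mu_j$ exponent sequences $(c_1,\dots,c_{\mu_j})$ on that block. It remains to match the set of such sequences on block $j$ with $\III(\mu_j,\gamma_j,t_{j-1})$. Reindexing the block $j$ so that its entries are labelled $1,\dots,\mu_j$, the displayed formula for $\stair(J(\mu,\gamma))$ in the lemma statement tells us the staircase on block $j$ reads $(t_{j-1}+1, t_{j-1}+2, \dots, t_j, t_j,\dots,t_j)$ with $\gamma_j$ repetitions of $t_j$, where $t_j = t_{j-1} + (\mu_j - \gamma_j)$. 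Comparing the strict-then-weak monotonicity conditions and the strict staircase bound $c_i < \stair(\cdot)_i$ (which becomes $c_i \leq \stair(\cdot)_i - 1$) with the three bullet conditions in Definition~\ref{lem:L-equals-I} for $\III(m,k,t)$ with $m = \mu_j$, $k = \gamma_j$, $t = t_{j-1}$, one checks they agree term by term: the first $\mu_j - \gamma_j$ bounds become $c_1 \leq t_{j-1}, c_2 \leq t_{j-1}+1, \dots, c_{\mu_j-\gamma_j} \leq t_{j-1}+\mu_j-\gamma_j-1 = t_j - 1$, and the last $\gamma_j$ bounds are all $c_i \leq t_j - 1 = t_{j-1} + \mu_j - \gamma_j - 1$. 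These are precisely the componentwise bounds in the third bullet of $\III(\mu_j,\gamma_j,t_{j-1})$. Taking the product over $j$ then gives $\#\AAA_n(\mu,\gamma) = \prod_j \#\III(\mu_j,\gamma_j,t_{j-1})$.

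The only genuine obstacle is the indexing bookkeeping: one must verify that the recursion $t_j = t_{j-1} + (\mu_j - \gamma_j)$ implicit in the staircase display is consistent with Definition~\ref{def:signed-partition} of $J(\mu,\gamma)$ and with Definition~\ref{def:antisymmetrized-artin}, and that the ``off-by-one'' between $c_i < \stair(\cdot)_i$ and $c_i \leq \stair(\cdot)_i - 1$ is handled uniformly across the strict and weak parts of each block. This is routine but must be done carefully; once it is in place, the factorization over blocks and the invocation of Lemma~\ref{lem:L-equals-I} finish the proof immediately. I would also remark, for the reader's benefit, that the factorization mirrors the tensor-product decomposition already illustrated in the worked example following Definition~\ref{def:antisymmetrized-artin}.
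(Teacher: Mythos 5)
Your proposal is correct and follows essentially the same line as the paper, which simply cites Lemma~\ref{lem:L-equals-I} for the second equality and declares the first equality ``immediate from the definition of $\AAA_n(\mu,\gamma)$.'' Your proposal correctly identifies what is behind that remark: the defining conditions of $\AAA_n(\mu,\gamma)$ factor block-by-block, and after the reindexing (together with the translation $t_j = t_{j-1}+(\mu_j-\gamma_j)$ and the observation that $a_i < \stair(J(\mu,\gamma))_i$ matches the componentwise bound in $\III(\mu_j,\gamma_j,t_{j-1})$), the set of admissible exponent sequences on block $j$ is precisely $\III(\mu_j,\gamma_j,t_{j-1})$.
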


%\satoshi{I changed $t_{j-1} -1$ to $t_{j-1}$ in (4.34) and clarify $t_0=0$.}

\begin{proof}
    The second equality in \eqref{eq:A-I-L} follows from Lemma~\ref{lem:L-equals-I}. The first equality is immediate from the definition of $\AAA_n(\mu,\gamma)$.
\end{proof}

\subsection{Lower bound on $\varepsilon_\mu \cdot SH_n$} We are ready to calculate the dimension of $\varepsilon_\mu \cdot SH_n$.
%\satoshi{To be honest, I am a little hesitate to say we show that the spanning set is a basis even though it is true because what we actually did is the dimension count of other object. Maybe say to show that $\varepsilon_n \cdot SH_n$ has the right dimension?} \brendon{Since the dimension count is what we actually need, this makes sense to me. Maybe we can keep the statement of Lemma 4.17 as-is?}
%\satoshi{I think the statement itself is good.}

\begin{lemma}
    \label{lem:parabolic-dimension-equality}
    Let $\mu \vdash n$. The vector space $\varepsilon_\mu \cdot SH_n$ has dimension 
    \begin{equation}
        \dim_\FF \varepsilon_\mu \cdot SH_n = \sum_{\substack{\zero \leq \gamma \leq \mu}} \# \AAA_n(\mu,\gamma).
    \end{equation}
    The set 
    \begin{equation}
        \bigsqcup_{\zero \leq \gamma \leq \mu} \varepsilon_\mu \cdot ( \AAA_n(\mu,\gamma) \cdot \theta_{J(\mu,\gamma)})
    \end{equation}
    descends to a basis of $\varepsilon_\mu \cdot SR_n$.
\end{lemma}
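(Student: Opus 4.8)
The strategy is a sandwich argument: the spanning set from Lemma~\ref{lem:parabolic-spanning} gives an upper bound $\dim_\FF \varepsilon_\mu \cdot SR_n \leq \sum_{\zero \leq \gamma \leq \mu} \# \AAA_n(\mu,\gamma)$, while Lemma~\ref{lem:additionbysatoshi} together with the identification $\Omega_n/SI_n \cong SH_n$ gives the matching lower bound once we show $\dim_\FF R_{\mu,\gamma} = \# \AAA_n(\mu,\gamma)$ for every signed partition $(\mu,\gamma)$. Since $\dim_\FF \varepsilon_\mu \cdot SR_n = \dim_\FF \varepsilon_\mu \cdot SH_n$ (the harmonic space and the quotient are isomorphic as $\symm_n$-modules, hence have the same $\varepsilon_\mu$-isotypic dimension), proving both inequalities forces equality throughout; the spanning set of Lemma~\ref{lem:parabolic-spanning} then has cardinality equal to the dimension, so it is a basis.

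\textbf{Key steps.} First I would record the upper bound: Lemma~\ref{lem:parabolic-spanning} exhibits $\bigsqcup_{\zero \leq \gamma \leq \mu} \varepsilon_\mu \cdot (\AAA_n(\mu,\gamma) \cdot \theta_{J(\mu,\gamma)})$ as a spanning set of $\varepsilon_\mu \cdot SR_n$, so $\dim_\FF \varepsilon_\mu \cdot SR_n$ is at most its cardinality, which is $\sum_{\zero \leq \gamma \leq \mu} \# \AAA_n(\mu,\gamma)$. Second, I would establish the crucial identity $\dim_\FF R_{\mu,\gamma} = \# \AAA_n(\mu,\gamma)$. The module $R_{\mu,\gamma}$ is the $\FF[\xx_n]^{\symm_\mu}$-submodule of $\FF[\xx_n]/(I_n : f_{J(\mu,\gamma)})$ generated by $Z_{\mu,\gamma}$, and $\FF[\xx_n]^{\symm_\mu} = \bigotimes_{j=1}^p \FF[x_{\mu_1+\cdots+\mu_{j-1}+1},\dots,\mu_1+\cdots+\mu_j]^{\symm_{\mu_j}}$ factors as a tensor product over the blocks, as does the generating set $Z_{\mu,\gamma}$. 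Using Theorem~\ref{thm:colon-ideal-basis}, which says $\AAA_n(J(\mu,\gamma))$ is a basis of $\FF[\xx_n]/(I_n : f_{J(\mu,\gamma)})$, together with the elevated-staircase shape of $\stair(J(\mu,\gamma))$ recorded in Lemma~\ref{lem:A-equals-L}, the problem decouples block by block. Within the $j$-th block, $R_{\mu,\gamma}$ restricts to the span of $\FF[\xx_{\mu_j}]^{\symm_{\mu_j}} \cdot \{ s_{\nu_j}(x_{\cdots-\gamma_j+1},\dots) : \nu_j \subseteq ((\mu_j - \gamma_j - [\![j=1]\!])^{\gamma_j}) \}$ inside the appropriate colon quotient, which — after translating elementary symmetric polynomials in the full variable set into elementary symmetric polynomials in the block variables via the $J$-staircase structure — is exactly the span of the set $\LLL(\mu_j,\gamma_j,t_{j-1})$. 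By Lemma~\ref{lem:polynomial-ring-linear-independence}(2) this set is linearly independent in $\FF[\xx_{\mu_j}]$, so its image in the relevant colon quotient has dimension $\#\LLL(\mu_j,\gamma_j,t_{j-1})$; multiplying over $j$ and invoking Lemma~\ref{lem:A-equals-L} gives $\dim_\FF R_{\mu,\gamma} = \prod_j \#\LLL(\mu_j,\gamma_j,t_{j-1}) = \#\AAA_n(\mu,\gamma)$. Third, feeding this into Lemma~\ref{lem:additionbysatoshi} yields $\dim_\FF \varepsilon_\mu \cdot SH_n \geq \sum_{\zero \leq \gamma \leq \mu} \#\AAA_n(\mu,\gamma)$. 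Combining with the upper bound and $\dim_\FF \varepsilon_\mu \cdot SR_n = \dim_\FF \varepsilon_\mu \cdot SH_n$ proves the dimension formula; the spanning set of Lemma~\ref{lem:parabolic-spanning}, having the right cardinality, is then automatically a basis.

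\textbf{Main obstacle.} The delicate point is the block-by-block decoupling needed to show $\dim_\FF R_{\mu,\gamma} = \#\AAA_n(\mu,\gamma)$. One must be careful that the linear independence of the $\LLL$-polynomials in the full polynomial ring $\FF[\xx_{\mu_j}]$ (Lemma~\ref{lem:polynomial-ring-linear-independence}) survives passage to the colon quotient $\FF[\xx_n]/(I_n : f_{J(\mu,\gamma)})$: the monomial-support bound in Lemma~\ref{lem:polynomial-ring-linear-independence}(1) is precisely what guarantees that every monomial appearing in a product $\prod_j e_{\lambda_j} s_{\nu_j}$ lies in the Artin-type basis $\AAA_n(J(\mu,\gamma))$ of that quotient, so that linear independence is not destroyed. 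Reconciling the "$e_\lambda$ in the full variable set" description in Definition~\ref{defn:L-polynomials} with the per-block factorization of $\FF[\xx_n]^{\symm_\mu}$ — i.e.\ checking that multiplying by $e_d$ of a terminal variable set and multiplying by $e_d$ of a single block agree modulo the colon ideal, via the column operations of Observation~\ref{obs:column-operations} — is the bookkeeping that requires the most care, but it is exactly the content already set up in Lemmas~\ref{lem:L-equals-I} and~\ref{lem:A-equals-L}.
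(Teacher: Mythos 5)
Your proposal matches the paper's proof essentially exactly: a sandwich argument combining the spanning set of Lemma~\ref{lem:parabolic-spanning} with the lower bound of Lemma~\ref{lem:additionbysatoshi}, where the key step is to show the product set $\EEE(\mu,\gamma)$ built from the $\LLL(\mu_j,\gamma_j,t_{j-1})$ has the right size and is linearly independent in $\FF[\xx_n]/(I_n : f_{J(\mu,\gamma)})$, using Lemma~\ref{lem:polynomial-ring-linear-independence} together with Theorem~\ref{thm:colon-ideal-basis} so that linear independence over $\FF[\xx_n]$ descends to the quotient. Two small notes: only the inequality $\dim_\FF R_{\mu,\gamma} \geq \#\AAA_n(\mu,\gamma)$ is needed (you do not need, and do not actually prove, a spanning argument for equality — the sandwich supplies it), and the ``translation'' you worry about via Observation~\ref{obs:column-operations} is unnecessary, since the $e_\lambda$'s in $\LLL(X_j,\gamma_j,t_{j-1})$ are already elementary symmetric polynomials in the block-$j$ variables alone and hence lie directly in $\FF[\xx_n]^{\symm_\mu}$ with no column operations required.
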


%\satoshi{I somehow feel that the translation $\TT \to s(\TT)$ is a bit annoying in the proof. I am wondering that before proving Lma 4.17, we introduce the lemma stating that
%\[
%\{s(\TT)\mid 1 \not \in T_1, \mbox{ $\TT$ is a $\mu$-translation sequence }\}
%= \{s_{\nu_1}\cdots s_{\nu_p} \mid \nu_i \subset ((\mu_i-\gamma_i)^{\gamma_i})\}
%\]
%(need to put an appropriate set of variables to each $s_{\nu_i}$ of course) so that one can see a connection to $\LLL(n,k,t)$ and we can directly work on partitions without translating $\TT$ to $s(\TT)$ in each computation in the proof of Lemma 4.17. If necessary I can try to do it. But I do not have a strong opinion for this.} \brendon{I could go either way about this. The proof of Lemma~\ref{lem:parabolic-dimension-equality} is maybe the most technical in the paper and I can see the appeal of breaking it up for that reason. On the other hand, the translation sequences $\TT$ were designed to refine the signed partitions $(\mu,\gamma)$, and I would be inclined to have this refinement be shown in the same place.}
%\satoshi{Below + addition in subsection 4.3 is my actual suggestion. I am probably not following Brendon's comments...}

\begin{proof}
Write $\mu = (\mu_1,\dots,\mu_p)$ and recall the modules $R_{\mu,\gamma}$ of Lemma~\ref{lem:additionbysatoshi}. Thanks to Lemmas~\ref{lem:parabolic-spanning} and \ref{lem:additionbysatoshi}, the lemma will follow if we can establish the inequality
    \begin{equation}
    \label{eq:desired-inequality}
        \dim_\FF R_{\mu,\gamma} \geq \# \AAA_n(\mu,\gamma).
    \end{equation}
Let $X_j=\{x_{\mu_1+\cdots+\mu_{j-1}+1},\dots,x_{\mu_1+\cdots+\mu_j}\}$ for $j=1,2,\dots,p$.
Let $\LLL(X_j,k,t)$ be the set of polynomials obtained from $\LLL(|X_j|=\mu_j,k,t)$ by replacing $x_1,x_2,\dots,$ with  $x_{\mu_1+\cdots+\mu_{j-1}+1},\dots,x_{\mu_1+\cdots+\mu_j}$.
Consider the set of polynomials
\[
\EEE(\mu,\gamma) =\{f_1\cdots f_p \,:\, f_j \in \LLL(X_j,\gamma_j,t_{j-1}) \mbox{ for }j=1,2,\dots,p\}\]
    where $t_0=0$ and $t_1,\dots,t_{p-1}$ are defined as in Lemma~\ref{lem:A-equals-L} by
    \begin{multline}
        \stair(J(\mu,\gamma)) = \\(\underbrace{1,2,\dots,t_1,\overbrace{t_1,\dots,t_1}^{\gamma_1}}_{\mu_1} \mid  \underbrace{t_1 + 1, t_1 + 2, \dots, t_2, \overbrace{t_2, \dots, t_2}^{\gamma_2}}_{\mu_2} \mid  \dots \mid \underbrace{t_{p-1}+1,t_{p-1}+2,\dots,t_p,\overbrace{t_p,\dots,t_p}^{\gamma_p}}_{\mu_p.}).
    \end{multline}
By Lemma~\ref{lem:A-equals-L},
the cardinality of $\EEE(\mu,\gamma)$ is exactly $\#\AAA_n(\mu,\gamma)$.
Also, elements of $\EEE(\mu,\gamma)$ are contained in $\FF[\xx_n]^{\symm_\mu} \cdot Z_{\mu,\gamma}$,
where $Z_{\mu,\gamma}$ is a generating set of $R_{\mu,\gamma}$ given in Lemma \ref{lem:additionbysatoshi}.
We claim that $\EEE(\mu,\gamma)$ is a set of linearly independent elements in $R_{\mu,\gamma}$.
    
    Theorem~\ref{thm:colon-ideal-basis} says that the set $\AAA_n(J(\mu,\gamma))$ of monomials which fit under the staircase $\stair(J(\mu,\gamma))$ descends to a basis of $\FF[\xx_n]/(I_n:f_{J(\mu,\gamma)}).$ Lemma~\ref{lem:polynomial-ring-linear-independence} (1) implies that, for any element $f$ in $\EEE(\mu,\gamma)$, every monomial appearing with nonzero coefficient in $f$ lies in $\AAA_n(J(\mu,\gamma))$. Therefore, to establish the claim we need only show that $\EEE(\mu,\gamma)$ is linearly independent in $\FF[\xx_n]$. This is a consequence of Lemma~\ref{lem:polynomial-ring-linear-independence} (2) and the claim is proven. 
\end{proof}

The proof says that the set $\EEE(\mu,\gamma)$ is actually an $\FF$-basis of $R_{\mu,\gamma}$.
When $\mu=(3,3,2)$ and $\gamma=(1,2,0)$,
we have
$\stair(J(\mu,\gamma))=(1,2,2 \mid 3,3,3 \mid 4,5)$,
%$X_1=\{x_1,x_2,x_3\}$, $X_2=\{x_4,x_5,x_6\}$, $X_3=\{x_7,x_8\}$.
and the set $\EEE(\mu,\gamma)$ is
{\Small
\[
\left\{
\begin{array}{ll}
s_1(x_3)\\
1\end{array}
\right\}
\cdot
\left\{
\begin{array}{ll}
e_3s_{11}(x_5,x_6),e_2s_{11}(x_5,x_6),e_1s_{11}(x_5,x_6),s_{11}(x_5,x_6),\\
e_3s_{1}(x_5,x_6),e_2s_{1}(x_5,x_6),e_1s_{1}(x_5,x_6),s_{1}(x_5,x_6),\\
e_{33}, e_{32},e_{31},e_{3}, e_{22},e_{21},e_{2}, e_{11},e_{1},1
\end{array}
\right\}
\cdot
\left\{
\begin{array}{ll}
e_\lambda(x_7,x_8): \lambda \subseteq (2,2,2)
\end{array}
\right\},
\]
}

\noindent
where $e_\lambda=e_\lambda(x_4,x_5,x_6)$ in the middle set. 
One may compare the cardinality of this set with that of $\AAA_n(\mu,\gamma)$ given after Definition \ref{def:antisymmetrized-artin}.

\begin{remark}
    \label{rmk:alternative-operator}
    Our proofs only used the easy direction of the Operator Theorem~\ref{thm:operator}: the space $SH_n$ contains $\delta_n$ and is closed under higher Euler operators.  Lemma~\ref{lem:parabolic-dimension-equality} in the case $\mu = (1^n)$ gives a  lower bound for the bigraded Hilbert series
    \[ \Hilb(SR_n;q,z) \geq \sum_{J \subseteq [n]} \Hilb(\FF[\xx_n]/(I_n:f_J);q) \cdot z^{n - \# J}.
    \]
%$\Hilb(SR_n;q,z)$ whose main tool is the simple matrix result in Observation~\ref{obs:column-operations}.
% (Satoshi) I deleted the above sentence because it does not make sense now.
%    \satoshi{It may help to understand this sentence if we write the lowerbound more explicitly? For example: Lemma~\ref{lem:parabolic-dimension-equality} in the case $\mu = (1^n)$ proves that the dimension (Hilbert series?) of $\Hilb(SR_n;q,z)$ is bounded below by the sum of the dimension of $\FF[\xx_n]/(I_n:f_J)$ just using the simple matrix result in Observation~\ref{obs:column-operations}.}
    When combined with the upper bound on $\Hilb(SR_n;q,z)$ established in \cite[Sec. 3]{RW} (by easier means than the lower bound reasoning of \cite[Sec. 4]{RW}), this gives a new and simpler proof of the Operator Theorem and Fields Conjecture 1.
%    \andy{It is independent, right?}
%    \brendon{Not quite. We would still need to know that the dimension of $SR_n$ or $SH_n$ is bounded above by $\# \OP_n$. This could be done with the argument in Section 3 of \cite{RW} (which is easier than Section 4) or (knowing what we know now) by Solomon--Terao theory.}
%\andy{Gotcha. I deleted the word ``independent.''}
\end{remark}

\section{Main Results}
\label{sec:Main}

 We start with the ungraded $\symm_n$-structure of $SR_n$. Our proof is based on a simple fact: if $F, G \in \Lambda_n$ are degree $n$ symmetric functions, we have $F=G$ if and only if $e_\mu^\perp F = e_\mu^\perp G$ for all partitions $\mu \vdash n$. This holds because the Hall inner product is nondegenrate and $\{e_\mu \,:\, \mu \vdash n\}$ is a basis of $\Lambda_n$.

\begin{theorem}
    \label{thm:ungraded-fields} {\em (Fields Conjecture 2)}
    We have $SR_n \cong \FF[\OP_n] \otimes \sign$ as ungraded $\symm_n$-modules.
\end{theorem}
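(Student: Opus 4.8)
The plan is to reduce the statement to a comparison of $e_\mu^\perp$-values of Frobenius characteristics. By the observation made immediately before the theorem, two symmetric functions in $\Lambda_n$ coincide if and only if they are equalized by the operator $e_\mu^\perp$ for every partition $\mu \vdash n$. Since an isomorphism $SR_n \cong \FF[\OP_n] \otimes \sign$ of ungraded $\symm_n$-modules is the same thing as the equality $\Frob(SR_n) = \Frob(\FF[\OP_n] \otimes \sign)$ of (ungraded) Frobenius characteristics, it suffices to prove $e_\mu^\perp \Frob(SR_n) = e_\mu^\perp \Frob(\FF[\OP_n] \otimes \sign)$ for all $\mu \vdash n$.

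First I would apply Lemma~\ref{lem:module-antisymmetrization} to rewrite each side as a dimension: $e_\mu^\perp \Frob(SR_n) = \dim_\FF (\varepsilon_\mu \cdot SR_n)$ and $e_\mu^\perp \Frob(\FF[\OP_n] \otimes \sign) = \dim_\FF \varepsilon_\mu \cdot (\FF[\OP_n] \otimes \sign)$. Next I would invoke the main computation of Section~\ref{sec:Parabolic}, namely Lemma~\ref{lem:parabolic-dimension-equality}, which evaluates $\dim_\FF (\varepsilon_\mu \cdot SR_n) = \sum_{\zero \leq \gamma \leq \mu} \# \AAA_n(\mu,\gamma)$, followed by Lemma~\ref{lem:A-combinatorial-interpretation}, which identifies this very sum with $\dim_\FF \varepsilon_\mu \cdot (\FF[\OP_n] \otimes \sign)$. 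Concatenating these equalities yields $e_\mu^\perp \Frob(SR_n) = e_\mu^\perp \Frob(\FF[\OP_n] \otimes \sign)$ for each $\mu$, and the theorem follows since $\{e_\mu : \mu \vdash n\}$ is a basis of $\Lambda_n$ and the Hall pairing is nondegenerate.

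The hard part has already been carried out before we reach this statement: all of the genuine difficulty is concentrated in the construction of the parabolic operators $\DDD^\TT_\mu$ and the resulting basis of $\varepsilon_\mu \cdot SR_n$ in Lemma~\ref{lem:parabolic-dimension-equality}, together with the recursive enumeration underpinning Lemma~\ref{lem:A-combinatorial-interpretation}. Given those, the proof of the theorem is a short assembly. The only point requiring a little care is bookkeeping: $SR_n$ is \emph{a priori} a bigraded $\symm_n$-module, but the statement concerns only its underlying ungraded structure, so $\Frob(SR_n)$ here denotes the Frobenius characteristic of the total space $\bigoplus_{i,j} (SR_n)_{i,j}$; the bigrading is irrelevant and is discarded throughout the argument (it will be reinstated only for Fields Conjecture~3).
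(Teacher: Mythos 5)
Your proposal is correct and follows exactly the same route as the paper: reduce to the dimension equality $\dim_\FF (\varepsilon_\mu \cdot SR_n) = \dim_\FF \varepsilon_\mu \cdot (\FF[\OP_n] \otimes \sign)$ via Lemma~\ref{lem:module-antisymmetrization}, then supply both sides from Lemma~\ref{lem:parabolic-dimension-equality} and Lemma~\ref{lem:A-combinatorial-interpretation}. Your additional remark on discarding the bigrading is a sensible clarification that the paper leaves implicit.
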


\begin{proof}
    Let $\mu \vdash n$. By Lemma~\ref{lem:module-antisymmetrization} and the fact mentioned above, it is enough to show
    \begin{equation}
    \label{eq:desired-dimensional-equality}
        \dim_\FF \varepsilon_\mu \cdot (SR_n) = \dim_\FF \varepsilon_\mu \cdot (\FF[\OP_n] \otimes \sign).
    \end{equation}
    Indeed, we have 
    \begin{equation}
        \dim_\FF \varepsilon_\mu \cdot (SR_n) = \sum_{\zero \leq \gamma \leq \mu} \# \AAA_n(\mu,\gamma) = \dim_\FF \varepsilon_\mu \cdot (\FF[\OP_n] \otimes \sign)
    \end{equation}
    where the first equality is a consequence of Lemma~\ref{lem:parabolic-dimension-equality} and the second equality follows from Lemma~\ref{lem:A-combinatorial-interpretation}.
\end{proof}

The bigraded Fields Conjecture can be proven according to the above strategy by showing 
\begin{equation}
\label{eq:parabolic-hilbert-skewing}
    \Hilb(\varepsilon_\mu \cdot SR_n;q,z) = e_\mu^\perp \left( \sum_{k=1}^n C_{n,k}(\xx;q) \cdot z^{n-k}\right)
\end{equation}
for all $\mu \vdash n$. While it is possible to prove \eqref{eq:parabolic-hilbert-skewing} directly from Lemma~\ref{lem:parabolic-dimension-equality} and known recursive formulas \cite[Lem. 3.7]{HRS} for $e_\mu^\perp C_{n,k}(\xx;q)$, we give a shorter proof using another module for $C_{n,k}(\xx;q)$.

\begin{theorem}
    \label{thm:fields} {\em (Fields Conjecture 3)}
    The bigraded $\symm_n$-module $SR_n$ has Frobenius characteristic
    \[ \grFrob(SR_n;q,z) = \sum_{k=1}^n C_{n,k}(\xx;q) \cdot z^{n-k}.\]
    where $q$ tracks bosonic degree and $z$ tracks fermionic degree.
\end{theorem}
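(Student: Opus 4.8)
The plan is to run the $\varepsilon_\mu$-antisymmetrization argument that proved Theorem~\ref{thm:ungraded-fields}, but now keeping track of the bigrading, so that the target becomes the skewing identity \eqref{eq:parabolic-hilbert-skewing}. Since the operator $e_\mu^\perp$ is linear over the scalars $\FF[q,z]$, applying Lemma~\ref{lem:module-antisymmetrization} in each bidegree yields $e_\mu^\perp\grFrob(SR_n;q,z)=\Hilb(\varepsilon_\mu\cdot SR_n;q,z)$ for every $\mu\vdash n$ (the operator $\varepsilon_\mu$ preserves bidegree). Because the Hall pairing is nondegenerate and $\{e_\mu:\mu\vdash n\}$ is a basis of $\Lambda_n$, two elements of $\Lambda_n$ --- and hence, comparing coefficients bidegree by bidegree, two elements of $\Lambda_n\otimes\FF[q,z]$ --- agree as soon as all their $e_\mu^\perp$-images agree. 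Thus Theorem~\ref{thm:fields} is equivalent to
\[ \Hilb(\varepsilon_\mu\cdot SR_n;q,z)\;=\;e_\mu^\perp\!\Bigl(\sum_{k=1}^n C_{n,k}(\xx;q)\,z^{n-k}\Bigr)\qquad\text{for all }\mu\vdash n, \]
which is exactly \eqref{eq:parabolic-hilbert-skewing}, and this is what I would set out to prove.

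The left-hand side is read off from Lemma~\ref{lem:parabolic-dimension-equality}. Its basis $\bigsqcup_{\zero\le\gamma\le\mu}\varepsilon_\mu\cdot(\AAA_n(\mu,\gamma)\cdot\theta_{J(\mu,\gamma)})$ is bihomogeneous, the vector coming from $x_1^{a_1}\cdots x_n^{a_n}\in\AAA_n(\mu,\gamma)$ living in bosonic degree $a_1+\cdots+a_n$ and fermionic degree $\#J(\mu,\gamma)=\gamma_1+\cdots+\gamma_p$. So $\Hilb(\varepsilon_\mu\cdot SR_n;q,z)=\sum_{\zero\le\gamma\le\mu}z^{\gamma_1+\cdots+\gamma_p}A_{\mu,\gamma}(q)$, where $A_{\mu,\gamma}(q)$ denotes the degree generating function of the monomial set $\AAA_n(\mu,\gamma)$. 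The inequalities defining $\AAA_n(\mu,\gamma)$ decouple across the $\mu$-blocks exactly as in the proof of Lemma~\ref{lem:A-equals-L}, and inside the $j$-th block the exponent sequence ranges over precisely $\III(\mu_j,\gamma_j,t_{j-1})$ (with $t_0=0$ and the remaining $t_j$ read off from $\stair(J(\mu,\gamma))$ as in that lemma); hence $A_{\mu,\gamma}(q)=\prod_{j=1}^p L_q(\mu_j,\gamma_j,t_{j-1})$, where $L_q(m,\kappa,t):=\sum_{(c_1,\dots,c_m)\in\III(m,\kappa,t)}q^{c_1+\cdots+c_m}$ is a $q$-refinement of $\#\III(m,\kappa,t)$ that evaluates to an explicit short product of $q$-binomial coefficients (a $q$-analog of the cardinality count in Lemma~\ref{lem:L-equals-I}). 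Extracting the coefficient of $z^{n-k}$ --- i.e. restricting to those $\gamma$ with $\gamma_1+\cdots+\gamma_p=n-k$ --- reduces \eqref{eq:parabolic-hilbert-skewing} to the purely combinatorial identity
\[ e_\mu^\perp C_{n,k}(\xx;q)\;=\;\sum_{\substack{\zero\le\gamma\le\mu\\ \gamma_1+\cdots+\gamma_p=n-k}}\ \prod_{j=1}^p L_q(\mu_j,\gamma_j,t_{j-1})\qquad(1\le k\le n). \]

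To prove this identity I would not expand $C_{n,k}$ by hand but instead invoke an auxiliary module realizing it, following the hint in the paragraph before the theorem. The Haglund--Rhoades--Shimozono ring $R_{n,k}=\FF[\xx_n]/(e_n,e_{n-1},\dots,e_{n-k+1},x_1^k,\dots,x_n^k)$ has graded Frobenius characteristic $C_{n,k}(\xx;q)$ (in the normalization of the $q$-grading which, by Fields Conjecture~1 --- already established in \cite{RW} --- matches the bosonic grading of $(SR_n)_{*,n-k}$), and it carries an Artin-type monomial basis with well-behaved Gr\"obner theory. A parallel computation --- more routine than Lemma~\ref{lem:parabolic-dimension-equality} itself, thanks to that tame Gr\"obner theory --- antisymmetrizes this basis over $\symm_\mu$ to a bihomogeneous basis of $\varepsilon_\mu\cdot R_{n,k}$ indexed by the very same elevated-staircase data, whence $e_\mu^\perp C_{n,k}(\xx;q)=\Hilb(\varepsilon_\mu\cdot R_{n,k};q)$ equals the right-hand side above. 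As an alternative that avoids $R_{n,k}$ entirely, the right-hand side above obeys a recursion obtained by peeling off one block of $\mu$, and this can be matched against the known recursion \cite[Lem.~3.7]{HRS} for $e_\mu^\perp C_{n,k}(\xx;q)$.

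The only genuinely new input is the combinatorial identity in the penultimate display; everything preceding it is bookkeeping built on results already in hand (Lemmas~\ref{lem:module-antisymmetrization}, \ref{lem:parabolic-dimension-equality}, \ref{lem:A-equals-L}), so that identity is where the work lies. I expect the cleanest execution to run through $R_{n,k}$, since this recycles the Section~\ref{sec:Parabolic} combinatorics essentially verbatim rather than requiring $q$-binomial gymnastics; the one point demanding care there is aligning the two notions of degree (bosonic degree in $SR_n$ versus homological degree in $R_{n,k}$), and Fields Conjecture~1 is precisely the statement that makes them compatible.
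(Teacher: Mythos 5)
Your reduction of Theorem~\ref{thm:fields} to the skewing identity \eqref{eq:parabolic-hilbert-skewing}, and your reading off of the left-hand side from Lemma~\ref{lem:parabolic-dimension-equality} together with the block-by-block decoupling of Lemma~\ref{lem:A-equals-L}, track the paper exactly. The divergence --- and the gap --- is in how you handle the right-hand side.

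\textbf{The $R_{n,k}$ route has a sign-twist error.} The Haglund--Rhoades--Shimozono theorem is
\[
\grFrob(R_{n,k};q) \;=\; (\mathrm{rev}_q\circ\omega)\,C_{n,k}(\xx;q),
\]
not $\mathrm{rev}_q\,C_{n,k}(\xx;q)$. The $\omega$ is not a grading renormalization that Fields Conjecture~1 can absorb; it twists the module structure by $\sign$, and that changes what antisymmetrization computes. Using $e_\mu^\perp\circ\omega=\omega\circ h_\mu^\perp$ (and that $h_\mu^\perp$ lands in $\Lambda_0$, where $\omega$ is trivial), one finds
\[
\Hilb(\varepsilon_\mu\cdot R_{n,k};q)\;=\;e_\mu^\perp\grFrob(R_{n,k};q)\;=\;\mathrm{rev}_q\,h_\mu^\perp C_{n,k}(\xx;q),
\]
which is \emph{not} $e_\mu^\perp C_{n,k}(\xx;q)$. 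Already for $n=2$, $k=1$, $\mu=(2)$: here $C_{2,1}=e_2=s_{(1,1)}$, so $e_2^\perp C_{2,1}=1$ while $h_2^\perp C_{2,1}=0$; correspondingly $R_{2,1}=\FF$ is trivial so $\varepsilon_{(2)}\cdot R_{2,1}=0$, whereas $\varepsilon_{(2)}\cdot(SR_2)_{*,1}$ is one-dimensional, spanned by the image of $\theta_2-\theta_1$. So the pipeline ``antisymmetrize $R_{n,k}$ and compare'' gives the wrong number. Patching this by symmetrizing instead of antisymmetrizing $R_{n,k}$ fixes the Frobenius bookkeeping, but then the resulting combinatorics no longer lines up with the elevated-staircase data of Lemmas~\ref{lem:A-equals-L} and~\ref{lem:parabolic-dimension-equality} in the direct way you need; the HRS monomial basis of $R_{n,k}$ is a different set from the $\AAA_n(J)$'s, so it is not a ``parallel computation recycling Section~\ref{sec:Parabolic} verbatim.''

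\textbf{What the paper does instead.} The paper compares against the top fermionic piece of the module $\WWW_{n,k}=\Omega_n/\ann_{\Omega_n}(\delta_{n,k})$ from \cite{RW2,RW3}. This module is tailor-made for the argument: by \cite[Thm.~4.11]{RW3} its top fermionic degree has \emph{literally the same} monomial basis $\bigsqcup_{|J|=n-k}\AAA_n(J)\cdot\theta_J$ as $(SR_n)_{*,n-k}$, and by \cite{RW2} its top fermionic Frobenius characteristic is \emph{exactly} $C_{n,k}(\xx;q)$ --- no $\omega$, no $\mathrm{rev}_q$. So the $\varepsilon_\mu$-basis count from Section~\ref{sec:Parabolic} (driven by Lemma~\ref{lem:A-combinatorial-interpretation} and its refinement by $|\gamma|$) applies to $\WWW_{n,k}$ and $SR_n$ identically, yielding equal Hilbert series of $\varepsilon_\mu\cdot(-)$ in each fermionic degree, whence the $\symm_n$-isomorphism. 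This sidesteps both the sign-twist and the degree-alignment issues in one stroke. Your fallback via the recursion for $e_\mu^\perp C_{n,k}(\xx;q)$ in \cite[Lem.~3.7]{HRS} is viable --- the paper explicitly notes it is possible --- but it is not what the paper does, and you have not developed it.
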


\begin{proof}
    Fix a fermionic degree $r$ and let $k = n-r$. Consider the `superspace Vandermonde' $\delta_{n,k}$ defined in \cite{RW2}. This is the superspace element 
    \begin{equation}
    \label{eq:fields-one}
        \delta_{n,k} := \varepsilon_n \cdot (x_1^0 x_2^1 \cdots x_k^{k-1} x_{n-k+1}^{k-1} \cdots x_n^{k-1} \times \theta_{n-k+1} \cdots \theta_n)
    \end{equation}
    where $\varepsilon_n = \sum_{w \in \symm_n} \sign(w) \cdot w$.  Let $\ann_{\Omega_n}(\delta_{n,k}) := \{ f \in \Omega_n \,:\, f \odot \delta_{n,k}  = 0 \}$. The quotient ring
    \begin{equation}
    \label{eq:fields-two}
        \WWW_{n,k} := \Omega_n / \ann_{\Omega_n}(\delta_{n,k})
    \end{equation}
    is a bigraded $\symm_n$-module with top fermionic degree $r$. The module $\WWW_{n,k}$ was introduced in \cite{RW2} and further studied in \cite{RW3}. It is a consequence of \cite[Thm. 4.11]{RW3} that 
    \begin{equation} \bigsqcup_{\substack{J \subseteq [n] \\ |J| = r}} \AAA_n(J) \cdot \theta_J\end{equation}
    descends to a vector space basis of the top fermionic degree part of $\WWW_{n,k}$. Furthermore, it was proven in \cite{RW2} that 
    \begin{equation}
    \label{eq:fields-three}
        \text{coefficient of $z^{r}$ in } \grFrob(\WWW_{n,k};q,z) = C_{n,k}(\xx;q).
    \end{equation}
    Also, by e.g. \cite{HRW,HRS} we have
    \begin{equation}
    \label{eq:fields-four}
        \Frob( \FF[\OP_{n,k}] \otimes \sign) = C_{n,k}(\xx;1).
    \end{equation}
    Combining \eqref{eq:fields-three}, \eqref{eq:fields-four}, and Lemma~\ref{lem:A-combinatorial-interpretation} (and its proof) we see that 
    \begin{equation}
        \bigsqcup_{\substack{\zero \leq \gamma \leq \mu \\ \sum_j \gamma_j = r}} \{ \varepsilon_\mu \cdot (x_1^{a_1} \cdots x_n^{a_n} \times \theta_{J(\mu,\gamma)}) \,:\, x_1^{a_1}\cdots x_n^{a_n} \in \AAA_n(\mu,\gamma) \}
    \end{equation}
    descends to a basis of the top fermionic degree part of $\varepsilon_\mu \cdot \WWW_{n,k}$.  Therefore 
   \begin{align}
       \text{coefficient of $z^{r}$ in }\Hilb(\varepsilon_\mu \cdot SR_n;q,z) &= \sum_\gamma \sum_{m \in \AAA_n(\mu,\gamma)} q^{\deg(m)} \\
       &= \text{coefficient of $z^{r}$ in $\Hilb(\varepsilon_\mu \cdot \WWW_{n,k};q,z).$}
   \end{align}
   As in the proof of Theorem~\ref{thm:ungraded-fields}, this implies that the fermionic degree $r$ pieces of $SR_n$ and $\WWW_{n,k}$ are isomorphic as singly-graded $\symm_n$-modules. Since this is true for all $r$, the theorem is proved.
\end{proof}

Theorem~\ref{thm:fields} and \eqref{eq:fields-four} combine to give the module structure for the fermionic pieces of $SR_n$. This is a fermionic refinement of Theorem~\ref{thm:ungraded-fields}.

\begin{corollary}
    \label{cor:fermionic-piece}
    We have an isomorphism of ungraded $\symm_n$-modules $(SR_n)_{*,n-k} \cong \FF[\OP_{n,k}] \otimes \sign$.
\end{corollary}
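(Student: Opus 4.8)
The plan is to read off the claimed isomorphism directly from the bigraded Frobenius computation of Theorem~\ref{thm:fields} by forgetting the bosonic grading. First I would isolate the fermionic degree $n-k$ part of the formula $\grFrob(SR_n;q,z) = \sum_{\ell=1}^n C_{n,\ell}(\xx;q) \cdot z^{n-\ell}$ from Theorem~\ref{thm:fields}: extracting the coefficient of $z^{n-k}$ gives
\[
    \grFrob\big( (SR_n)_{*,n-k}; q \big) = C_{n,k}(\xx;q),
\]
where $q$ tracks the bosonic degree. Specializing $q \to 1$ collapses the bosonic grading, so the ungraded Frobenius characteristic of $(SR_n)_{*,n-k}$ is $C_{n,k}(\xx;1)$.

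Next I would invoke \eqref{eq:fields-four}, namely $\Frob(\FF[\OP_{n,k}] \otimes \sign) = C_{n,k}(\xx;1)$, to conclude that $(SR_n)_{*,n-k}$ and $\FF[\OP_{n,k}] \otimes \sign$ have equal Frobenius characteristics. Since $\FF$ has characteristic $0$, the Frobenius characteristic is a complete isomorphism invariant for finite-dimensional $\symm_n$-modules (the $s_\lambda$ are linearly independent and the multiplicities $c_\lambda$ are recovered via the Hall inner product), so the two ungraded $\symm_n$-modules are isomorphic.

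There is essentially no obstacle here; the content has already been done in Theorem~\ref{thm:fields}. The only point requiring a moment's care is the bookkeeping that the coefficient of $z^{n-k}$ in the bigraded Frobenius series is exactly $\grFrob((SR_n)_{*,n-k};q)$ by definition of $\grFrob$ and of the fermionic-degree piece $V_{*,d}$, together with the observation that setting $q=1$ in a (bi)graded Frobenius characteristic yields the ungraded one. No new ideas are needed.
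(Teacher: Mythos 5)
Your argument is correct and is exactly how the paper deduces Corollary~\ref{cor:fermionic-piece}: the paper states that Theorem~\ref{thm:fields} and equation~\eqref{eq:fields-four} combine to give the result, which is precisely the $z^{n-k}$-extraction and $q \to 1$ specialization you spelled out. No differences worth noting.
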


The numbers $\# \OP_{n,k}$ satisfy the recursion
\begin{equation}
    \label{eq:op-recursion}
    \# \OP_{n,k} = k \cdot ( \# \OP_{n-1,k-1} + \# \OP_{n-1,k} ).
\end{equation}
Equation~\eqref{eq:op-recursion} may be established bijectively as follows: given $\sigma \in \OP_{n,k}$, erase the element $n$, together with its block if $\{n\}$ is a singleton. The following refinement of \eqref{eq:op-recursion} was conjectured by Reiner \cite{Reiner}.

\begin{corollary}
    \label{cor:reiner} {\em (Reiner's Conjecture)}
    The restriction $\Res^{\symm_n}_{\symm_{n-1}}(SR_n)_{*,n-k}$ satisfies  the recursion
    \begin{multline}
        \grFrob(\Res^{\symm_n}_{\symm_{n-1}}(SR_n)_{*,n-k};q) = \\
        [k]_q \cdot \left(\grFrob((SR_{n-1})_{*,n-k};q) + \grFrob((SR_{n-1})_{*,n-k-1};q)\right).
    \end{multline}
\end{corollary}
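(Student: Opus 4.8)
The plan is to deduce this purely representation-theoretically from Corollary~\ref{cor:fermionic-piece}, which identifies $(SR_n)_{*,n-k}$ with $\FF[\OP_{n,k}] \otimes \sign$ as ungraded $\symm_n$-modules, together with a $q$-refinement of that identification. First I would recall from the proof of Theorem~\ref{thm:fields} that the fermionic degree $n-k$ part of $SR_n$ is isomorphic as a singly-graded $\symm_n$-module to the top fermionic part of $\WWW_{n,k}$, whose graded Frobenius characteristic has $z^{n-k}$-coefficient equal to $C_{n,k}(\xx;q)$. Thus the statement to be proven is equivalent to the symmetric function identity
\[
  \Res^{\symm_n}_{\symm_{n-1}} C_{n,k}(\xx;q) = [k]_q \cdot \left( C_{n-1,k-1}(\xx;q) + C_{n-1,k}(\xx;q) \right),
\]
where $\Res$ on symmetric functions means the skewing operator $p_1^\perp$ (equivalently $e_1^\perp$, which agree in this context). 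So the corollary reduces to a known recursion for the $C_{n,k}$.

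Second, I would establish that recursion. The cleanest route is via the combinatorial formula \eqref{eqn:C-combinatorial}, say $C_{n,k}(\xx;q) = \sum_{\mu \in \OMP_{n,k}} q^{\minimaj(\mu)} \xx^\mu$ (or whichever statistic makes the bookkeeping easiest). Applying $p_1^\perp$ corresponds to deleting one occurrence of the largest variable, i.e.\ removing the entry $n$ from an ordered multiset partition in a suitable normalization; this splits into the case where $n$ sits alone in a block (giving $\OP_{n-1,k-1}$ after erasing that block) and the case where $n$ shares a block (giving $\OP_{n-1,k}$), and in each case the choice of which of the $k$ blocks $n$ went into, together with the change in the statistic, contributes the factor $[k]_q$. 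Alternatively — and perhaps more safely — I would invoke the recursion for $e_\mu^\perp C_{n,k}$ recorded in \cite[Lem.~3.7]{HRS} (cited in the paragraph before Theorem~\ref{thm:fields}), specialized to $\mu = (2,1^{n-2})$ so that $e_\mu^\perp$ becomes $e_1^\perp$ on the degree-$n$ piece; this gives exactly the displayed identity with minimal effort.

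Finally, I would translate back: apply $\grFrob$ to $(SR_n)_{*,n-k}$, use the singly-graded isomorphism with the top part of $\WWW_{n,k}$ and \eqref{eq:fields-three} to rewrite it as $C_{n,k}(\xx;q)$, take $\Res^{\symm_n}_{\symm_{n-1}}$, apply the symmetric-function recursion, and then use \eqref{eq:fields-three} again (now for $n-1$) to recognize $C_{n-1,k-1}(\xx;q)$ and $C_{n-1,k}(\xx;q)$ as $\grFrob$ of $(SR_{n-1})_{*,n-k}$ and $(SR_{n-1})_{*,n-k-1}$ respectively. The main obstacle is verifying the symmetric-function recursion for $C_{n,k}$ with the correct $q$-power bookkeeping; if the bijective argument on ordered multiset partitions proves fiddly, falling back on the published recursion in \cite{HRS} sidesteps it entirely, so I do not anticipate a genuine difficulty — just a need for care about which statistic and which normalization of $\OMP_{n,k}$ make the factor $[k]_q$ transparent.
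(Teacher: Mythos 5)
Your proposal follows essentially the same strategy as the paper: use $\Frob(\Res^{\symm_n}_{\symm_{n-1}} V) = e_1^\perp \Frob(V)$, plug in Theorem~\ref{thm:fields} to identify $\grFrob((SR_n)_{*,n-k};q)$ with $C_{n,k}(\xx;q)$, reduce to the symmetric-function identity $e_1^\perp C_{n,k} = [k]_q(C_{n-1,k-1} + C_{n-1,k})$, and cite a known recursion. (The detour through $\WWW_{n,k}$ is unnecessary once Theorem~\ref{thm:fields} is available, but it is harmless.) The paper cites \cite[Lem.\ 3.13, $j=1$]{RW3} for the identity; your fallback citation is \cite[Lem.\ 3.7]{HRS}, which the paper itself mentions is usable for this kind of computation, so that is a reasonable alternative.

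However, the remark about specializing to $\mu = (2,1^{n-2})$ ``so that $e_\mu^\perp$ becomes $e_1^\perp$ on the degree-$n$ piece'' does not work: for any partition $\mu \vdash n$, the operator $e_\mu^\perp = e_{\mu_1}^\perp e_{\mu_2}^\perp \cdots$ sends a degree-$n$ symmetric function to a scalar, not to a degree-$(n-1)$ symmetric function, so it cannot recover $e_1^\perp$. If \cite[Lem.\ 3.7]{HRS} is phrased as a formula for $e_d^\perp C_{n,k}$ with a single integer $d$, you should invoke it at $d=1$ directly rather than via a partition $\mu$; if it only gives $e_\mu^\perp C_{n,k}$ for $\mu \vdash n$ (a scalar), it would not yield the desired symmetric-function identity and you would need a different source, such as the one the paper uses. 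The bijective sketch via ordered multiset partitions is also hand-wavy about how removing the maximal entry matches $p_1^\perp$ and preserves the statistic with the stated $[k]_q$ factor, but you correctly flag it as the fiddlier option.
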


\begin{proof}
    For any $\symm_n$-module $V$, one has $\Frob(\Res^{\symm_n}_{\symm_{n-1}} V) = e_1^\perp \Frob(V)$. By Theorem~\ref{thm:fields} and Corollary~\ref{cor:fermionic-piece} we are reduced to showing 
    \begin{equation}
    \label{eq:c-skewing-by-1}
        e_1^\perp C_{n,k}(\xx;,q) = [k]_q \cdot ( C_{n-1,k-1}(\xx;q) + C_{n-1,k}(\xx;q)).
    \end{equation}
    Equation~\eqref{eq:c-skewing-by-1} follows from \cite[Lem 3.13, $j=1$]{RW3}.
\end{proof}

\section{Springer theory}
\label{sec:Springer}

%\brendon{In Springer Theory, it's more common to use reductive algebraic groups and Borel subgroups (like $GL_n$ and $B$) rather than compact Lie groups and compact tori (like $\SU(n)$ and $T \cong (S^1)^n$). This is because various facts  in algebraic geometry apply in the former context but not the latter. These situations are homotopy-equivalent, so Reeder's Theorem~\ref{thm:reeder} on cohomology still holds either way. Should I use $(G,B)$ language instead?}  

While various geometric interpretations the symmetric function $C_{n,k}(\xx;q)$ representing individual fermionic pieces of $SR_n$ are available \cite{GLW,GGG,HRS2,PR}, Springer theory leads to an interpretation of $SR_n$ itself which applies to arbitrary Lie type. We start with some  coinvariant notation.

Let $W$ be a reflection group acting irreducibly on its reflection representation $V$. We write $S^{(k \mid j)}$ for the $\ZZ_{\geq 0}^k \times \ZZ_{\geq 0}^j$-graded algebra 
\[ S^{(k \mid j)} := \FF[\overbrace{V \oplus \cdots \oplus V}^k] \otimes \wedge ( \overbrace{V \oplus \cdots \oplus V}^j ).\]
If $\dim V = n$ then $S^{(1 \mid 1)} = \Omega_n$ is the superspace ring. The natural action of $W$ on $V$ induces an action of $W$ on $S^{(k \mid j)}$. We write $(S^{(k \mid j)})^W_+ \subseteq S^{(k \mid j)}$ for the $W$-invariants with vanishing constant term. We define the {\em generalized coinvariant ring} by
\[ R^{(k \mid j)}_W := S^{(k \mid j)}/ I_W \quad \quad \text{where $I_W \subseteq S^{(k \mid j)}$ is the ideal generated by $(S^{(k \mid j)})^W_+$.}\]
In Proposition~\ref{prop:SR-geometric} below we use a result of Reeder \cite{Reeder} to give a geometric interpretation of $R^{(1 \mid 1)}_W$ for any Weyl group $W$.

We apply Springer theory in the context of compact Lie groups; see \cite{Reeder}. Let $G$ be a compact simple Lie group and let $T \subseteq G$ be a maximal torus. Write $W := N_G(T)/T$ for the Weyl group of $G$. Then $W$ acts on $T$ by conjugation. The space $G/T$ is homotopy-equivalent to the flag variety associated to $G$ and carries an action of $W$ given by $w \cdot gT = g n^{-1} T$ where $w = nT$. Define $\widetilde{G} := G/T \times T$. Then $W$ acts diagonally  on $\widetilde{G}$. We have a map
\begin{equation}
\label{eq:p-map}
    p: \widetilde{G} \longrightarrow G, \quad\quad p: (gT,t) \mapsto g t g^{-1}.
\end{equation}
The map $p$ is a homotopy model for the usual Grothendieck--Springer resolution ${\mathsf G} \times^{\mathsf B} {\mathsf B} \to {\mathsf G}$ obtained by intersection with a unitary group.

Let $H^*(-)$ denote cohomology with coefficients in $\FF$. Let $V := \tttt$ be the Lie algebra of $T$ with its natural action of $W$. Borel's Theorem \cite{Borel} identifies $H^*(G/T) = R^{(1 \mid 0)}_W$. Furthermore, the torus $T$ has 
$H^*(T) \cong \wedge V = R^{(0 \mid 1)}_W$
where we used the result of Steinberg (see \cite[Thm. A, \S 24-3, p. 250]{Kane}) that $\wedge^i V$ is a nontrivial irreducible $W$-module for all $0 < i \leq \dim V$.
The K\"unneth Theorem implies
\begin{equation}
    H^*(\widetilde{G}) = H^*(G/T \times T) = H^*(G/T) \otimes H^*(T) = R^{(1\mid 0)}_W \otimes R^{(0\mid 1)}_W.
\end{equation}
The morphism $p: \widetilde{G} \to G$ induces a map of cohomology rings
\begin{equation}
    p^*: H^*(G) \longrightarrow H^*(\widetilde{G}).
\end{equation}

\begin{theorem}
    \label{thm:reeder} {\em (Reeder \cite[Prop. 6.1]{Reeder})}
   The map $p^*$ is injective with image $H^*(\widetilde{G})^W$.
\end{theorem}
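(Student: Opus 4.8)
The plan is to deduce Reeder's Theorem from a combination of Borel's presentation of $H^*(G/T)$, the Steinberg fact that $\wedge^i V$ is an irreducible $W$-module, and a dimension count comparing $H^*(\widetilde{G})^W$ with the image of $p^*$. The statement packages two assertions: (i) $p^*$ is injective, and (ii) its image is exactly the $W$-invariant subring $H^*(\widetilde{G})^W = (R^{(1\mid 0)}_W \otimes R^{(0\mid 1)}_W)^W$. I would treat these separately, establishing the easy inclusion $\mathrm{im}(p^*) \subseteq H^*(\widetilde{G})^W$ first and then matching dimensions.

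\textbf{Step 1: $W$-equivariance and the easy inclusion.} First I would observe that the map $p:\widetilde{G}\to G$ is $W$-invariant: for $w = nT$ we have $p(w\cdot(gT,t)) = p(gn^{-1}T, ntn^{-1}) = (gn^{-1})(ntn^{-1})(gn^{-1})^{-1} = gtg^{-1} = p(gT,t)$. Hence the pullback $p^*: H^*(G) \to H^*(\widetilde{G})$ lands in the $W$-fixed subring $H^*(\widetilde{G})^W$, giving the inclusion $\mathrm{im}(p^*) \subseteq H^*(\widetilde{G})^W$ for free. This is the geometric heart of the argument and requires nothing beyond functoriality of cohomology.

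\textbf{Step 2: Identify both sides as graded vector spaces.} On the target side, the Künneth identification $H^*(\widetilde{G}) = R^{(1\mid 0)}_W \otimes R^{(0\mid 1)}_W = H^*(G/T)\otimes\wedge V$ is already recorded in the excerpt. Taking $W$-invariants, I want $\dim_\FF (H^*(G/T)\otimes\wedge V)^W$. Since $H^*(G/T) \cong \FF[W]$ as a $W$-module (Chevalley/Borel: the coinvariant ring of $W$ is the regular representation), and since $(\FF[W]\otimes M)^W \cong M$ as vector spaces for any $W$-module $M$ (Frobenius reciprocity / the standard fact that $\Hom_W(\FF[W], M)\cong M$), we get $\dim_\FF H^*(\widetilde{G})^W = \dim_\FF \wedge V = 2^n$ where $n = \dim V$. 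On the source side, $H^*(G;\FF)$ is an exterior algebra on $n$ generators of odd degree (a compact simple Lie group $G$ has the rational homotopy type of a product of odd spheres $S^{2d_1-1}\times\cdots\times S^{2d_n-1}$, where $d_1,\dots,d_n$ are the fundamental degrees of $W$), so $\dim_\FF H^*(G) = 2^n$ as well. Thus the two dimensions agree, and it suffices to prove $p^*$ is injective — the equality $\mathrm{im}(p^*) = H^*(\widetilde{G})^W$ then follows from Step 1 by dimension count.

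\textbf{Step 3: Injectivity of $p^*$.} This is the step I expect to be the main obstacle, since injectivity of a pullback is not automatic. The cleanest route is to produce a section up to homotopy, or more precisely to show the composite $H^*(G)\xrightarrow{p^*} H^*(\widetilde G)$ is split injective by exhibiting a map $G \to \widetilde G$ (or a correspondence) whose composite with $p$ is degree one onto a fiber, so that a transfer/Gysin argument applies. Concretely, the fiber of $p$ over a regular element is a single $W$-orbit of tori and $p$ is generically finite of degree $|W|$; pairing the Gysin pushforward $p_!$ with $p^*$ gives $p_! p^* = \chi$ for a nonzero scalar (or nonzero-divisor) $\chi$ related to $|W|$, which over a field of characteristic zero forces $p^*$ injective. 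Alternatively — and this is probably how Reeder proceeds — one uses the explicit description of $H^*(G)$ as the primitive elements / image of transgression in the Leray–Hirsch spectral sequence of $T\to G\to G/T$ versus the trivialized bundle $T \to \widetilde G \to G/T$, and checks that the transgression generators of $H^*(G)$ pull back to the $W$-invariant ``characteristic class'' generators of $\wedge V$ living in $H^*(\widetilde G) = H^*(G/T)\otimes \wedge V$. Since these $n$ classes are exterior-algebra generators on the nose, $p^*$ is a split injection of graded algebras. I would cite \cite{Reeder} for the detailed spectral sequence computation rather than reproduce it, as it is the technical crux and orthogonal to the superspace application we actually need.
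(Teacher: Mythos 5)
The paper does not prove this statement; it is quoted as Reeder's result \cite[Prop.\ 6.1]{Reeder} and used as a black box in the derivation of Proposition~\ref{prop:SR-geometric}. So there is no paper proof to compare against word-for-word, and the appropriate question is whether your sketch is a valid proof.

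Your three-step outline does add up to a correct argument. Step~1 is exactly right: the computation $p(w\cdot(gT,t)) = p(gT,t)$ establishes $W$-invariance of $p$, hence $\mathrm{im}(p^*) \subseteq H^*(\widetilde{G})^W$ by functoriality alone. Step~2 is also correct: $H^*(G/T) \cong \FF[W]$ as an ungraded $W$-module by Borel and Chevalley, and for any $W$-module $M$ one has $\dim_\FF(\FF[W]\otimes M)^W = \dim_\FF M$ (via the standard twisting isomorphism $\FF[W]\otimes M \cong \FF[W]^{\oplus \dim M}$), giving $\dim_\FF H^*(\widetilde{G})^W = \dim_\FF \wedge V = 2^{\dim V}$; on the other side, Hopf--Samelson gives $H^*(G;\FF)$ as an exterior algebra on $\dim V$ odd-degree primitives, so $\dim_\FF H^*(G) = 2^{\dim V}$ as well. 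For Step~3, the cleanest of your proposed routes is the degree argument: $p$ is a proper surjection of degree $|W|$ between compact oriented manifolds of equal dimension (the fiber over a regular element has exactly $|W|$ points, as you note), so by Poincar\'e duality the composition $p_! \circ p^*$ is multiplication by $|W|$, which is invertible in $\FF$ of characteristic $0$; hence $p^*$ is injective. Combined with Steps~1 and 2 this forces $\mathrm{im}(p^*) = H^*(\widetilde{G})^W$. Your phrasing about ``a map $G\to\widetilde G$ whose composite with $p$ is degree one onto a fiber'' is muddled and should be dropped --- what is wanted is precisely the Poincar\'e-duality transfer, not a section --- but you recover and state the correct mechanism immediately afterward, and the spectral-sequence alternative you mention is essentially how Reeder himself argues. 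So the sketch is sound; in a paper one would either spell out the degree-$|W|$ argument in full or simply cite Reeder as the present paper does.
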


Reeder's Theorem~\ref{thm:reeder} gives the following geometric interpretation of $R^{(1 \mid 1)}_W$.

\begin{proposition}
    \label{prop:SR-geometric}
    Write $H^+(G) := \bigoplus_{i > 0} H^i(G)$ for the positive degree part of the cohomology of $G$ and let \begin{equation} J := \left( p^*(H^+(G)) \right) \subseteq H^*(\widetilde{G})\end{equation} be the ideal $J \subseteq H^*(\widetilde{G})$ generated by the image of $H^+(G)$ under $p^*$. There holds an isomorphism
    \begin{equation}
        R^{(1 \mid 1)}_W \cong H^*(\widetilde{G})/J
    \end{equation}
    of bigraded $W$-algebras.
\end{proposition}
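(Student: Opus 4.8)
The plan is to present both sides of the claimed isomorphism as quotients of $S^{(1 \mid 1)}$ by ideals and to match those ideals. Fix a system $f_1,\dots,f_n \in \FF[V]^W$ of basic invariants, so that $R^{(1 \mid 0)}_W = \FF[V]/(f_1,\dots,f_n)$. By the K\"unneth theorem $H^*(\widetilde{G}) = H^*(G/T) \otimes_\FF H^*(T)$; by Borel's theorem \cite{Borel} $H^*(G/T) = R^{(1 \mid 0)}_W$; and by Steinberg's computation of $H^*(T)$ one has $H^*(T) = \wedge V = R^{(0 \mid 1)}_W$. Since the ideal of $S^{(1 \mid 1)} = \FF[V] \otimes \wedge V$ generated by $f_1,\dots,f_n$ is precisely the ideal of $\FF[V]$ they generate, tensored with $\wedge V$, combining these three (multiplicative and $W$-equivariant) identifications yields an isomorphism of bigraded $W$-algebras
\[ H^*(\widetilde{G}) \;\cong\; S^{(1 \mid 1)}/(f_1,\dots,f_n). \]
I would let $\pi \colon S^{(1 \mid 1)} \twoheadrightarrow H^*(\widetilde{G})$ denote the corresponding surjection of bigraded $W$-algebras, so that $\ker \pi = (f_1,\dots,f_n)$.

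Next I would transfer the ideal $J$ across $\pi$. Since $\pi$ is a $W$-equivariant surjection and $\FF$ has characteristic $0$, the functor of $W$-invariants is exact, so $\pi$ restricts to a surjection $(S^{(1 \mid 1)})^W \twoheadrightarrow H^*(\widetilde{G})^W$; as $\pi$ is bigraded it carries the augmentation ideal $(S^{(1 \mid 1)})^W_+$ onto $H^*(\widetilde{G})^W_+$. By Reeder's Theorem~\ref{thm:reeder} the map $p^*$ is injective with image $H^*(\widetilde{G})^W$, and since $p^*$ preserves cohomological degree this forces $p^*(H^+(G)) = H^*(\widetilde{G})^W_+$; hence $J = \bigl( H^*(\widetilde{G})^W_+ \bigr) \cdot H^*(\widetilde{G})$. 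Consequently $\pi$ carries the superspace coinvariant ideal $I_W = \bigl( (S^{(1 \mid 1)})^W_+ \bigr) \cdot S^{(1 \mid 1)}$ onto $J$. Because $\ker \pi = (f_1,\dots,f_n) \subseteq I_W$, the standard identity $\pi^{-1}(J) = \pi^{-1}(\pi(I_W)) = I_W + \ker \pi = I_W$ holds, whence
\[ H^*(\widetilde{G})/J \;\cong\; S^{(1 \mid 1)}/\pi^{-1}(J) \;=\; S^{(1 \mid 1)}/I_W \;=\; R^{(1 \mid 1)}_W \]
as bigraded $W$-algebras, which is the proposition.

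I expect the one point requiring care to be the bookkeeping of the two compatible gradings through the K\"unneth, Borel and Steinberg identifications — in particular the customary doubling of cohomological degree relative to bosonic degree — and the verification that $\pi$ and $p^*$ are honest maps of bigraded $W$-algebras; once this is in place the argument above is purely formal. It is worth noting that Solomon's presentation of the invariant ring $(S^{(1 \mid 1)})^W$ plays no role here: the proof uses only the definition of $I_W$ as the ideal generated by the positive-degree $W$-invariants of $S^{(1 \mid 1)}$, together with the evident containment $(f_1,\dots,f_n) \subseteq I_W$.
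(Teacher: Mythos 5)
Your proposal is correct and follows essentially the same route as the paper: both reduce to the surjection $\pi\colon S^{(1\mid 1)}\twoheadrightarrow H^*(\widetilde G)$ coming from Borel + Steinberg + K\"unneth, identify $J$ with the ideal generated by the positive-degree $W$-invariants of $H^*(\widetilde G)$ via Reeder's theorem, and use the crucial containment $\ker\pi\subseteq I_W$ together with characteristic-$0$ averaging to match the ideals. The only difference is presentational: where you invoke exactness of $(-)^W$ to get $\pi(I_W)=J$ directly, the paper first proves a reusable abstract lemma about $(A\otimes B)/I_{A\otimes B}\cong (A/I_A\otimes B/I_B)/K$ by writing out Reynolds-averaged generators of $K$ and lifting them by hand, then specializes to $A=\FF[V]$, $B=\wedge V$.
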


Our derivation of Proposition~\ref{prop:SR-geometric} from Theorem~\ref{thm:reeder} uses the ubiquitous representation theoretic technique of averaging over $W$. This is permissible since $\FF$ has characteristic 0.

\begin{proof}
    Let $A = \bigoplus_{d \geq 0} A_d$ and $B = \bigoplus_{d \geq 0} B_d$ be graded commutative or anticommutative $\FF$-algebras which carry graded actions of $W$. Then $A \otimes B$ inherits a grading $(A \otimes B)_d = \oplus_{i+j=d} A_i \otimes B_j$ and a graded $W$-action $w \cdot (a \otimes b) := (w \cdot a) \otimes (w \cdot b).$
    
    Let $A^W_+ \subseteq A, B^W_+ \subseteq B, (A \otimes B)^W_+ \subseteq A \otimes B$ be the spaces of $W$-invariants with vanishing constant term. Let $I_A = (A^W_+), I_B = (B^W_+), I_{A \otimes B} = ((A \otimes B)^W_+)$ be the ideals generated by these spaces. There is a natural surjection of graded algebras
    \begin{equation}
        \pi: A \otimes B \twoheadrightarrow A/I_A \otimes B/I_B.
    \end{equation}
    Let $K \subseteq (A/I_A \otimes B/I_B)$ be the ideal generated by $W$-invariants in $A/I_A \otimes B/I_B$ with vanishing constant term. We claim that $\pi^{-1}(K) = I_{A \otimes B}$. 

    The containment $I_{A \otimes B} \subseteq \pi^{-1}(K)$ is may be checked on generators of $I_{A \otimes B}$. On the other hand, since $\FF$ has characteristic 0, the ideal $K \subseteq A/I_A \otimes B/I_B$ is generated by elements of the form
    \begin{equation}
        f_{a,b} := \sum_{w \in W} w \cdot ((a + I_A) \otimes (b + I_B)) \quad \text{$a \in A, b \in B$ homogeneous with $\deg(a) + \deg(b) > 0.$}
    \end{equation}
    If $h \in \pi^{-1}(K)$ there exists a finite expression
    \begin{equation}
        \pi(h) = \sum_{a,b} \left( \sum_i (q_{a,b,i} + I_A) \otimes (p_{a,b,i} + I_B)\right) \cdot f_{a,b}
    \end{equation}
    for some $q_{a,b,i} \in A$ and $p_{a,b,i} \in B$. One has 
    \begin{equation}
        h' := \sum_{a,b} \left( \sum_i q_{a,b,i} \otimes p_{a,b,i} \right) \cdot \left( \sum_{w \in W} w \cdot (a \otimes b) \right) \in I_{A \otimes B}
    \end{equation}
    and $\pi(h) = \pi(h')$ so that 
    \begin{equation}
        h - h' \in \Ker(\pi) = I_A \otimes B + A \otimes I_B \subseteq I_{A \otimes B}.
    \end{equation}
    It follows that $h \in I_{A \otimes B}$. The map $\pi$ therefore gives rise to an isomorphism
    \begin{equation}
        \label{eq:abstract-isomorphism}
        (A \otimes B)/I_{A \otimes B} \cong (A/I_A \otimes B/I_B)/K.
    \end{equation}
    To prove the proposition, we take $A = \FF[V]$ and $B = \wedge V$. The left-hand side of \eqref{eq:abstract-isomorphism} becomes $R^{(1 \mid 1)}_W$ while Theorem~\ref{thm:reeder} identifies the right-hand side of \eqref{eq:abstract-isomorphism} with $H^*(\widetilde{G})/J$.
\end{proof}

In type A, our results on $SR_n$ combine with Proposition~\ref{prop:SR-geometric} as follows. Let $G = SU(n)$ and let $T \subseteq G$ be the diagonal subgroup.
Write $H^\Top(X)$ for the top-degree nonvanishing cohomology of a space $X$.  Recall that a {\em composition} $\alpha$ of $n$ is a list $\alpha = (\alpha_1,\dots,\alpha_k)$ of positive integers which sum to $n$. For any such $\alpha$, choose $t_\alpha \in T$ whose $\symm_n$ stabilizer is $\symm_\alpha = \symm_{\alpha_1} \times \cdots \times \symm_{\alpha_k}$. The fibers $p^{-1}(t_\alpha) \subseteq G/T \times T$ are related to $SR_n$ as follows.

\begin{corollary}
    \label{cor:fiber} Let $J \subseteq H^*(G/T \times T)$ be the ideal generated by $p^*(H^+(G))$. We have isomorphisms of ungraded $\symm_n$-modules
    \begin{equation}
        H^*(G/T \times T)/J \cong SR_n \cong \FF[\OP_n] \otimes \sign \cong   \bigoplus_{\alpha} H^\Top(p^{-1}(t_\alpha))
    \end{equation}
    where the direct sum is over all compositions $\alpha$ of $n$.
\end{corollary}

\begin{proof}
    The isomorphisms $H^*(G/T \times T)/J \cong SR_n \cong \FF[\OP_n] \otimes \sign$ follow from Theorem~\ref{thm:ungraded-fields} and Proposition~\ref{prop:SR-geometric}. We establish the final isomorphism $\FF[\OP_n] \otimes \sign \cong \bigoplus_\alpha H^\Top(p^{-1}(t_\alpha))$ as follows.

    For any composition $\alpha = (\alpha_1,\dots,\alpha_k)$ of $n$, let $L_\alpha = SU(\alpha_1) \times \cdots \times SU(\alpha_k) \subseteq G$ be the Levi subgroup associated to $\alpha$. Let $T_\alpha := T \cap L_\alpha$. The K\"unneth Theorem and Borel's presentation of $H^*(\Fl(n))$ imply 
    \begin{equation}
        H^*(L_\alpha/T_\alpha) \cong H^*(\Fl(\alpha_1)) \otimes \cdots \otimes H^*(\Fl(\alpha_k)) \cong R_{\alpha_1} \otimes \cdots \otimes R_{\alpha_k}.
    \end{equation}
    In particular, the top-degree part $H^\Top(L_\alpha/T_\alpha) \cong \sign_{\symm_\alpha}$ carries the sign representation of $\symm_\alpha$. On the other hand, one calculates that the fiber $p^{-1}(t_\alpha)$ is a disjoint union
    \begin{equation}
        p^{-1}(t_\alpha) = \bigsqcup_{w \symm_{\alpha} \in \symm_n/\symm_{\alpha}} X_\alpha
    \end{equation}
    over left cosets $w \symm_{\alpha} \in \symm_n/\symm_\alpha$ where each $X_\alpha$ is a copy of $L_\alpha/T_\alpha$. One has the induction product
    \begin{equation}
        H^\Top(p^{-1}(t_\alpha)) \cong \Ind_{\symm_{\alpha}}^{\symm_n} H^\Top(L_\alpha/T_\alpha) \cong \Ind_{\symm_\alpha}^{\symm_n}(\sign_{\symm_\alpha}) \cong \FF[\OP_\alpha] \otimes \sign
    \end{equation}
    where $\OP_\alpha = \{ (B_1 \mid \cdots \mid B_k) \in \OP_n \,:\, \# B_i = \alpha_i \text{ for all $i$} \}$. The desired $\symm_n$-module isomorphism $\bigoplus_{\alpha} H^\Top(p^{-1}(t_\alpha)) \cong \FF[\OP_n] \otimes \sign$ follows by taking a direct sum over all $\alpha$.
\end{proof}

%In Springer theory, one usually works with the algebraic group $G = GL(n)$ and its upper triangular subgroup $B$ rather than $\SU(n)$ and $T$. One takes $\widetilde{G} := (G \times B)/\sim$ where $(g,b) \sim (g (b')^{-1}, b' b(b')^{-1})$ for $g \in G$ and $b \in B$. One uses the morphism of varieties $p: \widetilde{G} \to G$ given by $p: [g,b] \mapsto gbg^{-1}$. (This is equivalent to the situation in Corollary~\ref{cor:fiber} up to homotopy.) The fibers $p^{-1}(b)$ for unipotent and regular semisimple elements $b \in B$ play a central role in representation theory. If $\mu \vdash n$ and $u \in B$ is a unipotent element with Jordan type $\mu$, Springer proved \cite{Springer} that $H^\Top(p^{-1}(u))$ carries the irreducible $\symm_n$-module $V^\mu$. Corollary~\ref{cor:fiber} involves fibers over elements which are semisimple but not necessarily regular. 

It would be interesting to have a purely geometric proof of Corollary~\ref{cor:fiber}.

%\andy{Can we say anything about the grading(s) here?} \brendon{Hard to say! In the classical Springer situation, one shows that the fiber $\mathcal{B}_\mu$ over a unipotent $u_\mu$ of Jordan type $\mu$ has cohomology $H^*(\mathcal{B}_\mu)$ which carries an action of $\symm_n$. This action has a difficult definition involving sheaves. Algebraic geometry shows that $H^*(\mathcal{B}_\mu)$ carries the coset representation $\symm_n/\symm_\mu$ as an ungraded module, and that the top degree piece carries the irrep $V^\mu$, but doesn't tell you the graded module structure in general. (This is the work of Garsia--Procesi related to the charge statistic.) In our setting, the top-degree pieces of the $p^{-1}(t_\alpha)$ do not line up in an obvious way with $SR_n$. For example, when $n = 3$ one has top-degrees 3 for $\alpha = (1,1,1)$, 1 for $\alpha = (2,1)$ or $(1,2)$, and 0 for $\alpha = 3$. Taking the disjoint union into account gives a `Hilbert series' $q^3 + 6 q^1 + 6$, which does not relate to $SR_3$ in any obvious way. (Here I am halving the cohomological gradings.) Tom Braden and Minh-Tam Trinh didn't know what was going on here -- maybe we should ask Mark Haiman in Italy?}

\section{Conclusion}
\label{sec:Conclusion}

In this paper we proved that the module structure of $R^{(1 \mid 1)}_W$ is governed by ordered set partitions when $W$ is of type A. We propose a combinatorial model which approximates the quotient ring $R^{(1\mid 1)}_W$ in general.
Let $\mathbb{S} \subset V$ be a sphere centered at the origin. The reflecting hyperplanes of $W$ subdivide $\mathbb{S}$ into faces of various dimensions resulting in the {\em Coxeter complex} $\OP_W$. The $k$-dimensional faces $\OP_{W,k} \subseteq \OP_W$ carry an action of $W$ for each $k$.

The fermionic Hilbert series $\Hilb(R^{(1 \mid 1)}_W;1,z)$ has coefficient sequence given by the reversed $f$-vector of $\OP_W$ in many cases. In type A this follows from Fields Conjecture 1. In forthcoming work, Bhattacharya proves \cite{Bhattacharya} this coincidence in types BC. However, in type F$_4$ one has $$\Hilb(R^{(1|1)}_W;1,q) = 1152+ 2304 z + \mathbf{1396} z^2 + \mathbf{244} z^3 + z^4$$ while the reversed $f$-vector of $\OP_W$, $(1152,2304,\mathbf{1392},\mathbf{240},1)$, is slightly smaller. We conjecture an equivariant version of this phenomenon.

\begin{conjecture}
    \label{conj:surjection}
    Let $\RR[\OP_W]$ be the permutation module afforded by the Coxeter complex $\OP_W$. There exists a $W$-equivariant surjection
    \[ R^{(1\mid1)}_W \twoheadrightarrow \RR[\OP_W] \otimes \det\]
    where $\det$ is the 1-dimensional determinant representation of $W$. In fact, for any $k$, there is a $W$-equivariant surjection \[ (R^{(1 \mid 1)}_W)_{*,n-k} \twoheadrightarrow \RR[\OP_{W,k}] \otimes \det.\]
\end{conjecture}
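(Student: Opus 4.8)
Since the statement is a conjecture, the following is a program rather than a finished proof. Over a field of characteristic $0$ a $W$-equivariant surjection $M\twoheadrightarrow N$ of finite-dimensional modules exists if and only if $[M:\chi]\geq[N:\chi]$ for every irreducible character $\chi$ of $W$, so the first task is to describe the target intrinsically. Faces of the Coxeter complex of a fixed type $J\subseteq S$ form one $W$-orbit, isomorphic as a $W$-set to $W/W_J$; since $\Res_{W_J}\det=\det_{W_J}$ (each reflection of $W_J$ is a reflection of $W$, so acts by $\det=-1$), the projection formula gives
\[ \RR[W/W_J]\otimes\det \;\cong\; \Ind_{W_J}^{W}\!\big(\Res_{W_J}\det\big) \;\cong\; \Ind_{W_J}^{W}\!\big(\det\nolimits_{W_J}\big), \]
where $\det_{W_J}$ is the determinant character of $W_J$ on its reflection representation $V_J\subseteq V$. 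In type A, $(R^{(1\mid1)}_{\symm_n})_{*,n-k}\cong(SR_n)_{*,n-k}$ has fermionic degree $n-k$ and ordered set partitions with $k$ blocks correspond to subsets $J\subseteq S$ with $|J|=n-k$, so the conjecture is equivalent to the assertion that for every $0\leq j\leq\dim V$ there is a $W$-equivariant surjection
\[ (R^{(1\mid1)}_W)_{*,j} \;\twoheadrightarrow\; \bigoplus_{\substack{J\subseteq S\\ |J|=j}} \Ind_{W_J}^{W}\!\big(\det\nolimits_{W_J}\big). \]
In type A this is Corollary~\ref{cor:fermionic-piece}, and in fact an isomorphism; for general $W$ it cannot be injective (already in type $F_4$, by the computation preceding the conjecture), so dimension counting will not suffice and the surjection must be produced explicitly.

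The plan is to generalize the harmonic-side argument of Section~\ref{sec:Parabolic}. Identify $R^{(1\mid1)}_W$ with its inverse system $SH_W\subseteq S^{(1\mid1)}=\FF[V]\otimes\wedge V$, let $\delta_W$ be the product of the positive roots, and note that the analogue of Lemma~\ref{lem:alternating-closure} holds: $\varepsilon_{W_J}\cdot SH_W$ contains $\delta_W$ and is closed under the higher Euler operators and under $f\odot(-)$ for $f\in\FF[V]^{W_J}$. The abstract operators $\DDD^\TT_\mu$ were built precisely to exploit such closure, and their construction (Observation~\ref{obs:column-operations} through Definition~\ref{def:D-operators}) is phrased in linear algebra that adapts with $\FF[\xx_n]^{\symm_\mu}$ replaced by $\FF[V]^{W_J}$; one expects that for each ``$W_J$-translation datum'' $\TT$ it yields an element $\DDD^\TT_{W_J}(\delta_W)\in\varepsilon_{W_J}\cdot SH_W$ of the predicted fermionic degree whose leading fermionic term is supported on $V_J$, with coefficient of the form $(\text{Schur-type polynomial})\times f_J$ applied to $\delta_W$ (compare Lemma~\ref{lem:abstract-leading}). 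Ranging over $\TT$ and over all $J$ with $|J|=j$, sufficiently many such elements would realize the $\det_{W_J}$-isotypic data of each $\Ind_{W_J}^W\det_{W_J}$ inside $(R^{(1\mid1)}_W)_{*,j}$ via Frobenius reciprocity, giving the required surjection.

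An alternative route, closer in spirit to Section~\ref{sec:Springer}, is to realize the target geometrically. In the Reeder model $R^{(1\mid1)}_W\cong H^*(\widetilde G)/J$ of Proposition~\ref{prop:SR-geometric}, restriction of classes to the Levi flag subvarieties $L_J/T\subseteq G/T$, followed by extraction of the top cohomology $H^{\Top}(L_J/T)\cong\det_{W_J}$, should produce the map $(R^{(1\mid1)}_W)_{*,j}\to\bigoplus_{|J|=j}\Ind_{W_J}^W\det_{W_J}$, generalizing Corollary~\ref{cor:fiber}. The obstacle peculiar to this approach is that the fermionic grading (the $H^*(T)$-degree) is invisible to restriction to a single fiber $p^{-1}(t)$, which kills all positive fermionic degree; so to obtain the refined statement rather than just its ungraded consequence one needs a grading-aware restriction --- along a family degenerating $t$ into a stratum, or a residue/logarithmic map along the reflection arrangement --- whose correct formulation is not clear to me.

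The main obstacle overall is the absence, at present, of a type-independent analogue of Theorem~\ref{thm:colon-ideal-basis}: certifying that the leading terms $(\text{Schur-type})\times f_J \odot\delta_W$ of the operators above are nonzero, and that enough of them remain independent simultaneously across all $J$ of a given size (this simultaneous independence, not membership in a single $\varepsilon_{W_J}\cdot SH_W$, controls the total multiplicity), requires understanding the colon ideals $(I_W:f_J)$ --- equivalently, the module structure of Weyl-group analogues of the quotients $\WWW_{n,k}$ used in the proof of Theorem~\ref{thm:fields}. In type A this understanding came from Solomon--Terao algebras; no uniform replacement is known. Note finally that, because the conjecture is strict outside type A, even a complete such toolkit would only establish the surjection, in accordance with the $F_4$ data; conversely, any method computing the exact $W$-character of each $(R^{(1\mid1)}_W)_{*,j}$ would settle the conjecture immediately by the multiplicity criterion of the first paragraph.
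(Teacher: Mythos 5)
This statement is a \emph{conjecture}: the paper does not prove it and neither do you, and your framing of the proposal as ``a program rather than a finished proof'' is appropriate. Your preliminary reductions are correct and match what the paper leaves implicit --- the identification $\RR[\OP_{W,k}]\otimes\det\cong\bigoplus_{J}\Ind_{W_J}^W(\det_{W_J})$ via the projection formula and the observation $\Res^W_{W_J}\det=\det_{W_J}$, the equivalence of a surjection with termwise multiplicity inequalities over a characteristic-$0$ field, the fact that Corollary~\ref{cor:fermionic-piece} settles type~A (with an isomorphism), and the reading of the $F_4$ data as showing the map cannot be injective in general.

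Where you diverge from the paper is in the suggested route. The paper's own closing speculation is neither of your two: it proposes, by analogy with Gordon's proof of the $R^{(2\mid0)}_W$ surjection~\eqref{eq:gordon-surjection}, that Conjecture~\ref{conj:surjection} ``may be best proven via an appropriate deformation of $S^{(1\mid1)}*W$'' in the spirit of the rational Cherednik algebra $\HHH_c$. Your first route (transport the parabolic $\DDD$-operator machinery of Section~\ref{sec:Parabolic} to $\varepsilon_{W_J}\cdot SH_W$) and your second (a grading-aware refinement of the Reeder model from Section~\ref{sec:Springer}) are genuinely different proposals; the second is closest to Corollary~\ref{cor:fiber} but, as you correctly flag, restriction to a single fiber $p^{-1}(t)$ destroys the $H^*(T)$-grading and so only recovers the ungraded statement.

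One structural point you underplay in the harmonic-operator route: in type~A the parabolic dimension data $\{\dim_\FF\varepsilon_\mu\cdot SR_n:\mu\vdash n\}$ determines the full character of $SR_n$ precisely because $\{e_\mu\}_{\mu\vdash n}$ is a \emph{basis} of $\Lambda_n$ (this is the force of Lemma~\ref{lem:module-antisymmetrization} together with nondegeneracy of the Hall pairing, as used in the proof of Theorem~\ref{thm:ungraded-fields}). For a general Weyl group the characters $\{\Ind_{W_J}^W\det_{W_J}:J\subseteq S\}$ associated to \emph{standard} parabolics do not span the character ring of $W$ (already in $B_2$ there are $5$ irreducibles but only $4$ standard parabolics), so even complete control of $\dim_\FF\varepsilon_{W_J}\cdot SH_W$ for all $J$ would neither determine the character of $SH_W$ nor, by itself, certify the multiplicity inequalities $[SH_W:\chi]\geq[\RR[\OP_W]\otimes\det:\chi]$. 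Your plan can still succeed by \emph{constructing} a $W$-equivariant injection $\bigoplus_J\Ind_{W_J}^W\det_{W_J}\hookrightarrow SH_W$ directly via generators $\DDD^\TT_{W_J}(\delta_W)$ with a leading-term argument in the style of Lemma~\ref{lem:parabolic-dimension-equality}, but then the issue is not merely the absence of a type-uniform Theorem~\ref{thm:colon-ideal-basis}: one also needs a type-uniform replacement for the ``distinct Gale-maximal fermionic monomial $\theta_{J(\mu,\gamma)}$'' trick that made the $W$-spans automatically independent in type~A. Naming that second missing ingredient would sharpen your assessment of the obstacle.
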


Corollary~\ref{cor:fermionic-piece} proves Conjecture~\ref{conj:surjection} in type A.
%Forthcoming work of Bhattacharya \cite{Bhattacharya} shows that the fermionic Hilbert series agrees with the reversed $f$-vector in types BC. 
%\andy{This doesn't actually imply the conjecture though, right? Maybe we could just leave the mention of Bhattacharya's work above and not mention it again here?} \brendon{Good catch!}
%However, in type F$_4$ one has $\Hilb(R^{(1 \mid 1)}_W;1,z) = 1152 + 2304 z + 1396 z^2 + 244 z^3 + z^4$ while the reversed $f$-vector of $\OP_W$ is $(1152,2304,1392,240,1)$. 
The type F$_4$ Weyl group $W$ has 25 irreducible characters; we give them the same order $\chi_1,\dots,\chi_{25}$ as in {\tt sage}. If $\chi$ is any character of $W(F_4)$, we obtain an expansion $\chi = a_1 \chi_1 + \cdots + a_{25} \chi_{25}$ for $a_i \geq 0$. The following table gives the sequences $(a_1,\dots,a_{25})$ for $k = 1,2,3$ with   $R^{(1|1)}_W$ on top and $\RR[\OP_W] \otimes \det$ on the bottom.
%\andy{I prefer this left-aligned, but feel free to change it back to center-aligned.} \brendon{Yes, this looks better.}
\begin{center}
\begin{tabular}{c | l}
$k$ & \text{characters of $(R_{W(F_4)}^{(1|1)})_{*,n-k}$ and $\RR[\OP_{W(F_4),k}] \otimes \det$} \\ \hline
1 & $
\begin{array}{l}
    (0, 4, 0, 0, 2, 0, 2, 0, 0, 1, 1, 4, {\bf 1}, 0, 2, 0, 4, 0, 4, 6, 1, 1, 0, 2, 2)\\
    (0, 4, 0, 0, 2, 0, 2, 0, 0, 1, 1, 4, {\bf 0}, 0, 2, 0, 4, 0, 4, 6, 1, 1, 0, 2, 2)
\end{array}$ \\ \hline
2 & $\begin{array}{l}
    (0, 6, 1, 1, 6, 0, 6, 0, 0, 5, 5, 12, {\bf 5}, 6, 8, 4, 16, 4, 16, 21, 10, 10, 3, 14, 18)\\
    (0, 6, 1, 1, 6, 0, 6, 0, 0, 5, 5, 12, {\bf 4}, 6, 8, 4, 16, 4, 16, 21, 10, 10, 3, 14, 18)
\end{array}$ \\ \hline
3 & $\begin{array}{l}
    (0, 4, 2, 2, 6, 2, 6, 2, 4, 8, 8, 12, 8, 12, 12, 12, 20, 12, 20, 24, 18, 18, 12, 24, 32)\\
    (0, 4, 2, 2, 6, 2, 6, 2, 4, 8, 8, 12, 8, 12, 12, 12, 20, 12, 20, 24, 18, 18, 12, 24, 32)
\end{array} $
\end{tabular}
\end{center}
The cases $k = 1,2$ disagree by one copy of the same  {\bf bolded} unlucky degree 4 character $\chi_{13}$ with $(R^{(1|1)}_W)_{*,n-k}$ being larger in both cases. We have $(R^{(1|1)}_W)_{*,4} \cong \RR[\OP_{W,0}] \otimes \det \cong \det$.
%When $k =3$ the characters coincide as guaranteed by Chevalley's result  $R^{(1 \mid 0)}_W \cong \RR[W]$ in all types \cite{Chevalley}.
%\andy{Is it worth mentioning the det character case for completeness?} \brendon{Good idea.}

We compare Conjecture~\ref{conj:surjection} with other results on $R^{(k \mid j)}_W$ for an irreducible reflection group $W$.
\begin{itemize}
    \item As in the discussion before Theorem~\ref{thm:reeder}, one has $R^{(0 \mid 1)}_W = \wedge V$ for all $W$.
    \item Chevalley proved \cite{Chevalley} that $R^{(1 \mid 0)}_W$ carries the regular representation $\RR[W]$ for all $W$.
    \item Let $W$ be a Weyl group with Coxeter number $h$ and root lattice $Q$. The {\em finite torus} $Q/(h+1)Q$  carries an action of $W$. Gordon established \cite{Gordon} a $W$-equivariant surjection
    \begin{equation}
    \label{eq:gordon-surjection}
    R^{(2 \mid 0)}_W \twoheadrightarrow \RR[Q/(h+1)Q] \otimes \det.\end{equation}
    By Haiman's work \cite{Haiman}, the surjection \eqref{eq:gordon-surjection} is an isomorphism in type A. It is not an isomorphism in the other classical types BCD.
    \item For all $W$, Kim--Rhoades proved \cite{KR} 
    \begin{equation}
        \label{eq:kim-rhoades}
        [(R^{(0 \mid 2)}_W)_{i,j}] = [ \wedge^i V] \cdot [\wedge^j V]  - [\wedge^{i-1} V] \cdot [\wedge^{j-1} V] \quad \quad \text{for } i+j \leq \dim_\FF V
    \end{equation}
   in the Grothendieck ring of $W$. If $i + j > \dim_\FF V$ one has $(R^{(0 \mid 2)}_W)_{i,j} = 0$.
\end{itemize}
%Lentfer calculated \cite{Lentfer2} the triply-graded sign isotypic Hilbert series of $R^{(0\mid 3)}_W$ in type A. 
Examining these cases, the combinatorial approximations of $R^{(k\mid j)}_W$ become looser and less uniform as $k,j$ grow. The behavior of $R^{(2 \mid 0)}_W, R^{(1 \mid 1)}_W,$ and $R^{(0\mid2)}_W$ suggests that fermionic sets of variables may admit easier and more faithful combinatorial models than their bosonic counterparts.
Gordon's result \cite{Gordon} was proven by indirect means using the {\em rational Cherednik algebra} $\HHH_c$ attached to $W$. This is a filtered deformation of the smash product $S^{(2 \mid 0)} * W$ which depends on a parameter $c$. The associated graded algebra of $\HHH_c$ recovers $S^{(2 \mid 0)} * W$. Conjecture~\ref{conj:surjection} may be best proven via an appropriate deformation of $S^{(1 \mid 1)} * W$.

\section*{Acknowledgements}

The authors are grateful to Tom Braden, Vic Reiner, and Minh-Tam Trinh for very helpful conversations. S.\ Murai was partially supported by KAKENHI 25K06943 and 23K17298.
B.\ Rhoades was partially supported by NSF Grant DMS-2246846. A.\ Wilson was partially supported by AMS-Simons PUI Grant 434651.

\end{document}